\definecolor{shadecolor}{gray}{0.9}
\tikzset{
        ->,  
        node distance=5.5cm, 
        every state/.style={thick, fill=gray!10}, 
        initial text=$ $, 
        }
\theoremstyle{plain}  
\newtheorem{theorem}{Theorem}[section] 
\newtheorem{proposition}[theorem]{Proposition}
\theoremstyle{definition} 
\newtheorem{definition}[theorem]{Definition}
\newtheorem{rem}[theorem]{Remark}
\newtheoremstyle{assumption}
{3pt}
{3pt}
{}
{}
{\bf}
{.}
{.5em}
{\thmname{#1} (\thmnote{#3}\thmnumber{#2})}
\theoremstyle{assumption}
\newtheorem{ass}{Assumption}
\newtheorem{RunningExmp}{Running Example}
\theoremstyle{remark}
\newcommand{\dint}{\,\mathrm{d}}
\newcommand{\E}{\mathbb{E}}
\newcommand{\ES}{\operatorname{ES}}
\newcommand{\Var}{\operatorname{Var}}
\newcommand{\F}{\mathcal{F}}
\newcommand{\R}{\mathbb{R}}
\newcommand{\A}{\mathbb{A}}
\newcommand{\one}{\mathds{1}}
\newcommand{\interior}{\operatorname{int}}
\newcommand{\ph}{\varphi}
\DeclareMathOperator*{\argmin}{arg\,min}
\newcommand{\N}{\mathbb N}
\renewcommand{\P}{\mathbb P}
\renewcommand{\a}{\alpha}
\newcommand{\X}{\mathcal{X}}
\newcommand{\Y}{\mathcal{Y}}
\newcommand{\Z}{\mathcal{Z}}
\newcommand{\tod}{\overset{d}{\longrightarrow}}
\renewcommand{\rm}{\normalfont \rmfamily}
\renewcommand{\bf}{\normalfont \bfseries}
\newcommand{\as}{\text{a.s.}}
\def\be{\begin{equation} \label}
\def\ee{\end{equation}}
\numberwithin{equation}{section} 
\newcommand{\Comments}{1}
\newcommand{\mynote}[2]{\ifnum\Comments=1\textcolor{#1}{#2}\fi}
\newcommand{\mytodo}[2]{\ifnum\Comments=1%
  \todo[linecolor=#1!80!black,backgroundcolor=#1,bordercolor=#1!80!black]{#2}\fi}
\begin{document}

\title{The Efficiency Gap\footnote{
		We are very grateful to Jana Hlavinov\'a for a careful proofreading and valuable feedback on an earlier version of this paper and thank Andrew Patton and Sander Barendse for a fruitful discussion of the results of the paper.
		T.~Dimitriadis gratefully acknowledges support of the German Research Foundation (DFG) through grant number 502572912, of the Heidelberg Academy of Sciences and Humanities and the Klaus Tschira Foundation. J.~Ziegel gratefully acknowledges support of the Swiss National Science Foundation.
		}
	}
\author{Timo Dimitriadis\thanks{Heidelberg University, Alfred Weber Institute of Economics, Bergheimer Str.\ 58, 69115 Heidelberg, Germany and
		Heidelberg Institute for Theoretical Studies, 69118 Heidelberg, Germany, e-mail: \href{mailto:timo.dimitriadis@h-its.org}{timo.dimitriadis@awi.uni-heidelberg.de}}
	\and Tobias Fissler\thanks{Vienna University of Economics and Business (WU), Department of Finance, Accounting and Statistics, Welthandelsplatz 1, 1020 Vienna, Austria, 
		e-mail: \href{mailto:tobias.fissler@wu.ac.at}{tobias.fissler@wu.ac.at}} \and Johanna Ziegel\thanks{University of Bern, Department of Mathematics and Statistics, Institute of Mathematical Statistics and Actuarial Science, Alpeneggstrasse 22, 3012 Bern, Switzerland, 
		e-mail: \href{mailto:johanna.ziegel@stat.unibe.ch}{johanna.ziegel@stat.unibe.ch}}
}
\maketitle

\begin{abstract}
\noindent
\textbf{Abstract.}
Parameter estimation via M- and Z-estimation is equally powerful in semiparametric models for one-dimensional functionals due to a one-to-one relation between corresponding loss and identification functions via integration and differentiation. 
For multivariate functionals such as multiple moments, quantiles, or the pair (Value at Risk, Expected Shortfall), this one-to-one relation fails and not every identification function possesses an antiderivative. 
The most important implication is an \emph{efficiency gap}: The most efficient Z-estimator often outperforms the most efficient M-estimator. 
We theoretically establish this phenomenon for multiple quantiles at different levels and for the pair (Value at Risk, Expected Shortfall), and illustrate the gap numerically.
Our results further give guidance for pseudo-efficient M-estimation for semiparametric models of the Value at Risk and Expected Shortfall.
\end{abstract}
\noindent
\textit{Keywords:}
Efficient semiparametric estimation; Expected Shortfall; M-estimation; Quantiles; Loss functions \\
\noindent
\textit{JEL Codes:}  C14, C22, C32, C51, C58, G32

\section{Introduction}
\label{sec:intro}

\onehalfspacing

Given some real-valued response variable $Y_t$ and some $p$-dimensional vector of covariates $X_t$, one is often interested in modelling the effect of the covariates on the response variable through regression models.
E.g., one might be interested in the average effect of economic and financial conditions as e.g.\ inflation on GDP growth.
The classical mean regression technique captures the \textit{average} effect by modelling the \textit{expectation} of the conditional distribution of $Y_t$ given $X_t$, denoted by $F_{t}$.
However, researchers are often interested in different properties of this conditional distribution, e.g., in low quantiles if attention is focused on downside risks of GDP growth as in \citet{Adrian2019}.
This can be facilitated through quantile regression \citep{Koenker1978}, where one parametrically models the quantile of the conditional distribution $F_{t}$.

More generally, one is interested in a certain statistical \textit{functional} $\Gamma$ of the conditional distribution $F_{t}$, where the functional maps a (conditional) distribution to a real-valued outcome.
The functional of interest varies among disciplines: E.g., quantitative risk managers are specifically interested in models for risk measures such as conditional variances (volatility), quantiles (Value at Risk, VaR), expectiles and Expected Shortfall (ES) \citep{Bollerslev1986, Engle2004, Efron1991, Patton2019}.
Epidemiological forecasts, of particular importance due to the COVID-19 pandemic, often focus on prediction intervals, which commonly consist of two quantiles \citep{Bracher2021NatComm, Cramer2022}. 

It is common practice to model the functional as some parametric model  $\Gamma(F_{t}) = m(X_t,\theta_0)$ for some unique parameter $\theta_0 \in \Theta \subseteq \R^q$.
This specification is commonly referred to as \emph{semiparametric}: 
Even though the model $m$ itself is parametric, it does not specify the full conditional distribution $F_{t}$, but only a functional $\Gamma$ thereof \citep{Newey1990, BKRW1998book}.

While standard approaches often model every functional of interest separately, joint semiparametric models for \textit{multivariate} (or vector-valued) functionals have desirable advantages in many instances:
A joint treatment of two quantile levels is e.g.\ beneficial for prediction intervals \citep{Shrestha2006, Bracher2021NatComm}, it can impede quantile crossings \citep{GourierouxJasiak2008, WhiteKimManganelli2015, Catania2019}, and it generally improves efficiency.
More fundamentally, there are cases where  M- or Z-estimation of  univariate models is infeasible such as for the variance, ES and Range Value at Risk (RVaR, also called ``interquantile expectation'', which nests the trimmed mean), since suitable loss or identification functions for these functionals do not exist; see \cite{Osband1985}, \cite{Weber2006}, \cite{WangWei2020}, \cite{DFZ_CharMest}.
However, such objective functions exist for an appropriate multivariate functional; see \cite{FisslerZiegel2016} for the pair (VaR, ES), \cite{Osband1985} for the pair (mean, variance), and \cite{FisslerZiegel2019} for the triplet of the RVaR with two quantiles.
These examples motivate our consideration of joint estimation of multivariate models.

Estimation of the parameter $\theta_0$ in semiparametric models is regularly carried out by either minimum (M-) or zero (Z-) estimation \citep{NeweyMcFadden1994}.
Given these estimators are consistent and asymptotically normal, one favors an \textit{efficient} estimator with an associated covariance matrix which is as small as possible.
Besides more accurate estimates, this allows for more powerful inference through tests and confidence intervals.

In this article, we investigate the efficiency of M-estimators, based on some \emph{loss functions}, in particular in relation to Z-estimators, which are based on \emph{identification functions} (or moment conditions).
We show the existence of an ``efficiency gap'' for multivariate functionals in the sense that the semiparametric M-estimator cannot attain the Z-estimation or semiparametric efficiency bound in the sense of \cite{Stein1956}.
For this, we make use of a recent result of \citet{DFZ_CharMest} that fully characterizes the class of consistent, semiparametric M-estimators for general functionals through the classes of strictly consistent loss functions from the literature on forecast evaluation \citep{Gneiting2011, FisslerZiegel2016}.
For vector-valued functionals, theses latter classes are considerably smaller than the corresponding classes of  identification functions used in Z-estimation.
This is in stark contrast to the univariate case, where these classes are almost equivalent and M- and Z-estimation can be equally efficient.
As a stepping stone, we derive the novel result that the ``optimal instrument matrix" of \cite{Chamberlain1987} and \cite{Newey1993} is not only a sufficient, but also a necessary condition for efficient Z-estimation.

Throughout the article, we recurrently make use of the running example of a double quantile model---i.e., a semiparametric model for two quantiles at different levels---to illustrate and exemplify our general theoretical results.
In particular, we derive conditions for the occurrence of the efficiency gap and illustrate these in simulations.
Our results directly generalize to finitely many quantiles.
This model class arises naturally in the following fields of applications:
In quantitative risk management, one is interested in quantiles (VaR) of financial returns at two small probability levels, say 1\% and 2.5\%, which directly motivates the joint modelling of two quantiles \citep{Engle2004, WhiteKimManganelli2015, Catania2019}.
Furthermore, prediction intervals can naturally be defined as the interval spanned by two (conditional) quantiles with levels of e.g., $5\%$ and $95\%$ \citep{BrehmerGneiting2020, FisslerHlavinovaRudloff2019Theory, Bracher2021NatComm}.
Eventually, the entire conditional distribution can conveniently be approximated by multiple conditional quantiles; see e.g., \cite{Buchinsky1994}, \cite{Angrist2006}, \cite{Chernozhukov2010} for microeconomic and \cite{Adrian2019} for macroeconomic applications.
While models for individual quantile levels could be estimated separately, an important methodological demand on reasonable models is to impede quantile crossings \citep{Koenker2005}, which can be achieved through joint models as in \cite{GourierouxJasiak2008}, \cite{WhiteKimManganelli2015} and \cite{Catania2019}.
Moreover, joint estimation generally improves efficiency.

We further illustrate that the efficiency gap arises for the popular and recently proposed joint models for the VaR and ES \citep{Patton2019}.  
While the as yet common choices of loss functions used for their M-estimation are rather ad hoc, we provide two novel \textit{pseudo-efficient} loss functions, that is, choices which result in efficient M-estimation at least in specific (but realistic) situations.
We illustrate their superiority in simulations, especially for small probability levels that are of particular importance in risk management.
The first pseudo-efficient choice is ``surprisingly feasible'' in the sense that it requires very little pre-estimates compared to classical semiparametric models for the mean or quantiles.
This finding suggests an improved and practically relevant M-estimator for semiparametric VaR and ES models.
We anticipate that the efficiency gap generalizes to joint models for various other vector-valued functionals like multiple expectiles or the RVaR, jointly with corresponding quantiles.

The paper is organized as follows.
Section \ref{sec:Notation and Setting} formally introduces M- and Z-estimation and relates these to the literature on forecast evaluation that we quickly review in Section \ref{sec:LossIdFunctions}.
Section \ref{sec:EfficientEstimation} considers efficient M- and Z-estimation of general semiparametric models and attainability of the semiparametric efficiency bound.
In Section \ref{sec:ExamplesRegressionFrameworks}, we 
establish the efficiency gap for double quantile models and models for (VaR, ES), which is illustrated in simulations in Section \ref{sec:Simulation}.
The Supplementary Material contains all proofs in Section \ref{app:Proofs}, analyzes efficient estimation of the pair (mean, variance) in Section \ref{sec:DoubleMomentRegressionModel}, discusses the impact of the gap on equivariant estimation in Section \ref{sec:implications}, and contains further technical details in its subsequent sections.

\section{M- and Z-estimation}
\label{sec:Notation and Setting}

We consider a time series $Z_t = (Y_t,X_t)$, $t\in\N$, where $Y_t$ are real-valued response variables and $X_t$ are $\R^p$-valued regressors, that can potentially contain lagged values of $Y_t$, allowing for autoregressive models.
Let $\F_\Z$ be a class of possible joint distributions of $Z_t$ that formalizes the uncertainty about the distribution of our time series.
$\F_\Z$  induces a class $\F_\X$ of marginal distributions of $X_t$ and a class $\F_{\Y|\X}$ of conditional distributions, $F_t$, of $Y_t$ given $X_t$.
Whenever they exist, we denote the conditional density by $f_t$, the conditional expectation by $\E_t[\cdot] = \E[\cdot  \,|\, X_t]$ and the conditional variance by $\Var_t(\cdot) = \Var(\cdot  \,|\, X_t)$.
Equalities of random variables are meant to hold almost surely if not stated otherwise.

Let $\Gamma \colon\F_{\Y|\X}\to\Xi\subseteq\R^k$ be some $k$-dimensional and measurable functional of the conditional distributions $F_t$.
Standard examples for univariate functionals are the mean or quantiles.
Later on, we consider a pair of two quantiles and the pair consisting of the VaR and ES as examples for multivariate functionals.
Let $\Theta\subseteq \R^q$ be a parameter space with non-empty interior, $\interior(\Theta)$, and $m\colon \R^p\times \Theta\to \Xi$ a parametric and (in $\theta$) differentiable model for the functional $\Gamma$.
We denote the gradient of its $j$-th component by the column vector $\nabla_{\theta} m_j(X_t,\theta)\in\R^q$, $j= 1,\dots,k$.
We work under the following assumption of a correctly specified model with a unique parameter. 
\begin{ass}\label{ass:unique model}
	For all distributions $F_{Z_t} \in \F_\Z$ of $Z_t=(Y_t, X_t)$, there is a unique and time-independent parameter $\theta_0 = \theta_0(F_{Z_t}) \in \interior(\Theta)$ such that $m(X_t,\theta_0) = \Gamma(F_t)$ for all $t\in\N$.
\end{ass}

We dispense with a strong stationarity assumption on the time series $Z_t$, however, Assumption \eqref{ass:unique model} imposes a \emph{semiparametric stationarity} assumption in that the parameter $\theta_0$, and hence the functional $\Gamma(F_t)$ is time-independent, allowing e.g., for heteroskedasticity.

Following \citet{Huber1967} and \cite{NeweyMcFadden1994}, we consider M-estimators for $\theta_0$
\be{eq:M-est}
	\widehat \theta_{M,T} = \argmin_{\theta \in\Theta} \frac{1}{T} \sum_{t=1}^T \rho_t \big(Y_t , m(X_t,\theta)\big),
\ee
based on possibly time-varying loss functions $\rho_t$, which are the key ingredient of the M-estimator.
The core condition on $\rho_t$ for the consistency of $\widehat \theta_{M,T}$ is that 
\begin{align}
	\label{eq:model cons}
	\E\big[\rho_t \big(Y_t , m(X_t,\theta_0)\big)\big]<\E\big[\rho_t \big(Y_t , m(X_t,\theta)\big)\big]\qquad \text{for all } \theta\neq\theta_0,\quad \text{for all } t\in\N,
\end{align}
which we call \emph{strict $\F_{\Z}$-model-consistency} of $\rho_t$ for $m$ as in \citet{DFZ_CharMest}; also see \citet[Properties 3.3 and 3.4]{Gourieroux1987}.

A standard alternative to M-estimation are zero (Z-) or method of moments (MM-) estimators \citep{Hansen1982, NeweyMcFadden1994}, given by
\begin{align}
	\label{eq:Z-est}
	\widehat \theta_{Z,T} = \argmin_{\theta \in\Theta} \Big\|\frac{1}{T} \sum_{t=1}^T 
	\psi_t (Y_t , X_t,\theta)
	\Big\|^2.
\end{align}
The name arises since the minimization in \eqref{eq:Z-est} essentially sets the average of the $q$-dimensional, possibly time varying functions $\psi_t$ to zero.
Hence, consistency of the Z-estimator crucially relies on the \emph{strict unconditional $\F_{\Z}$-identification} condition
\begin{align}
	\label{eq:model ind}
	\Big(\E\big[\psi_t (Y_t , X_t,\theta)\big]=0 
	\quad \Longleftrightarrow \quad \theta=\theta_0\Big) \qquad \text{for all } \theta\in\Theta, \quad \text{for all } t\in\N.
\end{align}
The functions $\psi_t$ in \eqref{eq:model ind} are often the gradients of the losses $\rho_t$ in \eqref{eq:model cons}.
We do not consider the standard extension to generalized method of moments (GMM) estimation, where $\psi_t$ can be of larger dimension than $\theta$, as the exactly identified case in \eqref{eq:Z-est} suffices for efficient estimation; see Theorem \ref{theorem:GeneralEfficientA}  and Remark  \ref{rem:GMM} for details.

For semiparametric estimation, there exist a multitude of choices for the functions $\rho_t$ and $\psi_t$ that satisfy the conditions \eqref{eq:model cons} and \eqref{eq:model ind} respectively \citep{GourierouxMonfortTrognon1984, Komunjer2005}.
This opens up the possibilities to optimally choose $\rho_t$ and $\psi_t$, e.g., for efficient estimation \citep{Newey1993}.
To characterize such bounds, it is essential to characterize the entire classes of functions $\rho_t$ and $\psi_t$ such that \eqref{eq:model cons} and \eqref{eq:model ind} hold.
For this, \citet{DFZ_CharMest} formally connect these conditions to the notions of strictly consistent loss and strict identification functions from the literature on forecast evaluation \citep{Gneiting2011}, which we shortly review in the following.

\section{Strictly consistent loss and strict identification functions}
\label{sec:LossIdFunctions}

Throughout this section, let $Y \sim F \in \F$ be a real-valued random variable, where $\F$ is a generic class of probability distributions on $\R$.
We consider the single-valued functional $\Gamma \colon \F \to\Xi$ that attains values in the $k$-dimensional \emph{action domain} $\Xi\subseteq \R^k$.
A map $a\colon\R\times \Xi \to\R^\ell$, $\ell \in \mathbb{N}$, is called \emph{$\F$-integrable} if $\mathbb{E}[a(Y, \xi)]$ exists and is finite for all $Y \sim F\in\F$ and for all $\xi\in\Xi$.

\begin{definition}[Consistency and elicitability]\label{defn:consistency}
	An $\mathcal{F}$-integrable map $\rho\colon \R\times \Xi\to\R$ is an \emph{$\F$-consistent} loss function for a functional $\Gamma\colon \F\to\Xi$ if
	\be{eq:strict consistency}
		\E \big[ \rho \big(Y,\Gamma(F)\big)\big] \le  \E \big[ \rho (Y, \xi) \big] \qquad \text{for all } Y \sim F\in\F, \ \text{for all }\xi\in\Xi\,.
	\ee
	If equality in \eqref{eq:strict consistency} implies that $\xi = \Gamma(F)$, then $\rho$ is called \emph{strictly $\F$-consistent} for $\Gamma$.
	A functional $\Gamma$ is \emph{elicitable on $\F$} if there is a strictly $\F$-consistent loss function for it.
\end{definition}

The crucial difference to unconditional model consistency in  \eqref{eq:model cons} is that in \eqref{eq:strict consistency}, the expectation is only taken with respect to $Y$.
The whole classes of (strictly) consistent losses are characterized for many functionals \citep{Gneiting2011, FisslerZiegel2016}.
E.g., under richness conditions on the class $\F$, one can show that $\rho$ is (strictly) $\F$-consistent for the mean functional if and only if it is a \emph{Bregman loss}  $\rho(y,\xi) = \phi(y)- \phi(\xi) +\phi'(\xi)(\xi-y) + \kappa(y)$ where $\phi$ is a (strictly) convex function on $\R$ with subgradient $\phi'$, and the function $\kappa\colon\R\to\R$ is such that $\F$-integrability holds \citep{Savage1971, Gneiting2011}.
This class nests the omnipresent squared loss $\rho(y,\xi) = (y-\xi)^2$.
Likewise, under similar richness conditions and if $\F$ contains only distributions with a unique $\alpha$-quantile, a loss is strictly $\F$-consistent for the $\alpha$-quantile with $\alpha \in (0,1)$, if and only if $\rho$ is a generalized piecewise linear loss functions $\rho(y,\xi) = (\one_{\{y\le \xi\}} - \alpha )(g(\xi)-g(y)) + \kappa(y)$, where $g$ is (strictly) increasing, and $\kappa$ a function ensuring $\F$-integrability \citep{Gneiting2011b}.
This class nests the well known pinball loss $\rho(y,\xi) = (\one_{\{y\le \xi\}} - \alpha)(\xi - y)$.
The following running example is used illustratively throughout the paper.

\begin{RunningExmp} 
	\label{RunningExmp1}
	Consider the double quantile $\Gamma(\cdot) = \big(Q_\alpha(\cdot), Q_\beta(\cdot)\big)$ at probability levels $0<\alpha<\beta<1$ and a class $\F$ of strictly increasing distribution functions fulfilling the richness Assumption \eqref{ass:GPLclass} in Appendix \ref{sec:AddAssumptions}. 
	\citet[Proposition 4.2]{FisslerZiegel2016} characterizes the class of (strictly) $\F$-consistent losses $\rho\colon\R\times\Xi\to\R$, $\Xi\subseteq \R^2$, for $\Gamma$ as
	\begin{align}
	\begin{aligned}
	\label{eqn:DoubleQuantileGeneralLossFunctions}
	\rho(y, \xi_1, \xi_2) = &\big( \mathds{1}_{\{y \le \xi_1\}} -\alpha \big) \big(g_{1}(\xi_1) -  g_{1}(y) \big) \\
	+ \, &\big(\mathds{1}_{\{y \le \xi_2\}} - \beta \big) \big(g_{2}(\xi_2) - g_{2}(y) \big) + \kappa(y),
	\end{aligned}
	\end{align}
	where $g_{1}, g_2: \mathbb{R} \to \mathbb{R}$ are (strictly) increasing, and $\kappa\colon\R\to\R$ is such that $\rho$ is  $\F$-integrable. 
	Strikingly, this means that the whole class of (strictly) consistent losses for the double quantile coincides with the sum of (strictly) consistent losses for the individual quantiles.
\end{RunningExmp}

In forecast evaluation, identification functions are used to check (conditional) calibration of forecasts \citep{NoldeZiegel2017, DimiPattonSchmidt2019}, akin to goodness-of-fit tests.

\begin{definition}[Identification function and identifiability]\label{defn:identifiability}
	An $\mathcal{F}$-integrable map $\ph\colon \R \times \Xi\to\R^k$ is an \emph{$\F$-identification function} for a functional $\Gamma\colon \F\to\Xi \subseteq \R^k$ if 
	\(
	\mathbb{E}\big[ \ph\big(Y,\Gamma(F)\big) \big] =0
	\)
	for all $Y \sim F \in \F$.
	 If additionally
	$
	\mathbb{E} \big[ \ph(Y,\xi) \big] =0$  implies that  $\xi = \Gamma(F)
	$
	for all $F\in\F$ and for all $\xi\in\Xi$, it is a \emph{strict $\F$-identification function} for $\Gamma$.
	A functional $\Gamma$ is called \emph{identifiable on $\F$} if there is a strict $\F$-identification function for it.
\end{definition}


Given a strict $\F$-identification function $\ph\colon\R\times\Xi\to\R^k$ for a functional $\Gamma\colon\F\to\Xi\subseteq \R^k$, \citet[Theorem 4]{DFZ_OsbandID} shows that under some regularity conditions and richness conditions on $\F$, the full class of strict $\F$-identification functions for $\Gamma$ is given by
\begin{align}
	\label{eq:all identification functions}
	\big\{ h(\xi)\ph(y,\xi)\,|\, h\colon \Xi \to\R^{k\times k}, \ \det (h(\xi))\neq0 \ \text{for all }\xi\in\Xi \big\}.
\end{align}
This characterization result is valid for any identifiable functional.
In contrast, there is no such general characterization result available for the class of strictly consistent loss functions for a given elicitable functional. They need to be established on a case-by-case basis.


\begin{RunningExmp}
	\label{RunningExmp3}
	Let $\F$ be the class of continuous and strictly increasing distribution functions. The double quantile functional possesses a strict $\F$-identification function $\ph(y,\xi_1,\xi_2) = \big( \mathds{1}_{\{y \le \xi_1 \}} - \alpha, \; \mathds{1}_{\{y \le \xi_2  \}} - \beta \big)^\intercal$.
	Equation \eqref{eq:all identification functions} provides a rich family of further strict $\F$-identification functions, e.g., choosing 
	$h(\xi_1,\xi_2)=
	\big(\begin{smallmatrix}
		1 & 1\\
		0 & 1
	\end{smallmatrix}\big)$ 
	leads to $\ph'(y,\xi_1,\xi_2) = h(\xi_1,\xi_2) \ph(y,\xi_1,\xi_2)
	= \big( \mathds{1}_{\{y \le \xi_1 \}} - \alpha + \mathds{1}_{\{y \le \xi_2  \}} - \beta, \; \mathds{1}_{\{y \le \xi_2  \}} - \beta \big)^\intercal$.
\end{RunningExmp}

There is an intimate relationship between (strictly) consistent loss functions and strict identification functions for $\Gamma$ via differentiation and integration. 
For \emph{one-dimensional} functionals $\Gamma$, these two classes are essentially equivalent:
On the one hand, under sufficient smoothness and regularity conditions, first-order conditions yield that the derivative of any (strictly) consistent loss for $\Gamma$ is an identification function, whose strictness however requires some additional care.
On the other hand, Osband's principle \citep{Osband1985, Gneiting2011} implies that---under sufficient regularity conditions---if $\ph$ is an oriented identification function for $\Gamma$, then for any consistent loss $\rho$ there is a real-valued function $h$ such that 
\begin{align}
	\label{eq:Osband}
	\nabla_\xi \mathbb{E} \big[ \rho(Y,\xi) \big] = h(\xi) \mathbb{E} \big[  \ph(Y,\xi) \big]
	\qquad \text{ for all } \xi\in\Xi \quad \text{and}  \quad Y \sim F \in \F. 
\end{align}

The relation  between loss and identification functions is more involved for \emph{multivariate} functionals $\Gamma$, and it turns out that \emph{there are considerably more identification functions than consistent losses}.
This disparity proves to be consequential for efficient estimation of semiparametric models for vector-valued functionals, as discussed in the subsequent sections of this article.

In more detail, the gradient of any (strictly) consistent loss is still a (multivariate) identification function for $\Gamma$.
For the reverse direction, \eqref{eq:Osband} holds equivalently with $h$ being $(k\times k)$-matrix valued. However, $h(\xi) \mathbb{E} [ \ph(Y,\xi) ]$ can only have an antiderivative if the Hessian $\nabla_\xi^2 \mathbb{E} [ \rho(Y,\xi) ]$ is
symmetric; see \citet[Corollary 3.3]{FisslerZiegel2016} for a rigorous statement.
This result imposes strong conditions on $h$ as illustrated with the following running example.

\begin{RunningExmp}
	\label{exmp:illustration osband}
	\citet[Proposition 4.2(i)]{FisslerZiegel2016} yields that the derivative of any expected (strictly) $\F$-consistent loss function for the double quantile takes the form $h(\xi_1,\xi_2) \mathbb{E} \big[ \ph(Y,\xi_1,\xi_2) \big]$ where 
	$h(\xi_1,\xi_2) = \text{diag}\big(w_1(\xi_1), w_2 (\xi_2)\big)$ and $w_1$, $w_2$ are non-negative, subject to the richness Assumption \eqref{ass:GPLclass}.
	This constitutes the argument for the characterization of all (strictly) consistent loss functions in \eqref{eqn:DoubleQuantileGeneralLossFunctions} where clearly $w_j = g_j'$, $j=1,2$.
	On the other hand, there is evidently a considerably larger class of $\R^{2\times 2}$-valued functions $h$ such that $\det(h(\xi_1, \xi_2))\neq 0$ for all $(\xi_1, \xi_2)\in\Xi$.
	E.g., $\ph'$ in Running Example \eqref{RunningExmp3} cannot arise as the derivative of a strictly consistent loss for the double quantile functional as the corresponding 
	$h=\big(\begin{smallmatrix}
		1 & 1\\
		0 & 1
	\end{smallmatrix}\big)$  
	is not diagonal.
\end{RunningExmp}

We refer to Supplement Section \ref{sec:Details} for further remarks and technical details on the connection between loss and identification functions.


\section{Efficient Semiparametric Estimation}
\label{sec:EfficientEstimation}

Recall the M-estimator at \eqref{eq:M-est}, where the losses $\rho_t$ have to satisfy \eqref{eq:model cons}, which closely resembles the notion of strict consistency in Section \ref{sec:LossIdFunctions}.
\citet[Theorem 1]{DFZ_CharMest} shows that these two conditions are equivalent under Assumptions \eqref{ass:unique model} and \eqref{ass:CharMest}, i.e., a semiparametric M-estimator is consistent if and only if a (strictly) consistent loss function is used.

\begin{RunningExmp}
	Let $m(X_t,\theta) = \big( q_\alpha(X_t,\theta),  q_\beta(X_t,\theta) \big)^\intercal$ be some semiparametric model for the double quantile functional where $\theta \in \Theta \subseteq \mathbb{R}^q$.
	Then, \citet[Theorem 1] {DFZ_CharMest} yields that under our Assumptions \eqref{ass:unique model} and \eqref{ass:CharMest}, a loss $\rho\colon \R\times\Xi\to\R$, $\Xi\subseteq \R^2$, is $\F_\Z$-model-consistent for $m$ if and only if $\rho$ is of the form given in \eqref{eqn:DoubleQuantileGeneralLossFunctions}.
	This implies that the M-estimator for the double quantile model can only be consistent if $\rho$ is of the form given in \eqref{eqn:DoubleQuantileGeneralLossFunctions}.
\end{RunningExmp}

Such characterization results for the full class of consistent M-estimators allow to determine an asymptotically most efficient M-estimator.
To this end, it is helpful to relate the asymptotic distributions of M- and Z-estimators, which coincide if the identification functions $\psi_t$ of the latter match the derivative with respect to $\theta$ of the loss $\rho_t$ of the former; see e.g., Theorems 3.1, 3.2, and the discussion on p.\ 2145 in \cite{NeweyMcFadden1994} for details.
For non-differentiable losses, this rationale holds on the level of the differentiable conditional expectations \citep[Theorems 7.1 und 7.2]{NeweyMcFadden1994}.
Consequently, in the sequel we say that an M-estimator has an equivalent Z-estimator if the derivative of (the conditional expectation of) the loss function with respect to $\theta$ equals (the conditional expectation of) the identification function almost surely.
Also notice that the asymptotic covariance of M-estimators is invariant to rescaling by constants $c$ and additions of terms $\kappa_t(Y_t)$ with the consequence that we dispense with a discussion of these terms in the sequel.

Following \cite{Chamberlain1987}, \cite{Gourieroux1987}, \cite{Newey1990} among many others, we consider functions $\psi_t$ in (\ref{eq:Z-est}) based on \emph{conditional moment conditions} of the form
\begin{align}
	\label{eqn:UnconditionalIdentificationGeneralForm}
	\psi_{t} \big(Y_t, X_t, \theta \big) = A_t(X_t, \theta) \, \varphi \big(Y_t, m(X_t,\theta) \big),
\end{align}
where $\ph$ is a strict identification function for the functional $\Gamma$ and the $q \times k$ matrices $A_t(X_t, \theta)$ are often called instrument matrices.
We denote their sequence by $\A = (A_t)_{t\in\N}$ and the resulting Z-estimator at \eqref{eq:Z-est} by $\widehat \theta_{Z,T,\A}$.
This restriction is justified by three reasons:
First, moment conditions of the form \eqref{eqn:UnconditionalIdentificationGeneralForm} generally suffice to reach the semiparametric efficiency bound \citep{Chamberlain1987}.
Second, the derivatives of strictly consistent loss functions take that form, where $A_t(X_t, \theta)$ matches the model gradient.
Third, despite the convenient result \eqref{eq:all identification functions}, a characterization of all consistent Z-estimators in terms of their functions $\psi_t$ is not available; see e.g., \citet{Roehrig1988}, \citet{Komunjer2012} and the supplement of \citet{DFZ_CharMest}.

Henceforth, we assume that the considered M- and Z-estimators are consistent and asymptotically normal.
Primitive conditions for this are widely available, see e.g., \cite{Huber1967}, \cite{Weiss1991}, \cite{NeweyMcFadden1994}, \cite{Andrews1994}, \cite{davidson1994stochastic}.
These conditions include classical moment and dependence conditions on the process $(Y_t, X_t)_{t\in\N}$ together with smoothness assumptions on the conditional expectations of the employed loss and identification functions, and crucially, an identification condition for the model parameters.
For M-estimators, this identification condition is conveniently fulfilled through \citet[Theorem  1]{DFZ_CharMest} by employing strictly consistent loss functions.
However, the analogue condition for the Z-estimator that $\psi_t$ are strict $\F_\Z$-identification functions for $\theta_0$ is more difficult to establish and generally has to be verified on a case-by-case basis; see e.g., Section \ref{app:IDZestDQRModels} for specific results for our running example of the double quantile models.

Henceforth, we impose the following assumption that ensures the specific form of the matrix $\Sigma_{T,\A}$ given in (\ref{eqn:AsymptoticCovarianceZEstimator}), in particular the absence of ``HAC'' terms \citep{NeweyWest1987}.
\begin{ass}\label{ass:PsiUncorrelated}
	Suppose that the sequence $\big(\psi_{t} (Y_t, X_t, \theta_0) \big)_{t\in\mathbb N}$ is uncorrelated.
\end{ass}

Under the above conditions on the Z-estimator $\widehat \theta_{Z,T, \A}$, it holds that
\begin{align}
	\label{eqn:AsymptoticNormalityZEstimator}
	\Sigma_{T,\A}^{-1/2} \Delta_{T,\A} \sqrt{T} \big( \widehat \theta_{Z,T, \A} - \theta_0 \big)  \tod \mathcal{N}(0, I_q),
\end{align} 
where the asymptotic covariance is governed by the terms
\begin{align}
	\label{eqn:AsymptoticCovarianceZEstimator}
	\Sigma_{T,\A} &= \frac{1}{T} \sum_{t=1}^T \mathbb{E} \big[ A_t(X_t, \theta_0) S_t(X_t,\theta_0) A_t(X_t, \theta_0)^\intercal \big]\in\R^{q\times q} \qquad \text{ and } \\ 
	\label{eqn:AsymptoticCovarianceZEstimator2}
	\Delta_{T,\A} &= \frac{1}{T} \sum_{t=1}^T \mathbb{E} \big[ A_t(X_t, \theta_0) D_t(X_t,\theta_0) \big]\in\R^{q\times q},
\end{align}
where, for any $\theta\in\Theta$,
\begin{align}
	\label{eqn:DefSMatrix}
	S_t(X_t,\theta) &= \E_t \left[ \varphi \big(Y_t, m(X_t, \theta )\big) \varphi \big( Y_t,m(X_t,\theta) \big)^\intercal \right] \in\R^{k\times k} \qquad \text{ and } \\
	\label{eqn:DefDMatrix}
	D_t(X_t,\theta) &= \nabla_{\theta} \E_t \big[ \varphi \big( Y_t,m(X_t,\theta) \big) \big]^\intercal\in\R^{k\times q}.
\end{align}

We say that an asymptotically normal estimator is \emph{efficient} if there is no other asymptotically normal estimator with a smaller covariance matrix in the \emph{Loewner order}\label{p: Loewner order} $\succcurlyeq$. For two positive semi-definite matrices $A$ and $B$, we say that $A \succcurlyeq B$ if and only if $A - B$ is positive semi-definite.
Motivated by the discussion in \citet[p.\ 102]{Newey1990}, we deliberately omit an analysis of ``superefficient'' estimators.
The following theorem establishes necessary and sufficient conditions for efficient Z-estimation by extending the theory of \cite{Hansen1985}, \cite{Chamberlain1987} and \cite{Newey1993}.
Notice that the theorem also holds in the case $Y_t \in \R^d$, $d > 1$.

\begin{theorem}
	\label{theorem:GeneralEfficientA}
	Under Assumptions \eqref{ass:unique model} and \eqref{ass:PsiUncorrelated}, let $\ph$ be a strict $\F_{\Y|\X}$-identification function for $\Gamma$. 
	Let $\widehat{\theta}_{Z,T, \A^{\ast}}$ be the Z-estimator at \eqref{eq:Z-est} that is asymptotically normal and based on the strict unconditional $\F_{\Z}$-identification function at \eqref{eqn:UnconditionalIdentificationGeneralForm} with instrument matrices $A_{t,C}^\ast(X_t,\theta)$ such that 
	\begin{align}
	\label{eqn::GeneralEfficientAMatrix}
	A_{t,C}^\ast(X_t,\theta_0) = C D_t(X_t,\theta_0)^\intercal S_t(X_t,\theta_0)^{-1} \qquad  \text{for all } t \in \mathbb{N},
	\end{align}
	where $S_t(X_t,\theta_0)$ and $D_t(X_t,\theta_0)$ are given at \eqref{eqn:DefSMatrix} and \eqref{eqn:DefDMatrix}, assuming that $S_t(X_t,\theta_0)$ is invertible, and $C$ is any deterministic and invertible $q\times q$ matrix.
	Then: 
	\begin{enumerate}[label=\rm(\roman*)]
		\item 
		\label{theorem:GeneralEfficientAPart1}
		The asymptotic covariance matrix of the Z-estimator $\widehat{\theta}_{Z,T, \A^{\ast}}$ is the limit (for $T \to \infty$) of
		\begin{align}\label{eq:Lambda}
		\Lambda_T^{-1} := \left( \frac{1}{T} \sum_{t=1}^T \mathbb{E} \left[ D_t(X_t,\theta_0)^\intercal S_t(X_t,\theta_0)^{-1} D_t(X_t,\theta_0) \right] \right)^{-1}.
		\end{align}	
		
		\item 
		\label{theorem:GeneralEfficientAPart2}
		For any sequence of instrument matrices $\A = (A_t)_{t\in\N}$, and $\Delta_{T,\A}$, $\Sigma_{T,\A}$ as given at \eqref{eqn:AsymptoticCovarianceZEstimator} and \eqref{eqn:AsymptoticCovarianceZEstimator2}, it holds that 
		$\Delta_{T,\A}^{-1} \Sigma_{T,\A} \Delta_{T,\A}^{-1} \succcurlyeq  \Lambda_T^{-1}$  for all $T\ge1$.
		\item 
		\label{theorem:GeneralEfficientAPart3}
		If for some $t\in\{1, \ldots, T\}$ and for any non-singular and deterministic matrix $C$ it holds that 
		$\P\big(A_t(X_t,\theta_0) \not= A_{t,C}^\ast(X_t,\theta_0) \big)>0$,
		then $\Delta_{T,\A}^{-1} \Sigma_{T,\A} \Delta_{T,\A}^{-1} \succcurlyeq  \Lambda_T^{-1}$ and $\Delta_{T,\A}^{-1} \Sigma_{T,\A} \Delta_{T,\A}^{-1} \not=  \Lambda_T^{-1}$.
	\end{enumerate}
\end{theorem}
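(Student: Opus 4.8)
The plan is to derive all three parts from a single variance decomposition of the estimators' influence functions. Throughout I write $\varphi_t := \varphi\big(Y_t,m(X_t,\theta_0)\big)$ and recall that, at $\theta_0$, the identification property gives $\E_t[\varphi_t]=0$ and $\E_t[\varphi_t\varphi_t^\intercal]=S_t$. Under Assumption \eqref{ass:PsiUncorrelated} the cross-time covariances vanish, so the asymptotic covariance of $\widehat\theta_{Z,T,\A}$, namely the limit of $\Delta_{T,\A}^{-1}\Sigma_{T,\A}\Delta_{T,\A}^{-\intercal}$, is the limit of $V_\A := \frac1T\sum_{t=1}^T\Var\big(\Delta_{T,\A}^{-1}A_t(X_t,\theta_0)\varphi_t\big)$; working with the centred score $\Delta_{T,\A}^{-1}A_t\varphi_t$ keeps the algebra transparent.

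For part \ref{theorem:GeneralEfficientAPart1} I would substitute $A_{t,C}^\ast(X_t,\theta_0)=C D_t^\intercal S_t^{-1}$ into \eqref{eqn:AsymptoticCovarianceZEstimator} and \eqref{eqn:AsymptoticCovarianceZEstimator2}. Using $S_t^{-1}S_t=I_k$ one obtains $\Delta_{T,\A^\ast}=C\Lambda_T$ and $\Sigma_{T,\A^\ast}=C\Lambda_T C^\intercal$, where $\Lambda_T=\frac1T\sum_t\E[D_t^\intercal S_t^{-1}D_t]$ is symmetric; cancelling the factors $C$ in the sandwich $\Delta_{T,\A^\ast}^{-1}\Sigma_{T,\A^\ast}\Delta_{T,\A^\ast}^{-\intercal}$ leaves exactly $\Lambda_T^{-1}$, whose limit is the claimed covariance \eqref{eq:Lambda}.

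The heart of the matter is part \ref{theorem:GeneralEfficientAPart2}, which I would prove by a Pythagorean (orthogonal–projection) decomposition. Set $\tilde g_t:=\Delta_{T,\A}^{-1}A_t\varphi_t$ and the optimal score $g_t^\ast:=\Lambda_T^{-1}D_t^\intercal S_t^{-1}\varphi_t$ (this is $A_{t,C}^\ast$ with $C=I_q$), so that $V_\A=\frac1T\sum_t\Var(\tilde g_t)$ and $\frac1T\sum_t\Var(g_t^\ast)=\Lambda_T^{-1}$. Using $\E_t[\varphi_t\varphi_t^\intercal]=S_t$, the tower property and the $X_t$-measurability of $A_t,D_t,S_t$, the averaged cross moment collapses:
\begin{align*}
\frac1T\sum_{t=1}^T\E\big[g_t^\ast\,\tilde g_t^\intercal\big]
&=\Lambda_T^{-1}\Big(\frac1T\sum_{t=1}^T\E\big[D_t^\intercal A_t^\intercal\big]\Big)\Delta_{T,\A}^{-\intercal}\\
&=\Lambda_T^{-1}\Delta_{T,\A}^\intercal\Delta_{T,\A}^{-\intercal}=\Lambda_T^{-1},
\end{align*}
which coincides with $\frac1T\sum_t\E[g_t^\ast(g_t^\ast)^\intercal]=\Lambda_T^{-1}$. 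Hence $\frac1T\sum_t\Cov(g_t^\ast,\tilde g_t-g_t^\ast)=0$, and since $V_\A=\frac1T\sum_t\Var\big(g_t^\ast+(\tilde g_t-g_t^\ast)\big)$ the mixed terms drop out, giving $V_\A=\Lambda_T^{-1}+\frac1T\sum_t\Var(\tilde g_t-g_t^\ast)\succcurlyeq\Lambda_T^{-1}$.

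For part \ref{theorem:GeneralEfficientAPart3} I would read strictness off the same decomposition. Equality $V_\A=\Lambda_T^{-1}$ forces $\frac1T\sum_t\Var(\tilde g_t-g_t^\ast)=0$; as this is an average of positive semi-definite matrices, each term vanishes, so $\tilde g_t-g_t^\ast=\big(\Delta_{T,\A}^{-1}A_t-\Lambda_T^{-1}D_t^\intercal S_t^{-1}\big)\varphi_t=0$ almost surely for every $t$. Writing $B_t:=\Delta_{T,\A}^{-1}A_t-\Lambda_T^{-1}D_t^\intercal S_t^{-1}$, which is $X_t$-measurable, this yields $B_tS_tB_t^\intercal=\E_t[B_t\varphi_t\varphi_t^\intercal B_t^\intercal]=0$, and invertibility of $S_t$ forces $B_t=0$, i.e. $A_t=\Delta_{T,\A}\Lambda_T^{-1}D_t^\intercal S_t^{-1}=A_{t,C}^\ast$ with the single non-singular matrix $C:=\Delta_{T,\A}\Lambda_T^{-1}$ (non-singular because $\Delta_{T,\A}$ is invertible by asymptotic normality and $\Lambda_T\succ0$). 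The contrapositive is exactly the assertion.

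The main obstacle I anticipate is the cross-moment computation in part \ref{theorem:GeneralEfficientAPart2}: one must condition carefully so that $S_t^{-1}$ cancels against $\E_t[\varphi_t\varphi_t^\intercal]$ and the term telescopes to $\Lambda_T^{-1}$, after first invoking Assumption \eqref{ass:PsiUncorrelated} and $\E_t[\varphi_t]=0$ to reduce the covariances to time-averages of conditional second moments (no HAC contributions). The other delicate point, in part \ref{theorem:GeneralEfficientAPart3}, is the passage from $B_t\varphi_t=0$ almost surely to $B_t=0$, which is precisely where the assumed invertibility of $S_t(X_t,\theta_0)$ is indispensable.
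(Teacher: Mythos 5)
Your proposal is correct, and for parts \ref{theorem:GeneralEfficientAPart1} and \ref{theorem:GeneralEfficientAPart2} it is essentially the paper's own argument: the paper also substitutes $A^\ast_{t,C}$ to get $\Delta_{T,\A^\ast}=C\Lambda_T$, $\Sigma_{T,\A^\ast}=C\Lambda_T C^\intercal$, and then expands the second moment of the score difference $\chi_{t,T}=\big(\Delta_{T,\A}^{-1}A_t-\Lambda_T^{-1}A_t^\ast\big)\varphi_t$, showing it equals $\Delta_{T,\A}^{-1}\Sigma_{T,\A}\Delta_{T,\A}^{-1}-\Lambda_T^{-1}$; your Pythagorean phrasing via the collapsing cross moment $\frac1T\sum_t\E[g_t^\ast\tilde g_t^\intercal]=\Lambda_T^{-1}$ is exactly the same computation written as an orthogonality statement.

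Where you genuinely deviate is part \ref{theorem:GeneralEfficientAPart3}. The paper argues directly: assuming $A_t\neq A^\ast_{t,C}$ with positive probability for every nonsingular deterministic $C$, it shows the matrix $M_{T,\A}=\Delta_{T,\A}^{-1}A_t-\Lambda_T^{-1}A^\ast_{t,C}$ has positive rank with positive probability, factors $S_t=G_tG_t^\intercal$ by Cholesky, invokes Sylvester's rank inequality to get that $B_{T,\A,t}=M_{T,\A}G_t$ has positive rank with positive probability, and then exhibits a coordinate direction $e_j$ with $e_j^\intercal\big(\Delta_{T,\A}^{-1}\Sigma_{T,\A}\Delta_{T,\A}^{-1}-\Lambda_T^{-1}\big)e_j>0$. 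You instead prove the contrapositive: equality of the covariances forces $\frac1T\sum_t\Var(\tilde g_t-g_t^\ast)=0$, hence $B_t\varphi_t=0$ a.s.\ (using $\E_t[\varphi_t]=0$), hence $B_tS_tB_t^\intercal=0$, hence $B_t=0$ by positive definiteness of $S_t$, so $A_t=A^\ast_{t,C}$ a.s.\ with the single matrix $C=\Delta_{T,\A}\Lambda_T^{-1}$ --- which is precisely the negation of the hypothesis. The two arguments rest on the same fact (invertibility, hence positive definiteness, of the conditional second-moment matrix $S_t$), but yours is more elementary, avoiding the rank machinery entirely; the paper's direct version buys slightly more explicit information, namely a specific standard basis direction in which the variance gap is strictly positive, which is what it cites later when discussing linear combinations $v^\intercal\widehat\theta_{Z,T,\A}$. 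One point to make airtight in your write-up: conclude $B_t=0$ from $B_tS_tB_t^\intercal=0$ by noting $S_t\succ0$ (invertible and positive semi-definite as a conditional second moment), e.g.\ via $\operatorname{tr}(B_tS_tB_t^\intercal)=\|B_tG_t\|^2=0$ with $G_t$ invertible.
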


Parts \ref{theorem:GeneralEfficientAPart1} and \ref{theorem:GeneralEfficientAPart2} of Theorem \ref{theorem:GeneralEfficientA} are direct time series generalizations of the efficiency result of \cite{Hansen1985}, \cite{Chamberlain1987}, and \cite{Newey1993}.
Together, they state that $\Lambda_T^{-1}$ is an asymptotic efficiency bound for the general Z-estimator for semiparametric models and that the Z-estimator based on the choice $A_{t,C}^\ast(X_t,\theta)$ for all $t\in\N$ which fulfills (\ref{eqn::GeneralEfficientAMatrix}) attains this efficiency bound, and is consequently an efficient Z-estimator. 
Thus, parts \ref{theorem:GeneralEfficientAPart1} and \ref{theorem:GeneralEfficientAPart2} of Theorem \ref{theorem:GeneralEfficientA} can be understood as a sufficient condition for efficient semiparametric Z-estimation.

Conversely, part \ref{theorem:GeneralEfficientAPart3} can be interpreted as a necessary condition for efficient estimation and is novel to the literature. 
It states that efficient semiparametric estimation can only be carried out by choosing instrument matrices satisfying  (\ref{eqn::GeneralEfficientAMatrix}) almost surely.
Otherwise, there is some $v\in\R^q$ such that the asymptotic variance of the linear combination $v^\intercal \widehat{\theta}_{Z,T, \A}$ is larger than the asymptotic variance of $v^\intercal \widehat{\theta}_{Z,T, \A^{\ast}}$.
This necessary condition for efficient estimation is crucial for the following sections where we show that for certain functionals, the M-estimator of semiparametric models cannot attain the Z-estimation efficiency bound and consequently neither the semiparametric efficiency bound in the sense of \cite{Stein1956}, which is further discussed in Section \ref{sec:SemipEffBound}

\begin{RunningExmp}
	\label{RunningExmp5}
	For the double quantile model based on the identification function $\varphi$ from Running Example \eqref{RunningExmp3}, Theorem \ref{theorem:GeneralEfficientA} implies that efficient Z-estimation is based on the efficient instrument matrix $A^\ast_{t,C}(X_t,\theta_0) = C D_t(X_t,\theta_0)^\intercal S_t(X_t,\theta_0)^{-1}$, where $C$ is some deterministic and nonsingular matrix and where
	\begin{equation}
		\label{eqn:DQREfficientSMainText}
		S_t(X_t,\theta_0) = 
		\begin{pmatrix}
		\alpha (1-\alpha)  &
		\alpha (1-\beta) \\
		\alpha (1-\beta) &
		\beta (1-\beta)
		\end{pmatrix},  \qquad 
		D_t(X_t,\theta_0) = 
		\begin{pmatrix}
		f_t(q_\alpha(X_t, \theta_0)) \nabla_{\theta} q_\alpha(X_t,\theta_0)^\intercal  \\
		f_t(q_\beta(X_t, \theta_0)) \nabla_{\theta} q_\beta(X_t,\theta_0)^\intercal 
		\end{pmatrix}.
	\end{equation}
	The asymmetric roles of $\alpha$ and $\beta$ in $S_t(X_t,\theta_0) $ stem from the convention that w.l.o.g.\ $\alpha<\beta$.
\end{RunningExmp}

\begin{rem}
	\label{rem:EffChoiceIdFunctio}
	The efficient instrument matrix $A_{t,C}^\ast(X_t,\theta_0)$ in Theorem \ref{theorem:GeneralEfficientA} depends on the specific choice of an identification function $\varphi$
	However, invoking the characterization result \eqref{eq:all identification functions}, if we used a different identification function $\varphi'\big(Y,m(X_t, \theta)\big) = h\big(m(X_t, \theta)\big)\varphi\big(Y,m(X_t, \theta)\big)$ in \eqref{eqn:UnconditionalIdentificationGeneralForm}, where $h$ has full rank, the resulting matrices $S_t(X_t,\theta)$ and $D_t(X_t,\theta)$ in \eqref{eqn:DefSMatrix}, \eqref{eqn:DefDMatrix} would change, but the induced conditional moment conditions \eqref{eqn:UnconditionalIdentificationGeneralForm} would remain unchanged.	
	Hence, the efficiency bound $\Lambda_T^{-1}$ is invariant to the choice of $\varphi$, 
	and Theorem \ref{theorem:GeneralEfficientA} can be interpreted as global, $\varphi$-independent, necessary and sufficient conditions for efficiency.
\end{rem}

\begin{rem}
	\label{rem:GMM}
	While overidentified GMM-estimation
	\begin{align}
		\label{eq:GMM}
		\widehat \theta_{\text{GMM},T} = \argmin_{\theta \in\Theta} \left(\frac{1}{T} \sum_{t=1}^T 
		\psi_t  (Y_t , X_t,\theta)
		\right)^\intercal W_T \left(\frac{1}{T} \sum_{t=1}^T 
		\psi_t  (Y_t , X_t,\theta)
		\right),
	\end{align}
	with  $s$-dimensional ($s > q$) functions $\psi_t$ and a positive definite weighting matrix  $W_T$ can generally improve efficiency compared to Z-estimation \citep{Hansen1982, HallBook2005}, when employing the efficient instrument choice in \eqref{eqn::GeneralEfficientAMatrix}, there is no additional efficiency gain through using overidentifying moment restrictions.
	For details, see e.g., \cite{Newey1993}, and notice that the proofs of Theorem \ref{theorem:GeneralEfficientA}, \ref{theorem:GeneralEfficientAPart1} and \ref{theorem:GeneralEfficientAPart2} work identically when including overidentifying moment restrictions together with a weighting matrix $W_T$ as in \eqref{eq:GMM}.
	Consequently, we restrict attention to efficient instrument Z-estimation in Theorem \ref{theorem:GeneralEfficientA}.
\end{rem}

\section{Semiparametric Models for Vector-Valued Functionals}
\label{sec:ExamplesRegressionFrameworks}


\subsection{Semiparametric Double Quantile Models}
\label{sec:DoubleQuantileRegressionModel}

Consider the double quantile model $m(X_t,\theta) = \big( q_\alpha(X_t,\theta), \,  q_\beta(X_t,\theta) \big)^\intercal$ at fixed levels $0 < \alpha < \beta < 1$ from our Running Examples \eqref{RunningExmp1}--\eqref{RunningExmp5}, whose importance is motivated in the Introduction.
The results of this section hold equivalently for multiple quantiles at different levels.
Let $\widehat \theta_{Z,T,\A}$ be the Z-estimator defined via \eqref{eq:Z-est} and \eqref{eqn:UnconditionalIdentificationGeneralForm} based on some sequence of instrument matrices $\A$ and the strict $\F_{\Y|\X}$-identification function $\ph(y,\xi_1,\xi_2) = \big( \mathds{1}_{\{y \le \xi_1 \}} - \alpha, \; \mathds{1}_{\{y \le \xi_2  \}} - \beta \big)^\intercal$ assuming that all distributions in $\F_{\Y|\X}$ are differentiable at their $\alpha$- and $\beta$-quantiles with strictly positive derivatives.
Recall from Remark \ref{rem:EffChoiceIdFunctio}
that the initial choice of $\ph$ is irrelevant.
The exact form (for this choice of $\ph$) of the efficient instrument matrix $A^\ast_{t,C}(X_t,\theta_0) = C D_t(X_t,\theta_0)^\intercal S_t(X_t,\theta_0)^{-1}$ is given in Running Example \eqref{RunningExmp5}. 

Under Assumptions \eqref{ass:CharMest} and \eqref{ass:GPLclass} in Appendix \ref{sec:AddAssumptions}, \citet[Theorem 1]{DFZ_CharMest} yields that the full class of consistent M-estimators at \eqref{eq:M-est} is given by the class of (strictly) $\F_{\Y|\X}$-consistent loss functions for two quantiles in \eqref{eqn:DoubleQuantileGeneralLossFunctions}.
For any sequence $G= (g_{1,t},g_{2,t})_{t\in\N}$ of such functions, we denote the corresponding M-estimators defined via \eqref{eq:M-est} by $\widehat \theta_{M,T,G}$.


We assume that the M- and Z-estimators are consistent and asymptotically normal.
Primitive conditions for this are discussed before Assumption \eqref{ass:PsiUncorrelated}.
Strict unconditional $\F_\Z$-model consistency of $\rho_t$ for the M-estimator is guaranteed by Theorem \citet[Theorem 1 (i) and (iii)]{DFZ_CharMest} for the strictly consistent losses at \eqref{eqn:DoubleQuantileGeneralLossFunctions}.
For the strict unconditional identification of the Z-estimator, we refer to Proposition \ref{prop:UniqueIdentificationDQR} in Section \ref{app:IDZestDQRModels} which shows strict identification for the efficient Z-estimator in linear models.
While generalizations of these conditions are desirable, their derivation is known to be ``quite difficult'' \cite[p.\ 2127]{NeweyMcFadden1994}.

The following theorem establishes that, under certain conditions, the M-estimator of the double quantile model is subject to the \textit{efficiency gap}, i.e., it cannot attain the Z-estimation efficiency bound, and consequently neither the semiparametric efficiency bound.

\begin{theorem}
	\label{theorem:DoubleQuantileRegressionEfficiencyBound}
	Suppose that Assumptions \eqref{ass:unique model}, \eqref{ass:PsiUncorrelated} together with Assumptions \eqref{ass:CharMest} and \eqref{ass:GPLclass} in Appendix \ref{sec:AddAssumptions} hold for the double quantile model at levels $0<\alpha<\beta<1$, $\widehat \theta_{M,T,G}$ is asymptotically normal and the following further regularity conditions hold:
	\begin{enumerate}[label=\rm{(DQ\arabic*)}, leftmargin=*]
		\item
		\label{cond:DQREfficiencyGapSeparatedParameterModel}
		The parameters of the individual models are separated,
		$m(X_t,\theta) = \big( q_\alpha(X_t,\theta^\alpha),  q_\beta(X_t,\theta^\beta) \big)^\intercal$,
		where $\theta = \big( \theta^\alpha, \theta^\beta \big) \in \interior{\Theta} \subseteq \mathbb{R}^q$, with $\theta^\alpha \in \mathbb{R}^{q_1}$ and $\theta^\beta \in \mathbb{R}^{q_2}$ and $q_1 + q_2 = q$.
		
		\item 
		\label{cond:LinearIndependentImageValues}
		For all $t\in\N$, and for all $A\in \mathcal A$ with $\P(A)=1$ there are $q_1+1$ mutually different $v_1, \ldots, v_{q_1+1} \in \big\{\nabla_{\theta^\alpha} q_\alpha(X_t(\omega),\theta_0^\alpha) \in \R^{q_1} \colon \omega\in A\big\}\subseteq \R^{q_1}$, such that any subset of cardinality $q_1$ of $\{ v_1,\dots,v_{q_1+1} \}$ is linearly independent.
		The analogue assertion holds for the gradient $\nabla_{\theta^\beta}  q_\beta(X_t,\theta_0^\beta)$, replacing $q_1$ by $q_2$.
		\item
		\label{cond:DQREfficiencyGapPositiveDensity}
		For all $t\in\N$, $F_t$ is differentiable at $q_\alpha(X_t,\theta_0^\alpha)$ and $q_\beta(X_t,\theta_0^\beta)$ and the derivatives satisfy 
		$f_t\big(q_\alpha(X_t,\theta_0^\alpha)\big) > 0$ and $f_t\big(q_\beta(X_t,\theta_0^\beta)\big) >0$, and $g'_{1,t}(\xi_1)>0,$ $g'_{2,t}(\xi_2)>0$ for all $\xi_1,\xi_2$. 
	\end{enumerate}
	Then, the following statements hold:
	\begin{enumerate}[label = \rm{(\Alph*)}]
		\item
		\label{statement:DQREfficientChoice}
		Let $\nabla_{\theta^\alpha} q_\alpha(X_t,\theta_0^\alpha) = \nabla_{\theta^\beta} q_\beta(X_t,\theta_0^\beta)$ for all $t\in\N$.
		The M-estimator $\widehat \theta_{M,T,G}$ attains the Z-estimation efficiency bound in \eqref{eq:Lambda} if and only if the following three conditions hold:
		\begin{align}
		\label{eqn:CondEffGapDQRConditions1}
		\exists c_1>0 \ \forall t\in\N: \quad 
		f_t\big(q_\alpha(X_t,\theta_0^\alpha)\big) &= c_1 f_t\big(q_\beta(X_t,\theta_0^\beta)\big)
		\quad \as, \\
		\label{eqn:CondEffGapDQRConditions2}
		\exists c_2>0 \ \forall t\in\N: \quad 
		g_{1,t}'\big(q_\alpha(X_t,\theta_0^\alpha)\big) &= c_2 f_t\big(q_\alpha(X_t,\theta_0^\alpha)\big)
		\quad \as, \\
		\label{eqn:CondEffGapDQRConditions3}
		\exists c_3>0 \ \forall t\in\N: \quad 
		g_{2,t}'\big(q_\beta(X_t,\theta_0^\beta)\big) &= c_3 f_t\big(q_\beta(X_t,\theta_0^\beta)\big)
		\quad \as
		\end{align}		
		
		\item 
		\label{statement:DQREfficiencyGap}
		Furthermore, if \eqref{eqn:CondEffGapDQRConditions2} or \eqref{eqn:CondEffGapDQRConditions3} is violated, then $\widehat \theta_{M,T,G}$
		does not attain the Z-estimation efficiency bound in \eqref{eq:Lambda}.
	\end{enumerate}	
\end{theorem}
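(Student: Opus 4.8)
The plan is to prove the contrapositive of \ref{statement:DQREfficiencyGap}: if $\widehat\theta_{M,T,G}$ attains the bound \eqref{eq:Lambda}, then both \eqref{eqn:CondEffGapDQRConditions2} and \eqref{eqn:CondEffGapDQRConditions3} hold. The entry point is that an M-estimator based on a loss of the form \eqref{eqn:DoubleQuantileGeneralLossFunctions} shares its asymptotic covariance with the instrument Z-estimator \eqref{eqn:UnconditionalIdentificationGeneralForm} whose identification function is the $\theta$-gradient of that loss. Differentiating (on the level of conditional expectations, and abbreviating $q_\alpha=q_\alpha(X_t,\theta_0^\alpha)$ etc.) yields the instrument matrix
\begin{align*}
A_t^M(X_t,\theta_0)=\big(\,g_{1,t}'(q_\alpha)\,\nabla_\theta q_\alpha\ \big|\ g_{2,t}'(q_\beta)\,\nabla_\theta q_\beta\,\big)\in\R^{q\times 2}.
\end{align*}
Under the parameter separation \ref{cond:DQREfficiencyGapSeparatedParameterModel} one has $\nabla_\theta q_\alpha=(\nabla_{\theta^\alpha}q_\alpha,0)^\intercal$ and $\nabla_\theta q_\beta=(0,\nabla_{\theta^\beta}q_\beta)^\intercal$, so $A_t^M$ is block diagonal.

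By the necessity direction of Theorem \ref{theorem:GeneralEfficientA}\ref{theorem:GeneralEfficientAPart3}, attainment forces $A_t^M=A^\ast_{t,C}=C\,D_t^\intercal S_t^{-1}$ almost surely, for all $t$, with a single non-singular deterministic $C$ and with $S_t,D_t$ from Running Example \eqref{RunningExmp5}. I would multiply this identity on the right by the deterministic invertible matrix $S_t$ and partition $C=(C_{ij})_{i,j=1,2}$ conformably with $(q_1,q_2)$. Reading off the $(1,1)$- and $(2,2)$-blocks of $A_t^M S_t=C\,D_t^\intercal$ and dividing by the strictly positive densities from \ref{cond:DQREfficiencyGapPositiveDensity} gives
\begin{align*}
C_{11}\,\nabla_{\theta^\alpha}q_\alpha=\frac{\alpha(1-\alpha)\,g_{1,t}'(q_\alpha)}{f_t(q_\alpha)}\,\nabla_{\theta^\alpha}q_\alpha,\qquad
C_{22}\,\nabla_{\theta^\beta}q_\beta=\frac{\beta(1-\beta)\,g_{2,t}'(q_\beta)}{f_t(q_\beta)}\,\nabla_{\theta^\beta}q_\beta.
\end{align*}
The left identity says that \emph{every} realization of $\nabla_{\theta^\alpha}q_\alpha$ is an eigenvector of the fixed matrix $C_{11}$.

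The crux is a short linear-algebra lemma: any matrix in $\R^{q_1\times q_1}$ admitting $q_1+1$ eigenvectors, any $q_1$ of which are linearly independent, must be a scalar multiple of the identity. (Expand the $(q_1+1)$-st eigenvector in the basis formed by the first $q_1$; general position forces all coordinates to be nonzero, and comparing $C_{11}$ applied to both sides forces all eigenvalues to coincide.) The richness condition \ref{cond:LinearIndependentImageValues} supplies exactly $q_1+1$ such eigenvectors of $C_{11}$, whence $C_{11}=c_{11}I_{q_1}$; the common eigenvalue then equals $c_{11}$ almost surely, i.e.\ $g_{1,t}'(q_\alpha)=\tfrac{c_{11}}{\alpha(1-\alpha)}f_t(q_\alpha)$, which is \eqref{eqn:CondEffGapDQRConditions2} with $c_2=c_{11}/(\alpha(1-\alpha))>0$ (positivity from \ref{cond:DQREfficiencyGapPositiveDensity}). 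Since $C$ is a single matrix valid for all $t$, $c_2$ is automatically independent of $t$. The symmetric argument on the $(2,2)$-block, using the analogue of \ref{cond:LinearIndependentImageValues} for $\nabla_{\theta^\beta}q_\beta$, delivers \eqref{eqn:CondEffGapDQRConditions3}. Contraposing: if \eqref{eqn:CondEffGapDQRConditions2} or \eqref{eqn:CondEffGapDQRConditions3} fails, no admissible $C$ exists and the bound cannot be attained. (Only the diagonal blocks enter here; the off-diagonal blocks, relevant for the sharper characterization in \ref{statement:DQREfficientChoice}, are not needed.)

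I expect the main obstacle to be the passage from the pointwise eigenvector relations to the globally stated conditions. Two points require care. First, the eigenvector lemma must be applied on the probability-one set where $A_t^M=A^\ast_{t,C}$, checking that the $q_1+1$ generating vectors are genuinely nonzero --- guaranteed by the general-position clause of \ref{cond:LinearIndependentImageValues}. Second, and more delicately, the proportionality constant must not depend on $t$; this hinges on applying Theorem \ref{theorem:GeneralEfficientA}\ref{theorem:GeneralEfficientAPart3} with a single deterministic $C$, justified because the bound \eqref{eq:Lambda} is attained only by the single-$C$ family $A^\ast_{\cdot,C}$, a time-varying rescaling strictly inflating the sandwich covariance $\Delta_{T,\A}^{-1}\Sigma_{T,\A}\Delta_{T,\A}^{-1}$ above $\Lambda_T^{-1}$; should only per-$t$ matching be available a priori, a direct covariance computation settles the residual case.
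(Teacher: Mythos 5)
Your treatment of part \ref{statement:DQREfficiencyGap} is correct and follows essentially the paper's own route: the loss gradient produces the block-diagonal instrument matrix (the paper's \eqref{eqn:DQRGeneralInstrumentMatrixMestimator}); Theorem \ref{theorem:GeneralEfficientA} part \ref{theorem:GeneralEfficientAPart3} reduces attainment to $A_t^M=A^\ast_{t,C}$ almost surely for a deterministic non-singular $C$; and the two diagonal blocks of $A_t^M S_t = C D_t^\intercal$ (here $S_t$ is indeed deterministic and invertible, with $\det S_t=\alpha(1-\beta)(\beta-\alpha)>0$) become eigenvector relations that, under \ref{cond:LinearIndependentImageValues}, force $C_{11}$ and $C_{22}$ to be scalar multiples of the identity, which is exactly \eqref{eqn:CondEffGapDQRConditions2} and \eqref{eqn:CondEffGapDQRConditions3} with time-independent constants because $C$ is common to all $t$. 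Your basis-expansion proof of the eigenvector lemma is an equivalent, arguably cleaner, substitute for the paper's eigenspace-dimension count, and your flagging of the single-$C$ subtlety (the literal contrapositive of part \ref{theorem:GeneralEfficientAPart3} only yields a per-$t$ matrix $C_t$) addresses a point the paper passes over silently; your resolution is the right one, since in the proof of Theorem \ref{theorem:GeneralEfficientA} the difference $\Delta_{T,\A}^{-1}\Sigma_{T,\A}\Delta_{T,\A}^{-1}-\Lambda_T^{-1}$ is a sum of positive semi-definite terms whose vanishing pins every $A_t$ to $\Delta_{T,\A}\Lambda_T^{-1}D_t(X_t,\theta_0)^\intercal S_t(X_t,\theta_0)^{-1}$, with the \emph{same} deterministic prefactor for all $t$.

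The genuine gap is that the statement also contains the equivalence \ref{statement:DQREfficientChoice}, and neither of its directions follows from your diagonal-block analysis. For the ``only if'' direction you must additionally work the off-diagonal block: under the gradient condition $\nabla_{\theta^\alpha}q_\alpha(X_t,\theta_0^\alpha)=\nabla_{\theta^\beta}q_\beta(X_t,\theta_0^\beta)$ (which forces $q_1=q_2$, so that $C_{12}$ is square) and with \eqref{eqn:CondEffGapDQRConditions2} already in hand, the upper-right component of the matching identity \eqref{eqn:DQRModelAMatrixEquality} reads $C_{12}\,\nabla_{\theta^\alpha}q_\alpha = \alpha(1-\beta)c_2\big(f_t(q_\alpha(X_t,\theta_0^\alpha))/f_t(q_\beta(X_t,\theta_0^\beta))\big)\nabla_{\theta^\alpha}q_\alpha$, and the same eigenvalue argument forces the density ratio to be constant, i.e.\ \eqref{eqn:CondEffGapDQRConditions1}. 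For the ``if'' direction, necessity reasoning is of no use: one must \emph{exhibit} an admissible $C$ and verify attainment. The paper takes $C_{11}=\alpha(1-\alpha)c_2 I$, $C_{12}=\alpha(1-\beta)c_1c_2 I$, $C_{21}=\alpha(1-\beta)(c_3/c_1)I$, $C_{22}=\beta(1-\beta)c_3 I$, checks $\det(C)\neq 0$ using $0<\alpha<\beta<1$, verifies $A_t^M=A^\ast_{t,C}$ almost surely from \eqref{eqn:CondEffGapDQRConditions1}--\eqref{eqn:CondEffGapDQRConditions3}, and then invokes parts \ref{theorem:GeneralEfficientAPart1} and \ref{theorem:GeneralEfficientAPart2} of Theorem \ref{theorem:GeneralEfficientA} to conclude that the bound \eqref{eq:Lambda} is attained. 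Since this construction requires precisely the non-zero off-diagonal blocks you set aside, your proposal stands as a complete proof of \ref{statement:DQREfficiencyGap} but not of the theorem as stated.
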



A discussion of the conditions of Theorem \ref{theorem:DoubleQuantileRegressionEfficiencyBound}
is in order.
Assumptions \eqref{ass:CharMest} and \eqref{ass:GPLclass} are required to characterize the class of M-estimators; see the previous Running Examples and \citet{DFZ_CharMest} and \citet{FisslerZiegel2016} for a discussion.
The separated parameter condition \ref{cond:DQREfficiencyGapSeparatedParameterModel} contains a large class of possible models. 
E.g., it nests classically used individual quantile models for separate probability levels $\alpha$ and $\beta$. These parameters can also be restricted through inequality relations, e.g., to impede quantile crossings.
While models with joint parameters would also be interesting, completely different methods of proof are required to generalize Theorem \ref{theorem:DoubleQuantileRegressionEfficiencyBound} along these lines.
Our simulation results in Section \ref{sec:DQRSimStudy} indicate that the efficiency gap carries over  to joint parameter models, and is numerically even more severe.

Condition \ref{cond:LinearIndependentImageValues} concerns the variability of the model gradient and is slightly stronger than the classical assumption on univariate models $m$ that the matrix $\mathbb{E} \big[ \nabla_\theta m(X_t,\theta_0)  \nabla_\theta m(X_t,\theta_0)^\intercal \big]$ is of full rank for all $t \in \mathbb{N}$. E.g., consider a linear model with explanatory variable $X_t = (1,V_t)^\intercal$, where $V_t$ attains only $0$ and $1$ with positive probability.
Then, $\mathbb{E} \big[ \nabla_\theta m(X_t,\theta_0)  \nabla_\theta  m(X_t,\theta_0)^\intercal \big] = \mathbb{E} \big[ X_t X_t^\intercal \big]$ is positive definite whereas condition \ref{cond:LinearIndependentImageValues} is not fulfilled.
However, if $V_t$ attains at least three different values with positive probability (or if its distribution is absolutely continuous), \ref{cond:LinearIndependentImageValues} holds.
Condition \ref{cond:DQREfficiencyGapPositiveDensity} is standard for asymptotic normality in quantile regressions.

The gradient condition $\nabla_{\theta^\alpha}  q_\alpha(X_t,\theta_0^\alpha) = \nabla_{\theta^\beta} q_\beta(X_t,\theta_0^\beta)$ in \ref{statement:DQREfficientChoice} is mainly motivated through models that are linear in the parameters, 
where these gradients are simply $X_t$.
In contrast, statement \ref{statement:DQREfficiencyGap} holds independent of this gradient condition for general semiparametric models with separated parameters, but does not provide sufficient conditions for efficient M-estimation.

Section \ref{sec:GapSepModels} shows that the efficiency gap indeed affects the important diagonal entries of the covariance matrix, which is not immediate from Theorem \ref{theorem:DoubleQuantileRegressionEfficiencyBound}.

For the remainder of this subsection, we assume for simplicity that the gradient condition $\nabla_{\theta^\alpha} q_\alpha(X_t,\theta_0^\alpha) = \nabla_{\theta^\beta} q_\beta(X_t,\theta_0^\beta)$ holds, putting us in the situation of \ref{statement:DQREfficientChoice}.
Then, the core condition of this theorem on the underlying process is  (\ref{eqn:CondEffGapDQRConditions1}).
Given that (\ref{eqn:CondEffGapDQRConditions1}) holds, the remaining conditions (\ref{eqn:CondEffGapDQRConditions2}) and (\ref{eqn:CondEffGapDQRConditions3}) are fulfilled by using the obvious choices 
\begin{align}
\label{eqn:DRQChoicesg1g2}
g_{1,t}(\xi_1) = F_t(\xi_1), \qquad \text{ and } \qquad 
g_{2,t}(\xi_2) = F_t(\xi_2), \qquad 
\forall \xi_1,\xi_2 \in \mathbb{R}, \quad  \forall t \in \mathbb{N}.
\end{align}
These conditions coincide with classical efficient semiparametric quantile estimation (for one quantile only) in \cite{Komunjer2010a, Komunjer2010b}.
We refer to (\ref{eqn:DRQChoicesg1g2}) as the \textit{pseudo-efficient} choices as they attain the Z-estimation efficiency bound only in certain situations.

We now analyze the validity of the core condition (\ref{eqn:CondEffGapDQRConditions1}) for double quantile models of the form
\begin{align}
		Y_t = q_\alpha(X_t, \theta_0^\alpha) + u_t^\alpha 
		\qquad \text{and} \qquad
		Y_t = q_\beta(X_t, \theta_0^\beta) + u_t^\beta,
\end{align}
where the two quantile-innovations $(u_t^\alpha)_{t\in\N}$ and $(u_t^\beta)_{t\in\N}$ satisfy the quantile-stationarity conditions $Q_\alpha(u_t^\alpha \,|\, X_t ) = 0$ and $Q_\beta(u_t^\beta \,|\, X_t) = 0$,  such that Assumption \eqref{ass:unique model} is satisfied.
Apart from this assumption, these innovations can be heterogeneously distributed.
Clearly, $u_t^\alpha$ and $u_t^\beta$ are generally dependent. 

Such correctly specified double quantile models can for instance be generated through a process 
\begin{align}
	\label{eqn:GeneralLocScaleDGP}
	Y_t = \zeta(X_t) + \eta(X_t) \varepsilon_t,
\end{align}
for functions $\zeta\colon \R^p\to\R$, $\eta\colon \R^p\to (0,\infty)$,
where the innovations $(\varepsilon_t)_{t\in\N}$ are themselves independent,  independent of $(X_t)_{t\in\N}$, and where $z_\alpha = F_{\varepsilon_t}^{-1}(\alpha)$ and $z_\beta = F_{\varepsilon_t}^{-1}(\beta)$ are time-independent.
Then, the conditional quantiles at level $\alpha \in (0,1)$ (and equivalently for $\beta$) are given by $q_\alpha(X_t, \theta_0^\alpha)  = Q_\alpha(Y_t|X_t) =  \zeta(X_t)  + \eta(X_t) z_\alpha$.
E.g., if $\zeta(X_t)$ and $\eta(X_t)$ are linear in $X_t$, as in the simulation setup in Section \ref{sec:DQRSimStudy}, we also get linear conditional quantile models $q_\alpha(X_t, \theta_0^\alpha)$ and $q_\beta(X_t, \theta_0^\beta)$.
While the process in \eqref{eqn:GeneralLocScaleDGP} resembles the ubiquitous class of location-scale processes, the quantities $\zeta(X_t)$ and $\eta(X_t)$ possibly lose their interpretation as \textit{location} and \textit{scale} for sufficiently heterogeneously distributed innovations $\varepsilon_t$.

For a process in \eqref{eqn:GeneralLocScaleDGP}, the density transformation formula yields that
(\ref{eqn:CondEffGapDQRConditions1}) is equivalent to
\begin{align}
\label{eqn:DQRLocScaleModelRatioDensities}
\frac{	f_t \big( q_\alpha(X_t,\theta_0^\alpha) \big) }{	f_t \big( q_\beta(X_t,\theta_0^\beta) \big) }
= \frac{f_{\varepsilon_t}(z_\alpha)}{f_{\varepsilon_t}(z_\beta)} \stackrel{!}{=} c_1 \qquad  \forall  t \in \mathbb{N}.
\end{align}
This implies that  for processes of the form (\ref{eqn:GeneralLocScaleDGP}), the M-estimator $\widehat \theta_{M,T,G}$ of the double quantile model is able to attain the efficiency bound (based on the choices in (\ref{eqn:DRQChoicesg1g2})), if and only if the density ratio in (\ref{eqn:DQRLocScaleModelRatioDensities}) is constant in $t$.
Consequently, for any i.i.d.\ innovations $(\varepsilon_t)_{t\in\N}$, the M-estimator based on the choices (\ref{eqn:DRQChoicesg1g2}) attains the Z-estimation efficiency bound.

However, one can easily construct examples where condition (\ref{eqn:DQRLocScaleModelRatioDensities}) is violated, e.g., by considering Student's $t$-distributed innovations $\varepsilon_t \sim t_{\nu_t}(\mu_t,\sigma_t^2)$ with time-varying degrees of freedom $\nu_t$, and where the time-varying means and standard deviations are given by
\begin{align}
\label{eqn:DQRChoiceMuSigma}
\mu_t =  Q_\beta(t_{\nu_1}) - \sigma_t Q_\beta(t_{\nu_t}) \qquad \text{ and } \qquad \sigma_t = \frac{Q_\alpha(t_{\nu_1}) - Q_\beta(t_{\nu_1}) }{Q_\alpha(t_{\nu_t}) - Q_\beta(t_{\nu_t}) },
\end{align}
where $t_\nu = t_\nu(0,1)$.
These choices ensure that for $\alpha, \beta \in (0,1)$, $\alpha < \beta$, we have $Q_\alpha \left( t_{\nu_t} \big( \mu_{t}, \sigma_{t}^2 \big) \right) = z_\alpha$ and $Q_\beta \left( t_{\nu_t} \big( \mu_{t}, \sigma_{t}^2 \big) \right) = z_\beta$ for all $t \in \mathbb{N}$, and hence, the quantile-stationarity condition is satisfied while simultaneously condition (\ref{eqn:CondEffGapDQRConditions1}) is violated for all quantile levels such that $\alpha \not= 1-\beta$.

For centered or equal-tailed prediction intervals with $\alpha= 1-\beta < 0.5$, we can choose skewed normally distributed innovations \citep{Azzalini1985}  $\varepsilon_t \sim \mathcal{SN} (\mu_t,\sigma_t^2, \gamma_t)$  with time-varying skewness $\gamma_t$, where the means $\mu_t$ and the standard deviations $\sigma_t$ are given by
\begin{align}
\begin{aligned}
\label{eqn:DQRSkewedNormalChoiceMuSigma}
\mu_t =  Q_\beta \big( \mathcal{SN} (\gamma_1) \big) - \sigma_t Q_\beta \big( \mathcal{SN} (\gamma_t) \big), \qquad
\sigma_t = \frac{Q_\alpha \big( \mathcal{SN} (\gamma_1) \big) - Q_\beta \big( \mathcal{SN} (\gamma_1) \big) }{Q_\alpha \big( \mathcal{SN} (\gamma_t) \big) - Q_\beta \big( \mathcal{SN} (\gamma_t) \big) }, 
\end{aligned}
\end{align}
where $\mathcal{SN} (\gamma_1) := \mathcal{SN} (0,1,\gamma_1)$.
Then, $Q_\alpha \left( \mathcal{SN} (\mu_t, \sigma_t^2, \gamma_t) \right) = z_\alpha$ and $Q_\beta \left( \mathcal{SN} (\mu_t, \sigma_t^2, \gamma_t) \right) = z_\beta$  for all $t \in \mathbb{N}$ and for all $\alpha, \beta \in (0,1)$, $\alpha < \beta$.
We employ these models in the simulations in Section \ref{sec:DQRSimStudy}, where we numerically confirm the theoretical results of this section.

Constructing further processes where the M-estimator cannot attain the Z-estimation efficiency bound can be carried out along these lines, where the crucial condition is that the data generating mechanism must go beyond the class of simple location-scale processes with i.i.d.\ residuals.
Interesting candidates are GAS models of \cite{Creal2013}, and specifically for quantiles, the CAViaR specifications of \cite{Engle2004} and \citet{WhiteKimManganelli2015}.

In summary, there exists an efficiency gap for the double quantile model.
Its presence mainly depends on the underlying process through the key condition in \eqref{eqn:DQRLocScaleModelRatioDensities}. 
The elementary reason for this efficiency gap is the relatively narrow class of strictly consistent loss functions for quantiles at different levels in (\ref{eqn:DoubleQuantileGeneralLossFunctions}), which only consists of the sum of strictly consistent losses for the individual quantiles. 
In particular, this class is much smaller than the corresponding class of strict identification functions; see the Running Example \eqref{exmp:illustration osband} for details.

\subsection{Semiparametric Joint Quantile and ES Models}
\label{sec:QuantileESRegressionModel}

%

Consider a joint model for the quantile (or VaR) and ES at level $\alpha \in (0,1)$, given by $m(X_t,\theta) = \big( q_\alpha(X_t,\theta), e_\alpha(X_t,\theta) \big)^\intercal$, where $q_\alpha(X_t,\theta)$ is a model for the $\alpha$-quantile and $e_\alpha(X_t, \theta)$ denotes a model for the $\ES_\alpha$ at level $\alpha$.
For a random variable $Z$ with quantiles $Q_u(Z)$, the $\ES_\alpha(Z)$ is defined as  $ \frac{1}{\alpha} \int_0^\alpha Q_u(Z) \mathrm{d}u$ that simplifies to $\ES_\alpha(Z) = \mathbb{E} \left[ Z \; | \; Z \le Q_\alpha(Z)  \right]$ if $\mathbb{P}\big(Z \le Q_\alpha(Z)\big)=\alpha$.

As shown by \cite{Gneiting2011} and \cite{Weber2006}, ES is generally neither elicitable nor identifiable and thus,
Theorem 1 (ii) and (iv) \citet{DFZ_CharMest} and Propositions S1 and S3 in its supplementary material provide formal evidence that both M- and Z-estimation of semiparametric models for the conditional ES stand-alone are infeasible.
However, \cite{FisslerZiegel2016} show that under mild conditions, the pair $(Q_\alpha, \ES_\alpha)$ is jointly elicitable and identifiable, and further characterize the class of strictly consistent loss functions.
Due to the recent introduction of ES into the Basel framework as the standard risk measure in banking regulation \citep{Basel2016}, there is a fast-growing interest in semiparametric models for ES (jointly with the quantile) and \cite{Patton2019}, \cite{DimiBayer2019}, \cite{Taylor2019}, \cite{DimiSchnaitmann2019}, \cite{GuillenETAL2021}, among many others, utilize these losses for M-estimation of joint semiparametric models.


Suppose that $\F_{\Y|\X}$ contains only continuous and strictly increasing distribution functions with an integrable lower tail.
Consider the strict $\F_{\Y|\X}$-identification function 
\begin{align}
\label{eqn:QESIdFunction}
\varphi(y, \xi_1, \xi_2) = 
\begin{pmatrix}
\mathds{1}_{\{ y \le \xi_1 \}} - \alpha \\
\xi_2 - \xi_1 + \frac{1}{\alpha} \big( \xi_1 - y \big) \mathds{1}_{\{ y \le \xi_1 \}}
\end{pmatrix},
\end{align}
and define the Z-estimator $\widehat \theta_{Z,T,\A}$  via \eqref{eq:Z-est} and \eqref{eqn:UnconditionalIdentificationGeneralForm} based on some sequence of instrument matrices $\A$.
From Theorem \ref{theorem:GeneralEfficientA}, we get that the efficient estimator has to fulfil the condition $A^\ast_{t,C}(X_t,\theta_0) = C D_t(X_t,\theta_0)^\intercal S_t(X_t,\theta_0)^{-1}$ for some deterministic and nonsingular matrix $C$, where
\begin{align}
\label{eqn:QESREfficientMatrixD}
 D_t(X_t,\theta_0) 
 &= 
\begin{pmatrix}
f_t \big(q_\alpha(X_t, \theta_0) \big) \nabla_\theta q_\alpha(X_t,\theta_0)^\intercal \\
\nabla_\theta e_\alpha(X_t,\theta_0)^\intercal
\end{pmatrix}  \qquad \text{ and } \\[0.5em]
\label{eqn:QESREfficientMatrixS}
S_t(X_t,\theta_0) &= 
\begin{pmatrix}
\alpha (1-\alpha) &
(1-\alpha) \big( q_\alpha(X_t,\theta_0) - e_\alpha(X_t,\theta_0) \big) \\
(1-\alpha) \big( q_\alpha(X_t,\theta_0) - e_\alpha(X_t,\theta_0) \big) &
S_{t,22}
\end{pmatrix},\\[0.5em]
\nonumber
S_{t,22} &= \frac{1}{\alpha} \Var_t \big(  Y_t  \big| Y_t \le q_\alpha(X_t,\theta_0) \big) 
+ \frac{1-\alpha}{\alpha} \big( e_\alpha(X_t,\theta_0) - q_\alpha(X_t,\theta_0) \big)^2\,.
\end{align}

Under Assumptions \eqref{ass:unique model} and \eqref{ass:CharMest}, \citet[Theorem 1]{DFZ_CharMest} shows that M-estimation can be carried out if and only if a (strictly) $\F_{\Y|\X}$-consistent loss functions for the pair $(Q_\alpha, \ES_\alpha)$ is used.
\citet[Theorem 5.2, Corollary 5.5]{FisslerZiegel2016} show that under Assumption \eqref{ass:FZclass}, this whole class is given by
\begin{align} 
\begin{aligned}
\label{eqn:QESRGeneralLossFunctions}
\rho_t(y,\xi_1,\xi_2) = & \left(\mathds{1}_{\{y \le \xi_1\}} -\alpha \right) g_t(\xi_1) - \mathds{1}_{\{y \le \xi_1\}} g_t(y)  + \kappa_t(y)                \\
& + \phi_t'(\xi_2) \left( \xi_2 - \xi_1 + \frac{1}{\alpha} \big( \xi_1 - y \big) \mathds{1}_{\{ y \le \xi_1 \}}  \right) - \phi_t(\xi_2),
\end{aligned}	
\end{align}	
where $\xi_1\mapsto g_t(\xi_1) + \xi_1\phi_t'(\xi_2)/\alpha$ is (strictly) increasing for each $\xi_2$, $\phi_t$ is (strictly) convex
and $\rho_t$ is $\F_{\Y|\X}$-integrable.
For sequences $G = (g_t)_{t\in\N}$ and $\Phi = (\phi_t)_{t\in\N}$ of such functions, we denote the  M-estimator defined via \eqref{eq:M-est} and \eqref{eqn:QESRGeneralLossFunctions} by $\widehat \theta_{M,T,G, \Phi}$.

The following theorem establishes that, under certain conditions, the M-estimator of the joint quantile and ES regression model is subject to the \textit{efficiency gap}.

\begin{theorem}
	\label{theorem:QuantileESRegressionEfficiencyGap}
	Suppose that Assumptions \eqref{ass:unique model}, \eqref{ass:PsiUncorrelated} together with Assumptions \eqref{ass:CharMest} and \eqref{ass:FZclass} in Appendix \ref{sec:AddAssumptions} hold for the joint quantile and ES model at level $\alpha\in(0,1)$, $\widehat \theta_{M,T,G, \Phi}$ is asymptotically normal and the following further regularity conditions hold:
	\begin{enumerate}[label=\rm{(QES\arabic*)}, leftmargin=*]
		\item
		\label{cond:QESREfficiencyGapSeparatedParameterModel}
		The parameters of the individual models are separated,
		$m(X_t,\theta) = \big(q_\alpha(X_t,\theta^q),  e_\alpha(X_t,\theta^e) \big)^\intercal$,
		where $\theta = \big( \theta^q, \theta^e \big) \in \Theta \subseteq \mathbb{R}^q$, with $\theta^q \in \mathbb{R}^{q_1}$ and $\theta^e \in \mathbb{R}^{q_2}$ and $q_1 + q_2 = q$.
		
		\item 
		\label{cond:QESRLinearIndependentImageValues}
		For all $t\in\N$, and for all  $A\in \mathcal A$ with $\P(A)=1$ there are $q_1+1$ mutually different $v_1, \ldots, v_{q_1+1} \in \big\{\nabla_{\theta^q} q_\alpha(X_t(\omega),\theta_0^q) \in \R^{q_1} \colon \omega\in A\big\}\subseteq \R^{q_1}$, such that any subset of cardinality $q_1$ of $\{ v_1,\dots,v_{q_1+1} \}$ is linearly independent.
		The analogue assertion holds for the gradient $\nabla_{\theta^e}  e_\alpha(X_t,\theta_0^e)$, replacing $q_1$ by $q_2$.
		\item 
		\label{cond:QESRPositiveStuff}
		For all $t\in\N$, $F_t$ is differentiable at $q_\alpha(X_t,\theta_0^q)$ with 
		$f_t\big(q_\alpha(X_t,\theta_0^q)\big) > 0$ and 
		$ g_t'(\xi_1) + \phi_t'(\xi_2)/\alpha >0$ and $\phi_t''(\xi_2) >0$ for all $\xi_1, \xi_2$.		
	\end{enumerate}
	Then, the following statements hold:
	\begin{enumerate}[label = \rm{(\Alph*)}]
		\item
		\label{statement:QESREfficientChoice}
		Let $\nabla_{\theta^q} q_\alpha(X_t,\theta_0^q) = \nabla_{\theta^e} e_\alpha(X_t,\theta_0^e)$ for all $t \in \mathbb{N}$.
		The M-estimator $\widehat \theta_{M,T,G, \Phi}$ attains the Z-estimation efficiency bound in \eqref{eq:Lambda} if and only if the following five conditions hold:	
		\begin{align}
		\label{eqn:CondEffGapQESRConditions1}
		\exists c_1>0 \ \forall t\in\N :\quad 
		\operatorname{Var}_t \big(  Y_t  \big| Y_t \le q_\alpha(X_t,\theta^q_0) \big) = c_1 \big( q_\alpha(X_t,\theta^q_0) - e_\alpha(X_t,\theta^e_0) \big)^2\quad \as, \\
		\label{eqn:CondEffGapQESRConditions2}
		\exists c_2>0 \ \forall t\in\N :\quad 
		f_t\big(q_\alpha(X_t,\theta^q_0)\big) = \frac{c_2}{q_\alpha(X_t,\theta^q_0) - e_\alpha(X_t,\theta^e_0)} \quad \as,\\
		\label{eqn:CondEffGapQESRConditions3}
		\exists c_3>0 \ \forall t\in\N :\quad 
		\phi_t''\big(e_\alpha(X_t,\theta^e_0)\big) =\frac{c_3}{\operatorname{Var}_t \big(  Y_t  \big| Y_t \le q_\alpha(X_t,\theta^q_0) \big)} \quad \as, \\
		\label{eqn:CondEffGapQESRConditions4}
		\exists c_4 \in\R \ \forall t\in\N \ \exists c_{5,t} \in \R:\quad 
		g_t'\big(q_\alpha(X_t,\theta^q_0)\big) = c_4 f_t\big(q_\alpha(X_t,\theta^q_0)\big) + c_{5,t}\quad \as, \\
		\label{eqn:CondEffGapQESRConditions5}
		\forall t\in\N :\quad 
		\phi_t'\big(e_\alpha(X_t,\theta^e_0)\big) = \frac{c_3}{c_1 c_2} f_t\big(q_\alpha(X_t,\theta^q_0)\big) - \alpha c_{5,t} \quad \as
		\end{align}	
		%
		\item 
		\label{statement:QESREfficiencyGap}
		Furthermore, if \eqref{eqn:CondEffGapQESRConditions1}, or \eqref{eqn:CondEffGapQESRConditions3}, or 	
		\begin{align}
		\label{eqn:CondEffGapQESRConditionAdditional}
		\exists c_6>0 \ \forall t\in\N:\quad 
		g_t'\big(q_\alpha(X_t,\theta^q_0)\big) + \phi_t'\big(e_\alpha(X_t,\theta^e_0)\big)/\alpha = c_6 f_t\big(q_\alpha(X_t,\theta^q_0)\big) \quad\as
		\end{align}
		is violated, then 
		$\widehat \theta_{M,T,G, \Phi}$ does not attain the Z-estimation efficiency bound in \eqref{eq:Lambda}.	
	\end{enumerate}
\end{theorem}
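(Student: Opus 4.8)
The plan is to reduce the statement to the necessary-and-sufficient efficiency criterion of Theorem~\ref{theorem:GeneralEfficientA}\ref{theorem:GeneralEfficientAPart3}: the M-estimator $\widehat\theta_{M,T,G,\Phi}$ attains the bound \eqref{eq:Lambda} if and only if its equivalent Z-estimator uses an instrument matrix $A_{t,C}^\ast = C D_t^\intercal S_t^{-1}$ almost surely for some deterministic nonsingular $C$. First I would identify that instrument matrix. Applying Osband's principle \eqref{eq:Osband} to the loss \eqref{eqn:QESRGeneralLossFunctions}, I would differentiate $\E_t[\rho_t(Y_t,\xi)]$; the Leibniz boundary terms cancel and one obtains $\nabla_\xi\E_t[\rho_t]=h_t(\xi)\E_t[\varphi(Y_t,\xi)]$ with the \emph{diagonal} matrix $h_t(\xi)=\operatorname{diag}\!\big(g_t'(\xi_1)+\phi_t'(\xi_2)/\alpha,\ \phi_t''(\xi_2)\big)$. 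Writing $a_t=g_t'(q_\alpha)+\phi_t'(e_\alpha)/\alpha$, $b_t=\phi_t''(e_\alpha)$ and $f_t=f_t(q_\alpha)$, and noting that $D_t^\intercal=(f_t\nabla_\theta q_\alpha\mid\nabla_\theta e_\alpha)$ differs from the model Jacobian only by the factor $f_t$ in the first column, the chain rule gives the M-estimator instrument matrix $A_t^M=D_t^\intercal\operatorname{diag}(a_t/f_t,\,b_t)$.

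I would then rewrite the efficiency criterion $A_t^M=C D_t^\intercal S_t^{-1}$ as $A_t^M S_t=C D_t^\intercal$, and exploit the block-diagonal form of $D_t^\intercal$ forced by the separated-parameter structure \ref{cond:QESREfficiencyGapSeparatedParameterModel} to split it into four block equations, each of the shape (deterministic block)$\cdot$(random gradient)$=$(random scalar)$\cdot$(random gradient). The engine is a general-position lemma built on \ref{cond:QESRLinearIndependentImageValues}: if a deterministic matrix $M$ satisfies $Mg(\omega)=\lambda(\omega)g(\omega)$ a.s., where the image of $g$ contains $q_i+1$ vectors any $q_i$ of which are linearly independent, then $M=\lambda_0 I$ with $\lambda\equiv\lambda_0$ constant. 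For statement \ref{statement:QESREfficientChoice} the gradient condition $\nabla_{\theta^q}q_\alpha=\nabla_{\theta^e}e_\alpha=:g$ makes all four blocks share the same $g$, so the lemma applied blockwise forces $C$ to be block-scalar and yields that $a_t s_{11}/f_t$, $a_t s_{12}$, $b_t s_{12}/f_t$ and $b_t s_{22}$ are all constant in $t$ (a.s.), with $s_{11},s_{12},s_{22}$ the entries of $S_t$ in \eqref{eqn:QESREfficientMatrixS}. Substituting $s_{11}=\alpha(1-\alpha)$, $s_{12}=(1-\alpha)(q_\alpha-e_\alpha)$ and the expression for $S_{t,22}$, these four identities are equivalent to \eqref{eqn:CondEffGapQESRConditions1}, \eqref{eqn:CondEffGapQESRConditions2}, \eqref{eqn:CondEffGapQESRConditions3} together with \eqref{eqn:CondEffGapQESRConditionAdditional}; introducing the free function $c_{5,t}$ to split the single constraint \eqref{eqn:CondEffGapQESRConditionAdditional} on $a_t$ into the two halves \eqref{eqn:CondEffGapQESRConditions4}, \eqref{eqn:CondEffGapQESRConditions5} gives the stated five conditions. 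For the converse I would read the block-scalar entries of $C$ off these constants and check admissibility via a short determinant computation: the relevant $2\times2$ determinant reduces to $c_6 c_3(1-\alpha)$, which is strictly positive under \ref{cond:QESRPositiveStuff}, so $C$ is nonsingular.

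For statement \ref{statement:QESREfficiencyGap} the gradient condition is dropped, so the off-diagonal blocks now couple the two \emph{distinct} gradients $\nabla_{\theta^q}q_\alpha$ and $\nabla_{\theta^e}e_\alpha$ and the lemma cannot be applied to them directly; handling this is the step I expect to be the main obstacle. The trick is to \emph{compose} the two off-diagonal equations: from $C_{12}\nabla_{\theta^e}e_\alpha=a_t s_{12}\nabla_{\theta^q}q_\alpha$ and $C_{21}\nabla_{\theta^q}q_\alpha=(b_t s_{12}/f_t)\nabla_{\theta^e}e_\alpha$ one obtains $C_{21}C_{12}\nabla_{\theta^e}e_\alpha=(a_t b_t s_{12}^2/f_t)\nabla_{\theta^e}e_\alpha$, which is again of single-gradient form, so the lemma forces $a_t b_t s_{12}^2/f_t$ to be constant. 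The two diagonal blocks still yield \eqref{eqn:CondEffGapQESRConditionAdditional} (from the quantile block) and $b_t S_{t,22}$ constant (from the ES block).

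Finally I would combine these. Using $a_t\propto f_t$ in the product identity gives $b_t(q_\alpha-e_\alpha)^2$ constant; subtracting its contribution from $b_t S_{t,22}$ isolates $b_t\operatorname{Var}_t\!\big(Y_t\mid Y_t\le q_\alpha\big)$ constant, which is \eqref{eqn:CondEffGapQESRConditions3}, and dividing the two identities gives $\operatorname{Var}_t\!\big(Y_t\mid Y_t\le q_\alpha\big)\propto(q_\alpha-e_\alpha)^2$, which is \eqref{eqn:CondEffGapQESRConditions1}. Read contrapositively, violation of any one of \eqref{eqn:CondEffGapQESRConditions1}, \eqref{eqn:CondEffGapQESRConditions3} or \eqref{eqn:CondEffGapQESRConditionAdditional} renders the block system unsolvable for every nonsingular deterministic $C$, so that the hypothesis of Theorem~\ref{theorem:GeneralEfficientA}\ref{theorem:GeneralEfficientAPart3} holds and the resulting strict Loewner inequality is exactly the asserted efficiency gap.
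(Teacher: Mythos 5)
Your overall strategy is exactly the paper's: reduce to the necessity criterion of Theorem~\ref{theorem:GeneralEfficientA}\ref{theorem:GeneralEfficientAPart3}, identify the M-estimator's instrument matrix as $A_{g_t,\phi_t}(X_t,\theta)$ with the diagonal weights $a_t = g_t'(q_\alpha)+\phi_t'(e_\alpha)/\alpha$ and $b_t=\phi_t''(e_\alpha)$, split the deterministic matrix $C$ into four blocks, and exhaust the resulting block equations with the general-position/eigenvalue lemma built on \ref{cond:QESRLinearIndependentImageValues}. Your handling of statement \ref{statement:QESREfficiencyGap} (composing the two off-diagonal equations to obtain a single-gradient relation, then combining the three scalar constancy conditions to isolate \eqref{eqn:CondEffGapQESRConditions3}, \eqref{eqn:CondEffGapQESRConditions1} and \eqref{eqn:CondEffGapQESRConditionAdditional}) is the paper's argument, and your ``if'' direction of \ref{statement:QESREfficientChoice} — block-scalar $C$ read off the constants, nonsingularity via the $2\times 2$ determinant $(1-\alpha)c_3c_6>0$ — is also correct and matches the paper's explicit construction.

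The genuine gap is in the ``only if'' direction of \ref{statement:QESREfficientChoice}, where you dispose of \eqref{eqn:CondEffGapQESRConditions4} and \eqref{eqn:CondEffGapQESRConditions5} by ``introducing the free function $c_{5,t}$ to split'' the single constraint \eqref{eqn:CondEffGapQESRConditionAdditional}. That is not a relabeling. The block equations constrain \emph{only} the two scalars $a_t$ and $b_t$; they can never, on their own, separate $g_t'$ from $\phi_t'$. Condition \eqref{eqn:CondEffGapQESRConditions5} pins down $\phi_t'\big(e_\alpha(X_t,\theta_0^e)\big)$ \emph{individually} as an affine function of $f_t\big(q_\alpha(X_t,\theta_0^q)\big)$ with the specific slope $c_3/(c_1c_2)$, and this does not follow from \eqref{eqn:CondEffGapQESRConditionAdditional}: one can redistribute any non-constant $\sigma(X_t)$-measurable term between $g_t'$ and $\phi_t'/\alpha$ without changing $a_t$, so your four constancy conditions can hold while \eqref{eqn:CondEffGapQESRConditions5} fails. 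What closes this in the paper is a separate integration argument exploiting the functional link between $\phi_t'$ and $\phi_t''$: conditions \eqref{eqn:CondEffGapQESRConditions1} and \eqref{eqn:CondEffGapQESRConditions3} pin down $\phi_t''\big(e_\alpha\big) = (c_3/c_1)\big(q_\alpha - e_\alpha\big)^{-2}$; since the support of $e_\alpha(X_t,\theta_0^e)$ is a non-degenerate interval, integrating recovers $\phi_t'\big(e_\alpha\big) = \tfrac{c_3/c_1}{q_\alpha - e_\alpha} + \tilde c_{5,t}$ for a constant $\tilde c_{5,t}$, and then \eqref{eqn:CondEffGapQESRConditions2} converts this into \eqref{eqn:CondEffGapQESRConditions5}, with \eqref{eqn:CondEffGapQESRConditions4} obtained by subtracting \eqref{eqn:CondEffGapQESRConditions5} from \eqref{eqn:CondEffGapQESRConditionAdditional}. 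Without that antiderivative step your argument proves attainment of the bound is equivalent to \eqref{eqn:CondEffGapQESRConditions1}, \eqref{eqn:CondEffGapQESRConditions2}, \eqref{eqn:CondEffGapQESRConditions3} plus \eqref{eqn:CondEffGapQESRConditionAdditional}, which is not the statement of the theorem.
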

The general structure of Theorem \ref{theorem:QuantileESRegressionEfficiencyGap}  is similar to Theorem \ref{theorem:DoubleQuantileRegressionEfficiencyBound}: Statement \ref{statement:QESREfficientChoice} provides necessary and sufficient conditions as to when the M-estimation and Z-estimation efficiency bounds coincide, using the additional assumption on the model gradients $\nabla_{\theta^q}  q_\alpha(X_t,\theta_0^q) = \nabla_{\theta^e} e_\alpha(X_t,\theta_0^e)$.
Dispensing with the latter condition, \ref{statement:QESREfficiencyGap} provides necessary conditions only.
Also, the conditions \ref{cond:QESREfficiencyGapSeparatedParameterModel}\,--\,\ref{cond:QESRPositiveStuff}
resemble the conditions  \ref{cond:DQREfficiencyGapSeparatedParameterModel}\,--\,\ref{cond:DQREfficiencyGapPositiveDensity} and are satisfed for a large class of processes and estimators, see the discussion after Theorem \ref{theorem:DoubleQuantileRegressionEfficiencyBound} and in \cite{Patton2019}.

For the remainder for this section, we assume that the gradient condition $\nabla_{\theta^q} q_\alpha(X_t,\theta_0^q) = \nabla_{\theta^e} e_\alpha(X_t,\theta_0^e)$ holds, putting us in the situation of \ref{statement:QESREfficientChoice}.
Then, the core conditions for efficiency of the joint quantile and ES models are given in (\ref{eqn:CondEffGapQESRConditions1})\,--\,(\ref{eqn:CondEffGapQESRConditions5}),
where the conditions (\ref{eqn:CondEffGapQESRConditions1}), (\ref{eqn:CondEffGapQESRConditions2}) only depend on the underlying process and do not involve $g_t$ and $\phi_t$, resembling condition
(\ref{eqn:CondEffGapDQRConditions1}).
These two conditions result from the rather restrictive shape of the class of (strictly) consistent loss functions in (\ref{eqn:QESRGeneralLossFunctions}), see \cite{FisslerZiegel2016} for details.
Section \ref{sec:QESModelExamples} further analyzes the validity of (\ref{eqn:CondEffGapQESRConditions1}),  (\ref{eqn:CondEffGapQESRConditions2}) with results resembling the ones for double quantile models from the previous section.
Given that these conditions hold, efficient M-estimation can be performed by employing suitable choices of $g_t$ and $\phi_t$ satisfying \eqref{eqn:CondEffGapQESRConditions3}\,--\,\eqref{eqn:CondEffGapQESRConditions5}, which are further discussed in Section \ref{sec:QESREfficientEstimation} and which resemble conditions (\ref{eqn:CondEffGapDQRConditions2}) and (\ref{eqn:CondEffGapDQRConditions3}).

Conditions (\ref{eqn:CondEffGapQESRConditions1})\,--\,(\ref{eqn:CondEffGapQESRConditions5}) and \eqref{eqn:CondEffGapQESRConditionAdditional} illustrate the concordance with mean and quantile regression models.
Condition \eqref{eqn:CondEffGapQESRConditionAdditional} (which can be split into \eqref{eqn:CondEffGapQESRConditions4} and \eqref{eqn:CondEffGapQESRConditions5} under the equality of the model gradients) is closely related to the efficient choice for semiparametric quantile models, see \cite{Komunjer2010a, Komunjer2010b}, and Section \ref{sec:DoubleQuantileRegressionModel} of this article.
However, in contrast to classical quantile regression, it is important to notice that given (\ref{eqn:CondEffGapQESRConditions1}) and (\ref{eqn:CondEffGapQESRConditions2}) hold, the choice $g_t(z) = 0$ (resulting from $c_4=0$ and $c_{5,t}=0$)  facilitates efficient estimation through a suitable choice of the function $\phi_t$.
Moreover, condition (\ref{eqn:CondEffGapQESRConditions3}) resembles the classical condition of efficient least squares estimation of \cite{GourierouxMonfortTrognon1984}, where the second derivative of $\phi_t$ is proportional to the reciprocal of the conditional variance.
As ES is a \textit{tail} expectation, one also needs to consider the \emph{tail variance} in (\ref{eqn:CondEffGapQESRConditions3}).


\cite{Barendse2020} considers two-step estimation and a related \textit{two-step efficiency bound} for semiparametric quantile and ES models that we discuss and relate to our results 
in Section \ref{sec:TwoStepEfficiency}.

\subsubsection{Efficient Estimation of Joint Semiparametric Quantile and ES Models}
\label{sec:QESREfficientEstimation}

%

Here, we discuss feasible choices for $g_t$ and $\phi_t$ satisfying (\ref{eqn:CondEffGapQESRConditions3})\,--\,(\ref{eqn:CondEffGapQESRConditions5}) and \ref{cond:QESRPositiveStuff} to facilitate efficient M-estimation for semiparametric joint quantile and ES models based on Theorem \ref{theorem:QuantileESRegressionEfficiencyGap}.
To this end, we assume that (\ref{eqn:CondEffGapQESRConditions1}) and (\ref{eqn:CondEffGapQESRConditions2}) hold for the underlying process and defer a discussion of these conditions to Section \ref{sec:QESModelExamples}.	
An obvious solution satisfying (\ref{eqn:CondEffGapQESRConditions3})\,--\,(\ref{eqn:CondEffGapQESRConditions5}) is
\begin{alignat}{3}
\begin{aligned}
\label{eqn:QESREfficientChoice1}
g_t^\text{eff1}(\xi_1) &= d_1 F_t(\xi_1), \qquad & \text{for } \xi_1 > e_\alpha(X_t, \theta^e_0), \\
\phi_t^\text{eff1}(\xi_2)  &= - d_2\log \big( q_\alpha(X_t, \theta^q_0) - \xi_2 \big) \qquad &  \text{for } \xi_2 < q_\alpha(X_t, \theta^q_0),
\end{aligned}
\end{alignat}
for all $t\in \mathbb{N}$ and for some constants $d_1 \ge 0$ and $d_2 > 0$, which we refer to as the \textit{first pseudo-efficient} choices. 
Motivated by the condition 
\begin{align}
\label{eqn:Phi''Condition}
\phi_t''(e_\alpha(X_t,\theta^e_0))
= c\,\Big(\operatorname{Var}_t\big(  Y_t  \big| Y_t \le q_\alpha(X_t,\theta^q_0) \big)  + (1-\alpha) \big( e_\alpha(X_t,\theta^e_0) - q_\alpha(X_t,\theta^q_0) \big)^{2}\Big)^{-1}
\end{align}
for some $c >0$, given in (\ref{QESREfficiencyGapEquation22}) in the proof of Theorem \ref{theorem:QuantileESRegressionEfficiencyGap} and in the two-step efficiency bound of \cite{Barendse2020}, a \emph{second pseudo-efficient} choice, satisfying (\ref{eqn:CondEffGapQESRConditions3})\,--\,(\ref{eqn:CondEffGapQESRConditions5}), is given by
\begin{align}
\begin{aligned}
\label{eqn:QESREfficientChoice2}
g_t^\text{eff2}(\xi_1) &= 0, \qquad \text{ and } \\
\phi_t^\text{eff2}(\xi_2)  
&=  \frac{ d_3(q_t - \xi_2) }{\sqrt{(1-\alpha)v_t}}  \arctan
\left( \frac{\sqrt{1-\alpha}(q_t - \xi_2)}{\sqrt{v_t}} \right)
+\xi_2\frac{\pi d_3(1+d_4)}{2\sqrt{(1-\alpha)v_t}} \\
& - \frac{d_3}{2(1-\alpha)} \log \big( v_t + (1-\alpha)(q_t - \xi_2)^2 \big), \quad \text{for all} \quad \xi_2 < q_t,
\end{aligned}
\end{align}
for constants $d_3 > 0$, $d_4\ge0$, where $v_t = \operatorname{Var}_t \big(  Y_t  \big| Y_t \le q_\alpha(X_t,\theta^q_0) \big)$ and $q_t = q_\alpha(X_t, \theta^q_0)$.
Then,
\[
{\phi_t'}^\text{eff2}(\xi_2)  
= -\frac{d_3}{\sqrt{(1-\alpha)v_t}} \arctan \left( \frac{\sqrt{1-\alpha}(q_t - \xi_2) }{ \sqrt{v_t}} \right)
+\frac{\pi d_3(1+d_4)}{2\sqrt{(1-\alpha)v_t}}>0,
\] and 
${\phi_t''}^\text{eff2}(\xi_2)  
= {d_3}  \big(v_t + (1-\alpha)(q_t - \xi_2)^2\big)^{-1}>0$, 
for all  $\xi_2 < q_t$.

This illustrates that, given that (\ref{eqn:CondEffGapQESRConditions1}) and (\ref{eqn:CondEffGapQESRConditions2}) hold, there exist different efficient M-estimators.
Furthermore, if (\ref{eqn:CondEffGapQESRConditions1}) and  (\ref{eqn:CondEffGapQESRConditions2}) do not hold jointly, Theorem \ref{theorem:QuantileESRegressionEfficiencyGap} \ref{statement:QESREfficientChoice} cannot be employed for a statement on efficiency of different M-estimators and it is generally unclear which choices of $g_t$ and $\phi_t$ result in the most efficient estimator.
We analyze this numerically for location-scale process with heteroskedastic innovations in the simulation study in Section \ref{sec:QESRSimStudy}.
The results there also suggest that there is an efficiency gap in models with joint parameters.

As it is common for efficient semiparametric estimation (cf.\ \citealp{GourierouxMonfortTrognon1984}, \citealp{Komunjer2010a, Komunjer2010b}), the efficient choice depends on the knowledge of the true parameter vector $\theta_0$ and further unknown quantities such as the 
conditional density $f_t$ evaluated at $q_\alpha(X_t,\theta^q_0)$ or the quantile-truncated variance $\operatorname{Var}_t \big( Y_t \big| Y_t \le q_\alpha(X_t,\theta^q_0) \big)$.
In practice, one usually applies a two-step estimation approach where the unknown quantities in the efficient choices are substituted by consistent estimates.
Notably, the pseudo-efficient M-estimators based on the first choices $g_t(\xi_1)=0$ and $\phi_t(\xi_2)$ in (\ref{eqn:QESREfficientChoice1}) are remarkably feasible in the sense that they only require a first-step estimate of the quantile-specific parameters.
This is considerably easier than the required nonparametric first-step estimators of the conditional variance or the conditional distribution function in efficient M-estimation of mean and quantile regressions. 

A further interesting fact arises from a comparison of (\ref{eqn:QESREfficientChoice1}) to the predominantly used loss functions with homogeneous loss differences of degree zero \citep{NoldeZiegel2017}, given by
\begin{align}
\label{eqn:QESRZeroHomogeneousLoss}
g_t(\xi_1) = 0
\qquad \text{ and } \qquad
\phi_t(\xi_2) = - \log(-\xi_2), \quad \text{for } \xi_2<0.
\end{align}
\cite{Patton2019} build their M-estimation approach on these choices and \cite{DimiBayer2019} numerically show that such M-estimators are relatively efficient.

Comparing the choice $g_t(\xi_1) = 0$ to the efficient choice in (\ref{eqn:QESREfficientChoice1}) illustrates the elegance of the parsimonious choice $d_1=0$.
By further comparing the choices of $\phi_t$ in  (\ref{eqn:QESREfficientChoice1}) and (\ref{eqn:QESRZeroHomogeneousLoss}), we see that the zero-homogeneous loss function only deviates from the pseudo-efficient choice in (\ref{eqn:QESREfficientChoice1}) through the translation by $q_\alpha(X_t, \theta^q_0)$.
This justifies the choice of \cite{Patton2019} ex post and theoretically explains the good numerical performance observed by \cite{BayerDimi2019}.
While the zero-homogeneous choice requires strictly negative values for the conditional ES, employing the closely related efficient choice in (\ref{eqn:QESREfficientChoice1}) makes this condition redundant and instead, we only have to impose the natural condition that the conditional ES is smaller than the conditional quantile. 
	Interestingly, when $d_1=0$, 
	\eqref{eqn:QESREfficientChoice1} also constitutes a strictly consistent loss with zero-homogeneous loss differences, when allowing the (itself 1-homogenous) quantile as an input parameter.
	This does not contradict \cite{NoldeZiegel2017} as they naturally do not allow the true quantile as an input parameter.

\subsubsection{Processes Generating an Efficiency Gap in joint Quantile and ES Models}
\label{sec:QESModelExamples}

In this section, we discuss attainability of the process conditions (\ref{eqn:CondEffGapQESRConditions1}) and (\ref{eqn:CondEffGapQESRConditions2}), which are necessary for the M-estimator to match the Z-estimation efficiency bound under the gradient condition $\nabla_{\theta^q} q_\alpha(X_t,\theta_0^q) = \nabla_{\theta^e} e_\alpha(X_t,\theta_0^e)$.
We consider joint quantile and ES models of the form
\begin{align}
Y_t = q_\alpha(X_t, \theta_0^q) + u_t^q 
\qquad \text{and} \qquad
Y_t = e_\alpha(X_t, \theta_0^e) + u_t^e,
\end{align}
where the innovations $(u_t^q)_{t\in\N}$ and $(u_t^e)_{t\in\N}$ satisfy the semiparametric stationarity conditions $Q_\alpha(u_t^q \,|\, X_t ) = 0$ and $\ES_\alpha(u_t^e \,|\, X_t) = 0$,  such that Assumption \eqref{ass:unique model} is satisfied.

Such correctly specified models can for instance be generated through the process	in \eqref{eqn:GeneralLocScaleDGP}, where we---slightly differently from the residual assumption in \eqref{eqn:GeneralLocScaleDGP}---impose that $z_\alpha = F_{\varepsilon_t}^{-1}(\alpha)$ and $s_\alpha = \ES_\alpha(\varepsilon_t)$ are time-independent, such that Assumption  \eqref{ass:unique model} holds. 
Apart from that, the innovations may be heterogeneously distributed.
We then get that $Q_\alpha(Y_t\,|\,X_t) = \zeta(X_t)+ \eta(X_t) z_\alpha$ and $\ES_\alpha(Y_t\,|\,X_t) = \zeta(X_t) + \eta(X_t) s_\alpha$.
E.g., if $\zeta(X_t)$ and $\eta(X_t)$ are linear in $X_t$, as in the simulation setup in Section \ref{sec:QESRSimStudy}, we also get linear models for $q_\alpha(X_t, \theta_0^q)$ and $e_\alpha(X_t, \theta_0^e)$.

It further holds that
$\operatorname{Var}_t \big( Y_t \,|\, Y_t \le q_\alpha(X_t,\theta^q_0) \big)
= \eta(X_t)^2 \operatorname{Var}_t \big( \varepsilon_t \,|\, \varepsilon_t \le z_\alpha \big)$,
and
$f_t \big( q_\alpha(X_t,\theta_0^q) \big)  
=  f_{\varepsilon_t}(z_\alpha) / \eta(X_t)
=  f_{\varepsilon_t}(z_\alpha)(z_\alpha - s_\alpha)/\big(q_\alpha(X_t,\theta^q_0) - e_\alpha(X_t,\theta^e_0) \big)$.
Thus, for stationary innovations $(\varepsilon_t)_{t\in\N}$, 
the quantities $\operatorname{Var}_t \big( \varepsilon_t  \,|\, \varepsilon_t \le z_\alpha \big)$ and  $f_{\varepsilon_t}(z_\alpha)$ are constant, 
which implies that the conditions (\ref{eqn:CondEffGapQESRConditions1}) and (\ref{eqn:CondEffGapQESRConditions2}) are satisfied, and hence, any M-estimator based on choices for $g_t$ and $\phi_t$ satisfying (\ref{eqn:CondEffGapQESRConditions3})\,--\,(\ref{eqn:CondEffGapQESRConditions5}) attains the Z-estimation efficiency bound.

Similarly to Section \ref{sec:DoubleQuantileRegressionModel}, we can easily construct processes which generate an \textit{efficiency gap} by considering time-varying innovation distributions.
E.g.,  we consider independent and Student's $t$-distributed innovations $\varepsilon_t \sim t_{\nu_t}(\mu_t,\sigma_t^2)$ with time-varying degrees of freedom $\nu_t$ and
\begin{align}
\label{eqn:QESRMutSigmat}
\mu_t =  Q_\alpha(t_{\nu_1}) - \sigma_t Q_\alpha(t_{\nu_t})
\qquad \text{ and }\qquad
\sigma_t  = \frac{Q_\alpha(t_{\nu_1}) - \ES_\alpha(t_{\nu_1}) }{Q_\alpha(t_{\nu_t}) - \ES_\alpha(t_{\nu_t}) }.
\end{align}
The conditions in \eqref{eqn:QESRMutSigmat} are such that the quantile-ES stationarity condition is satisfied.
For this process, it still holds that $\operatorname{Var}_t \big( Y_t\,|\,Y_t \le q_\alpha(X_t,\theta_0) \big)  = \eta(X_t)^2 \operatorname{Var} \big( \varepsilon_t\,|\,\varepsilon_t \le z_\alpha \big)$,
as $\varepsilon_t$ is independent of $X_t$.
However, the quantity $\operatorname{Var} \big( \varepsilon_t\,|\,\varepsilon_t \le z_\alpha\big)$ is generally time-varying, and consequently, this violates (\ref{eqn:CondEffGapQESRConditions1}) and hence generates an efficiency gap.

\section{Numerical Illustration of the Efficiency Gap}
\label{sec:Simulation}

In this section, we numerically illustrate the efficiency gap for double quantile and joint quantile and ES models by approximating the expectations (over the covariates) in (\ref{eqn:AsymptoticNormalityZEstimator})\,--\,(\ref{eqn:AsymptoticCovarianceZEstimator2}) in simulations.
We use 1000 simulation replications each consisting of a sample size of $T=2000$.

\subsection{Double Quantile Models}
\label{sec:DQRSimStudy}

For the double quantile models, we simulate according to the process in (\ref{eqn:GeneralLocScaleDGP}), where
$X_t \stackrel{\textrm{iid}}{\sim}   3 \operatorname{Beta}(3, 1.5)$, $\zeta(X_t) = 10 + 0.5  X_t$, and  $\eta(X_t) = 0.5 + 0.5  X_t$.
For the model innovations $\varepsilon_t$, we choose the following three different specifications:
(a) $\varepsilon_t \stackrel{iid}{\sim}  \mathcal{N}(0,1)$;
(b) $\varepsilon_t \sim t_{\nu_t}(\mu_t,\sigma_t^2)$ with time-varying degrees of freedom, $\nu_t = 3 \times \mathds{1}_{\{t \le T/2\}} + 100 \, \mathds{1}_{\{t > T/2\}}$, where $\mu_t$ and $\sigma_t$ are given in (\ref{eqn:DQRChoiceMuSigma}); and
(c) $\varepsilon_t \sim \mathcal{SN} (\mu_t,\sigma_t^2, \gamma_t)$ follows a skewed normal distribution with time-varying skewness, $\gamma_t = 0.9 \, \mathds{1}_{\{t > T/2\}}$, where $\mu_t$ and $\sigma_t$ are given in (\ref{eqn:DQRSkewedNormalChoiceMuSigma}).
%
%

These choices are motivated through the theoretical considerations of Section \ref{sec:DoubleQuantileRegressionModel}  that for models of the form \eqref{eqn:GeneralLocScaleDGP} with i.i.d.\ residuals, the M-estimator is able to attain the Z-estimation efficiency bound, while conversely, it cannot do so for heterogeneously distributed innovations.
The heterogeneously skewed process in (c) is motivated by symmetric prediction intervals where $\alpha = 1- \beta$.
Empirically, scenario (b) (and similarly for (c)) can be motivated by a breakpoint model for the degree of heavy-tailedness of the innovations: A period of stress (first part of the sample) exhibiting heavy tails is followed by a relatively calm period (second part of the sample), which is resembled by an innovation-distribution with considerably lighter tails.

For the considered processes, it holds that $Q_\alpha(Y_t|X_t)=  (10 + 0.5 z_\alpha) + (0.5  + 0.5z_\alpha) X_t$, and $Q_\beta(Y_t|X_t)=  (10 + 0.5 z_\beta) + (0.5  + 0.5z_\beta) X_t$.
We estimate linear models with \textit{separated} parameters,
\begin{align}
\label{eqn:DQR_Models_SepParameters}
q_\alpha(X_t,\theta) =  \theta^{(1)} + \theta^{(2)} X_t,
\qquad \text{and}  \qquad 
q_\beta(X_t,\theta) =  \theta^{(3)} + \theta^{(4)} X_t.
\end{align}
In order to consider models with \textit{joint} parameters, we use a slightly modified parametrization of the process by using $\eta(X_t) = 0.5  X_t$, which implies that $Q_\alpha(Y_t|X_t)=  10  + (0.5  + 0.5z_\alpha) X_t$, and $Q_\beta(Y_t|X_t)=  10 + (0.5  + 0.5z_\beta ) X_t$.
Hence, we use the (correctly specified) \textit{joint intercept models}
\begin{align}
\label{eqn:DQR_Models_JointParameters}
q_\alpha(X_t,\theta) =  \theta^{(1)} + \theta^{(2)} X_t,
\qquad \text{and}  \qquad 
q_\beta(X_t,\theta) =  \theta^{(1)} + \theta^{(3)} X_t.
\end{align}

Table \ref{tab:DQRSimSD} reports the relative standard deviations of the estimated parameters normalized by the corresponding efficiency bound. The row denoted ``Efficiency Bound'' reports the raw standard deviation.
We consider the probability levels $(\alpha, \beta) \in \big\{ (1\%, 2.5\%), (5\%, 95\%), (1\%, 90\%) \big\}$, where the first choice is important for VaR modeling in risk management, while the remaining two consider estimation of a symmetric and an asymmetric prediction interval.
Panels A-C consider the separated parameter models in (\ref{eqn:DQR_Models_SepParameters}) while Panels D-F consider models with joint parameters in (\ref{eqn:DQR_Models_JointParameters}).
We show results for the joint M-estimator using the general loss function in (\ref{eqn:DoubleQuantileGeneralLossFunctions}) paired with the choices $g_{t}(\xi)  = g_{1,t}(\xi)  = g_{2,t}(\xi)$ given in the first column of Table  \ref{tab:DQRSimSD} together with the Z-estimation efficiency bound.
$F_{\text{Log}}$ denotes the distribution function of a standard logistic distribution.
Tables \ref{tab:DQRSimSDTrue} and \ref{tab:DQRSim_Joint_InterceptSDTrue} show results for additional probability levels.

\begin{table}[tb!]
	\caption{Relative Asymptotic Standard Deviations of Double Quantile Models}
	\label{tab:DQRSimSD}
	\tiny
	\centering
	\begin{tabularx}{\linewidth}{X @{\hspace{0.2cm}} lrrrr @{\hspace{0.2cm}} lrrrr @{\hspace{0.2cm}} lrrrr} 
		\addlinespace
		\toprule
		& & \multicolumn{4}{c}{ (a) Homoskedastic } && \multicolumn{4}{c}{ (b) Heteroskedastic $t$ }  && \multicolumn{4}{c}{ (c) Heteroskedastic $\mathcal{SN}$ }    \\
		\cmidrule(lr){3-6} \cmidrule(lr){8-11} \cmidrule(lr){13-16} 
		$g_t(\xi)$ & & $\theta_1$ & $\theta_2$ & $\theta_3$ & $\theta_4$ & &  $\theta_1$ & $\theta_2$ & $\theta_3$ & $\theta_4$ & & $\theta_1$ & $\theta_2$ & $\theta_3$ & $\theta_4$ \\
		\midrule
		\\
		& &\multicolumn{14}{c}{Panel A: Separated Model Parameters and $(\alpha, \beta) =  (1\%, 2.5\%)$} \\
		\cmidrule(lr){3-16}
		$\xi$ &   &  1.044 &  1.042 &  1.044 &  1.042 &   &  1.075 &  1.094 &  1.057 &  1.063 &   &  1.121 &  1.137 &  1.070 &  1.079\\
		$\exp(\xi)$ &   &  1.010 &  1.016 &  1.001 &  1.002 &   &  1.740 &  2.117 &  1.254 &  1.333 &   &  1.312 &  1.361 &  1.136 &  1.152\\
		$\log(\xi)$ &   &  1.063 &  1.061 &  1.057 &  1.054 &   &  1.102 &  1.128 &  1.070 &  1.078 &   &  1.112 &  1.129 &  1.069 &  1.079\\
		$F_{\text{Log}}(\xi)$ &   &  1.044 &  1.042 &  1.044 &  1.042 &   &  1.075 &  1.095 &  1.056 &  1.063 &   &  1.121 &  1.138 &  1.070 &  1.079\\
		$F_t(\xi)$ &   &  1.000 &  1.000 &  1.000 &  1.000 &   &  1.016 &  1.016 &  1.016 &  1.016 &   &  1.011 &  1.012 &  1.012 &  1.012\\
		\cmidrule(lr){3-16}
		Efficiency Bound  &   & 13.619 &  7.546 &  9.745 &  5.399 &   & 47.871 & 22.289 & 29.466 & 14.409 &   & 10.686 &  4.881 &  9.259 &  4.471\\
		\midrule
		\\
		& &\multicolumn{14}{c}{Panel B: Separated Model Parameters and $(\alpha, \beta) = (5\%, 95\%)$} \\
		\cmidrule(lr){3-16}
		$\xi$ &   &  1.043 &  1.042 &  1.043 &  1.042 &   &  1.070 &  1.068 &  1.070 &  1.068 &   &  1.216 &  1.169 &  1.050 &  1.046\\
		$\exp(\xi)$ &   &  1.004 &  1.003 &  1.837 &  1.681 &   &  1.035 &  1.041 &  2.298 &  2.036 &   &  1.126 &  1.091 &  1.759 &  1.631\\
		$\log(\xi)$ &   &  1.051 &  1.049 &  1.025 &  1.024 &   &  1.090 &  1.087 &  1.048 &  1.047 &   &  1.230 &  1.182 &  1.036 &  1.032\\
		$F_{\text{Log}}(\xi)$  &   &  1.043 &  1.042 &  1.043 &  1.042 &   &  1.070 &  1.068 &  1.070 &  1.068 &   &  1.216 &  1.169 &  1.050 &  1.046\\
		$F_t(\xi)$ &   &  1.000 &  1.000 &  1.000 &  1.000 &   &  1.000 &  1.000 &  1.000 &  1.000 &   &  1.000 &  1.000 &  1.001 &  1.000\\
		\cmidrule(lr){3-16}
		Efficiency Bound  &   &  7.672 &  4.271 &  7.672 &  4.271 &   & 13.706 &  7.507 & 13.706 &  7.507 &   &  4.317 &  2.750 &  9.389 &  5.075\\
		\midrule
		\\
		& &\multicolumn{14}{c}{Panel C: Separated Model Parameters and $(\alpha, \beta) = (1\%, 90\%)$} \\
		\cmidrule(lr){3-16}
		$\xi$ &   &  1.044 &  1.042 &  1.044 &  1.042 &   &  1.209 &  1.174 &  1.030 &  1.030 &   &  1.599 &  1.383 &  1.068 &  1.062\\
		$\exp(\xi)$ &   &  1.010 &  1.016 &  1.657 &  1.547 &   &  1.305 &  1.434 &  1.722 &  1.605 &   &  1.260 &  1.144 &  1.648 &  1.549\\
		$\log(\xi)$  &   &  1.063 &  1.061 &  1.027 &  1.026 &   &  1.388 &  1.331 &  1.016 &  1.016 &   &  1.669 &  1.430 &  1.054 &  1.048\\
		$F_{\text{Log}}(\xi)$ &   &  1.044 &  1.042 &  1.044 &  1.042 &   &  1.202 &  1.167 &  1.030 &  1.030 &   &  1.599 &  1.383 &  1.068 &  1.062\\
		$F_t(\xi)$ &   &  1.000 &  1.000 &  1.000 &  1.000 &   &  1.000 &  1.000 &  1.000 &  1.000 &   &  1.000 &  1.000 &  1.000 &  1.000\\
		\cmidrule(lr){3-16}
		Efficiency Bound  &   & 13.650 &  7.579 &  6.250 &  3.471 &   & 31.097 & 19.012 & 10.885 &  5.856 &   &  5.556 &  4.487 &  7.893 &  4.261\\
		\midrule
		\midrule
		\\
		\\
		& &\multicolumn{14}{c}{Panel D: Joint Model Parameters and $(\alpha, \beta) =  (1\%, 2.5\%)$} \\
		\cmidrule(lr){3-16}
		$\xi$ &   &  1.330 &  1.222 &  1.236 &&   &  1.714 &  1.391 &  1.483 & &  &  1.570 &  1.394 &  1.404\\
		$\exp(\xi)$  &   &  1.122 &  1.062 &  1.080 & &  &  1.517 &  1.452 &  1.437 & &  &  1.475 &  1.345 &  1.346\\
		$\log(\xi)$ &   &  1.366 &  1.249 &  1.261 &&   &  1.812 &  1.472 &  1.556 &&   &  1.589 &  1.409 &  1.419\\
		$F_{\text{Log}}(\xi)$  &   &  1.330 &  1.222 &  1.235 &&   &  1.714 &  1.391 &  1.483 &&   &  1.570 &  1.394 &  1.404\\
		$F_t(\xi)$ &   &  1.003 &  1.002 &  1.002 &&   &  1.047 &  1.029 &  1.035 &&   &  1.003 &  1.005 &  1.005\\
		\cmidrule(lr){3-16}
		Efficiency Bound  &   &  3.757 &  2.880 &  2.564 &&   & 14.649 &  9.672 &  8.459 &&   &  4.991 &  2.690 &  2.673\\
		\midrule
		\\
		& &\multicolumn{14}{c}{Panel E: Joint Model Parameters and $(\alpha, \beta) =  (5\%, 95\%)$} \\
		\cmidrule(lr){3-16}
		$\xi$  &   & 1.300 & 1.190 & 1.190 & &  & 1.303 & 1.204 & 1.204 &&   & 1.562 & 1.362 & 1.228\\
		$\exp(\xi)$ &   & 3.475 & 2.523 & 2.537 & &  & 4.675 & 3.635 & 3.217 &  & & 5.973 & 4.223 & 3.083\\
		$\log(\xi)$  &   & 1.292 & 1.198 & 1.170 & &  & 1.312 & 1.230 & 1.188 &&   & 1.513 & 1.349 & 1.191\\
		$F_{\text{Log}}(\xi)$ &   & 1.300 & 1.190 & 1.190 & &  & 1.303 & 1.204 & 1.204 &  & & 1.562 & 1.362 & 1.228\\
		$F_t(\xi)$ &   & 1.000 & 1.000 & 1.000 & &  & 1.000 & 1.000 & 1.000 & &  & 1.001 & 1.001 & 1.001\\
		\cmidrule(lr){3-16}
		Efficiency Bound &   & 2.173 & 1.648 & 1.648 &  & & 3.841 & 2.903 & 2.903 & &  & 1.339 & 1.098 & 1.465\\
		\midrule
		\\
		& &\multicolumn{14}{c}{Panel F: Joint Model Parameters and $(\alpha, \beta) =  (1\%, 90\%)$} \\
		\cmidrule(lr){3-16}
		$\xi$ &   & 1.269 & 1.108 & 1.200 &&   & 2.329 & 1.372 & 1.666 &&   & 2.445 & 1.726 & 1.560\\
		$\exp(\xi)$ &   & 2.593 & 1.689 & 2.084 &&   & 5.484 & 3.164 & 3.334 &&   & 6.719 & 4.224 & 3.315\\
		$\log(\xi)$  &   & 1.231 & 1.114 & 1.167 &&   & 2.331 & 1.484 & 1.648 &&   & 2.296 & 1.706 & 1.486\\
		$F_{\text{Log}}(\xi)$ &   & 1.269 & 1.108 & 1.200 &&   & 2.328 & 1.371 & 1.666 & &  & 2.445 & 1.726 & 1.560\\
		$F_t(\xi)$ &   & 1.065 & 1.022 & 1.046 &&   & 1.577 & 1.392 & 1.259 &&   & 1.638 & 1.535 & 1.195\\
		\cmidrule(lr){3-16}
		Efficiency Bound  &   & 2.272 & 2.292 & 1.578 &&   & 2.301 & 4.575 & 1.960 &&   & 1.102 & 1.310 & 1.225\\
		\bottomrule
		\addlinespace
		\multicolumn{16}{p{.97\linewidth}}{\scriptsize 
			This table presents the (approximated) asymptotic standard deviations for semiparametric double quantile models at different probability levels in the horizontal panels.
			The rows titled ``Efficiency Bound'' report the raw standard deviations whereas the remaining rows report the relative standard deviations compared to the efficiency bound.
			Panels A-C report results for the models with separated parameters given in 	(\ref{eqn:DQR_Models_SepParameters}) while Panels D-F considers the joint intercept models given in (\ref{eqn:DQR_Models_JointParameters}).
			Results for the three residual distributions described in Section \ref{sec:DQRSimStudy} are reported in the three vertical panels of the table.
			We furthermore consider four classical choices of $g_t(\xi)$ together with the (pseudo-) efficient choice $F_t(\xi)$ and the Z-estimation efficiency bound.
		}
	\end{tabularx}
\end{table}

The numerical results generally confirm the conclusions of Section \ref{sec:DoubleQuantileRegressionModel}: the pseudo-efficient M-estimator with $g_{t}(\xi) = F_t(\xi)$ attains the efficiency bound for homoskedastic innovation distributions, while it cannot attain the efficiency bound for both heteroskedastic processes.
Furthermore, as discussed in Section \ref{sec:DoubleQuantileRegressionModel}, for symmetric quantile levels as in Panel B, the symmetrically heteroskedastic process in (b) is not sufficient for generating an efficiency gap, whereas the asymmetric process in (c) is sufficient.
	The latter claim can be seen by the slightly larger standard deviation of $\theta_3$ in Panel B (c), which is not a numerical artifact as it is supported by our theory in Section \ref{sec:DoubleQuantileRegressionModel}.
Remarkably, even for models with separated parameters, where the pseudo-efficient choices are efficient estimators for the individual quantile models \citep{Komunjer2010a, Komunjer2010b}, the corresponding joint M-estimator does not attain the (joint) efficiency bound for the processes with heteroskedastic innovations, see Panel A and Section \ref{sec:GapSepModels}.

We observe that the gap becomes numerically larger for quantile levels in the tails of the conditional distributions (Panel A) and for quantile levels which are close together.
This makes it particularly relevant in the VaR literature, where VaR is often reported for multiple, extreme quantile levels. 
The first observation can be explained by condition (\ref{eqn:CondEffGapDQRConditions1}). For a numerically large efficiency gap, one requires heterogeneity of the conditional densities at the respective quantiles $f_t\big(q_\alpha(X_t,\theta_0^\alpha)\big)$ and $f_t\big(q_\beta(X_t,\theta_0^\beta)\big)$, which is more common in the tails of the conditional distributions than in their central regions.
The second observation can be explained by noting that the efficiency gap is driven by the non-zero term $\alpha (1-\beta)$ in the off-diagonal entries of $S_t(X_t,\theta_0)$ in (\ref{eqn:DQREfficientSMainText}). It is particularly large for $\alpha \approx \beta \approx 1/2$ and particularly small for $\alpha <\hspace{-0.3em}< \beta$.

Panels D--F in Table \ref{tab:DQRSimSD} present results for the models with a joint intercept parameter.
We find that the general Z-estimation efficiency bound is still valid, which substantiates the statement of Theorem \ref{theorem:GeneralEfficientA}.
Differently from models with separated parameters, the pseudo-efficient choices $g_{t}(\xi) = F_t(\xi)$ generally cannot attain the efficiency bound, even in the homoskedastic residual case, 
which indicates that the efficiency gap applies to an even wider class of processes in joint parameter models.
For both heteroskedastic innovation distributions, the efficiency gap exists and is larger in magnitude.
Furthermore, the efficiency gap becomes substantially larger, especially in the example of Panel F, while the pseudo-efficient choices still result in the most efficient estimator among the considered choices of M-estimators.
These results show that the efficiency gap is present for a large class of double quantile models and data generating processes, which goes beyond the theoretically considered models of Theorem \ref{theorem:DoubleQuantileRegressionEfficiencyBound}.

\subsection{Joint Quantile and ES Models}
\label{sec:QESRSimStudy}

For joint quantile and ES models with separated model parameters, we use the process in (\ref{eqn:GeneralLocScaleDGP}) and utilize parametric choices which result in strictly negative ES values,
$X_t \stackrel{\textrm{iid}}{\sim}  3 \times \operatorname{Beta}(3, 1.5)$, $\zeta(X_t) = -1 -0.5  X_t$, and  $\eta(X_t) = 0.5 + 0.5  X_t$.
For the model innovations $\varepsilon_t$, we choose the following two specifications:
(a) $\varepsilon_t \stackrel{iid}{\sim}  \mathcal{N}(0,1)$; and
(b) $\varepsilon_t \sim t_{\nu_t}(\mu_t,\sigma_t^2)$ with time-varying degrees of freedom, $\nu_t = 3 \, \mathds{1}_{\{t \le T/2\}} + 100 \, \mathds{1}_{\{t > T/2\}}$, where $\mu_t$ and $\sigma_t$ are given in (\ref{eqn:QESRMutSigmat}).
%
These choices are motivated through the theoretical considerations of Section \ref{sec:QuantileESRegressionModel} that for location-scale models with i.i.d.\ residuals, the M-estimator is able to attain the Z-estimation efficiency bound, while conversely, it cannot do so for heterogeneously distributed innovations.
Also recall the empirical motivation of breakpoint models from Section \ref{sec:DQRSimStudy}

For the considered process, it holds that $Q_\alpha(Y_t|X_t)=  (-1 + 0.5z_\alpha) + (0.5z_\alpha  -0.5) X_t$ and $\ES_\alpha(Y_t|X_t) =  (-1 + 0.5 s_\alpha) + (0.5s_\alpha  -0.5) X_t$.
We estimate the following linear models with separated parameters,
\begin{align}
\label{eqn:QESR_Models_SepParameters}
q_\alpha(X_t,\theta) =  \theta^{(1)} + \theta^{(2)} X_t,
\qquad \text{and}  \qquad 
e_\alpha(X_t,\theta) =  \theta^{(3)} + \theta^{(4)} X_t,
\end{align}
which satisfy the conditions of Theorem \ref{theorem:QuantileESRegressionEfficiencyGap}.
We further consider linear models  with \textit{joint} model parameters where the conditions of Theorem \ref{theorem:QuantileESRegressionEfficiencyGap} do not hold in order to assess efficient estimation of quantile--ES models beyond the model classes considered in Theorem \ref{theorem:QuantileESRegressionEfficiencyGap}.
For this, we use a slightly modified parameterisation of the process by using $\eta(X_t) = 0.5  X_t$, which implies that $Q_\alpha(Y_t|X_t)=  -1 + (0.5z_\alpha  -0.5) X_t$ and $\ES_\alpha(Y_t|X_t) =  -1 + (0.5s_\alpha  -0.5) X_t$.
We use the (correctly specified) \textit{joint intercept models}
\begin{align}
\label{eqn:QESR_Models_JointParameters}
q_\alpha(X_t,\theta) =  \theta^{(1)} + \theta^{(2)} X_t,
\quad \text{and}  \quad 
e_\alpha(X_t,\theta) =  \theta^{(1)} + \theta^{(3)} X_t.
\end{align}

We consider the quantile and ES at joint probability levels $\alpha \in \{1\%,  2.5\%, 10\%\}$.
For $g_t$, we use the two (pseudo-efficient) choices $g_t(\xi_1)=0$ and $g_t(\xi_1)=F_t(\xi_1)$ coupled with several choices of $\phi_t$, see the first two columns of Table \ref{tab:QESRSim25Percent} for a detailed list.
The first two choices of $\phi_t$ correspond to sub-optimal choices as already noticed by \cite{DimiBayer2019}, 
whereas the next choice $\phi_t(\xi_2) = -\log(-\xi_2)$ coincides with the ubiquitous zero-homogeneous loss.
The latter two choices $\phi_t^{\text{eff1}}$ and $\phi_t^{\text{eff2}}$ are the pseudo-efficient choices given in  (\ref{eqn:QESREfficientChoice1}) and  (\ref{eqn:QESREfficientChoice2}).

\begin{table}[tb]
	\scriptsize
	\caption{Relative Asymptotic Standard Deviations of Joint Quantile and ES Models}
	\label{tab:QESRSim25Percent}
	\centering
	\begin{tabularx}{\linewidth}{XX @{\hspace{1cm}} lrrrr @{\hspace{0.5cm}} lrrrr}
		\addlinespace
		\toprule
		& & & \multicolumn{4}{c}{(a) Homoskedastic} && \multicolumn{4}{c}{(b) Heteroskedastic}  \\
		\cmidrule(lr){4-7} \cmidrule(lr){9-12} 
		$g_t(\xi_1)$ & $\phi_t(\xi_2)$& & $\theta_1$ & $\theta_2$ & $\theta_3$ & $\theta_4$ & &  $\theta_1$ & $\theta_2$ & $\theta_3$ & $\theta_4$  \\
		\midrule
		\\
		& & & \multicolumn{9}{c}{Panel A: Models with Separated Parameters} \\
		\cmidrule(lr){3-12}	
		$0$ & $\exp(\xi_2)$ &   &  1.232 &  1.386 &  1.100 &  1.163 &   &  2.696 &  4.186 &  1.900 &  2.499\\
		$F_t(\xi_1)$ & $\exp(\xi_2)$ &   &  1.184 &  1.283 &  1.100 &  1.163 &   &  1.782 &  1.866 &  1.900 &  2.499\\
		$0$ & $F_{\text{Log}}(\xi_2)$ &   &  1.220 &  1.367 &  1.088 &  1.145 &   &  2.684 &  4.164 &  1.890 &  2.479\\
		$F_t(\xi_1)$ & $F_{\text{Log}}(\xi_2)$ &   &  1.173 &  1.267 &  1.088 &  1.145 &   &  1.774 &  1.857 &  1.890 &  2.479\\
		$0$ & $-\log(-\xi_2)$ &   &  1.001 &  1.001 &  1.003 &  1.003 &   &  1.160 &  1.162 &  1.212 &  1.167\\
		$F_t(\xi_1)$ & $-\log(-\xi_2)$ &   &  1.001 &  1.001 &  1.003 &  1.003 &   &  1.160 &  1.162 &  1.212 &  1.167\\
		$0$ & $\phi_t^{\text{eff1}}(\xi_2)$ &   &  1.000 &  1.000 &  1.000 &  1.000 &   &  1.166 &  1.168 &  1.211 &  1.166\\
		$F_t(\xi_1)$ & $\phi_t^{\text{eff1}}(\xi_2)$ &   &  1.000 &  1.000 &  1.000 &  1.000 &   &  1.160 &  1.162 &  1.211 &  1.166\\
		$0$ & $\phi_t^{\text{eff2}}(\xi_2)$ &   &  1.000 &  1.000 &  1.000 &  1.000 &   &  1.211 &  1.200 &  1.142 &  1.100\\
		\multicolumn{2}{l}{Barendse Bound}  &   &  1.043 &  1.041 &  1.000 &  1.000 &   &  1.133 &  1.134 &  1.142 &  1.100\\
		\cmidrule(lr){3-12}	
		\multicolumn{2}{l}{Efficiency Bound} &   &  9.942 &  5.476 & 11.907 &  6.559 &   & 31.248 & 15.290 & 59.332 & 33.933\\
		\midrule
		\\
		&  & &\multicolumn{9}{c}{Panel B: Models with Joint Parameters} \\
		\cmidrule(lr){3-12}
		$0$ & $\exp(\xi_2)$ &   &  1.069 &  1.129 &  1.089 &&    &  2.126 &  2.825 &  2.075\\
		$F_t(\xi_1)$ & $\exp(\xi_2)$ &   &  1.060 &  1.107 &  1.081 & &   &  2.021 &  2.366 &  1.967\\
		$0$ & $F_{\text{Log}}(\xi_2)$ &   &  1.070 &  1.117 &  1.083 & &   &  2.103 &  2.737 &  2.025\\
		$F_t(\xi_1)$ & $F_{\text{Log}}(\xi_2)$ &   &  1.061 &  1.096 &  1.075 & &   &  1.995 &  2.287 &  1.916\\
		$0$ & $-\log(-\xi_2)$ &   &  1.098 &  1.070 &  1.062 & &   &  1.695 &  1.414 &  1.322\\
		$F_t(\xi_1)$ & $-\log(-\xi_2)$ &   &  1.095 &  1.068 &  1.061 & &   &  1.691 &  1.412 &  1.321\\
		$0$ & $\phi_t^{\text{eff1}}(\xi_2)$ &   &  1.042 &  1.031 &  1.027 & &   &  1.737 &  1.500 &  1.372\\
		$F_t(\xi_1)$ & $\phi_t^{\text{eff1}}(\xi_2)$ &   &  1.041 &  1.030 &  1.027 & &   &  1.722 &  1.489 &  1.365\\
		$0$ & $\phi_t^{\text{eff2}}(\xi_2)$ &   &  1.022 &  1.016 &  1.014 & &   &  1.773 &  1.488 &  1.344\\
		\cmidrule(lr){3-12}	
		\multicolumn{2}{l}{Efficiency Bound} &   &  3.676 &  2.529 &  2.680 & &   & 12.586 &  8.144 & 11.456\\
		\bottomrule
		\addlinespace
		\multicolumn{12}{p{.97\linewidth}}{
			This table presents the (approximated) relative asymptotic standard deviations for semiparametric joint quantile and ES models at joint probability level of $2.5\%$ for various choices of M-estimators together with the Z-estimation efficiency bound.
			The rows titled ``Efficiency Bound'' report the raw standard deviations whereas the remaining rows report the relative standard deviations compared to the efficiency bound.
			Panel A reports results for the models with separated parameters given in (\ref{eqn:QESR_Models_SepParameters}) while Panel B considers the joint intercept models given in (\ref{eqn:QESR_Models_JointParameters}).
			The two considered residual distributions are presented in the two vertical panels of the table.
			The line ``Barendse Bound'' in Panel A refers to the two-step efficiency bound of \cite{Barendse2020} discussed in Section \ref{sec:TwoStepEfficiency} and is reported here for completeness.
		}
	\end{tabularx}
\end{table}

Panel A of Table \ref{tab:QESRSim25Percent} presents the approximated parameter standard deviations for $\alpha = 2.5\%$ and for the separated parameter models in (\ref{eqn:QESR_Models_SepParameters}).
The results confirm the theoretical considerations of Section \ref{sec:QuantileESRegressionModel}: 
the M-estimator based on either of the pseudo-efficient choices, $\phi_t^\text{eff1}$ and $\phi_t^\text{eff2}$, attains the Z-estimation efficiency bound for location-scale models with homoskedastic innovations, while there is an efficiency gap for heteroskedastic innovation distributions with a magnitude of up to $15\%$.
Table \ref{tab:QESRSimSDTrue} in Section \ref{app:AdditionalTables} reports additional results for $\alpha = 1\%$ and $\alpha = 10\%$, which shows that the efficiency gap is more pronounced for small(er) probability levels, corresponding to the most important cases for the risk measures VaR and ES.

As indicated by (\ref{eqn:QESREfficientChoice1}), our simulation results confirm that, though counterintuitive at first sight, efficient M-estimation in the homoskedastic case can be accomplished by both, the traditional efficient choice of quantile regression, $g_t(\xi_1) = F_t(\xi_1)$, and by the zero-function $g_t(\xi_1) = 0$.
Furthermore, both pseudo-efficient choices $\phi^\text{eff1}$ and $\phi^\text{eff2}$ are able to attain the efficiency bound in the homoskedastic setting for separated parameter models.
However, their performance differs for heteroskedastic models, where the choice $\phi^\text{eff2}$ delivers more efficient ES estimates but at the same time slightly less efficient quantile estimates.
The function $\phi_t(\xi_2) = - \log(-\xi_2)$ performs almost as well as the pseudo-efficient choices throughout all considered designs, which is not surprising given its similar form to $\phi^\text{eff1}$.

Panel B of Table \ref{tab:QESRSim25Percent} presents results for the models with joint parameters given in  (\ref{eqn:QESR_Models_JointParameters}). 
While the Z-estimation efficiency bound is still valid, it cannot be attained by any of the M-estimators utilized in the simulation study, even in the homoskedastic case, for any of the chosen pseudo-efficient choices.
This implies that the efficiency gap for joint quantile and ES models goes beyond the model class considered in Theorem \ref{theorem:QuantileESRegressionEfficiencyGap}.
This holds similarly for the heteroskedastic case, where the efficiency gap becomes quantitatively much larger: 
the standard deviations of the pseudo-efficient choices are between $34\%$ and $77\%$ larger than the efficiency bound.

As in the heteroskedastic case of Panel A, the second pseudo-efficient choice slightly outperforms the first one also for this example of  joint parameter models.
Finally, among the considered M-estimators, the ubiquitous zero-homogeneous choice $g_t(\xi_1)=0$ and $\phi_t(\xi_2) = - \log(-\xi_2)$ performs relatively well and even outperforms both pseudo-efficient choices for the heteroskedastic innovation distributions and the joint parameter models of Panel B. 
This is especially remarkable given that, in contrast to the pseudo-efficient choices, it does not require any pre-estimates in practice.

\section{Conclusion}
\label{sec:Conclusion}

The results of this paper have important consequences.
On the theoretical side, they motivate the consideration of semiparametric M-estimation efficiency bounds, which will generally not coincide with the semiparametric efficiency bound of \cite{Stein1956} for multivariate functionals.
On the practical side, they suggest the use of a new pseudo-efficient and feasible loss function for M-estimation of semiparametric VaR and ES models \citep{Patton2019, Taylor2019}, which recently attain a lot of attraction.
We anticipate that similar results can be derived for multiple expectiles or the interquantile expectation (RVaR) \citep{Barendse2020}.

If interest is particularly on \emph{efficient} estimation for general functionals, our findings suggest the following practical recommendations:
If the M-estimator attains the Z-estimation efficiency bound, it seems advisable to use the former due to its often superior numerical stability, and as \citet{DFZ_CharMest} guarantees its consistency, which can be problematic for (efficient) Z- and GMM-estimation due to missing global identification.
On the other hand, in the presence of an efficiency gap, the efficient Z-estimator is more attractive as its efficiency dominates all M-estimators.
Its possibly lacking \emph{global} identification could be remedied by using a consistent pre-estimate---e.g., from (pseudo-efficient) M-estimation---and restricting the numerical optimization algorithm to a local search around the pre-estimate, and by relying on \emph{local} identification as e.g., suggested by \citet[p.\ 2127]{NeweyMcFadden1994}.


\begin{appendix}
	
\renewcommand{\theass}{A\arabic{ass}}   
\setcounter{ass}{2}

\section{Additional Assumptions}
\label{sec:AddAssumptions}

This section restates assumptions from other papers we use in our theory:
Assumption \eqref{ass:CharMest} combines Assumptions 2 and 3 of \citet{DFZ_CharMest}. 
Assumptions \eqref{ass:GPLclass} and \eqref{ass:FZclass} provide sufficient conditions for the respective characterization results of loss functions for the double quantile and joint quantile and ES models, respectively \citep{FisslerZiegel2016}.

\begin{ass}
	\label{ass:CharMest}
	\begin{enumerate}
	\item[(i)]
	For all random variables $Z=(Y,X) \sim F_Z \in \F_{\Z}$, assume that the map $m(X,\cdot)\colon\Theta\to\Xi$ is surjective almost surely. 
	Moreover, the conditional expectation $\E\big[\rho\big(Y,m(X,\theta)\big)\big|X\big]$ is continuous in $\theta$ almost surely. 
	\item[(ii)]
	For all random variables $Z=(Y,X) \sim F_Z \in \F_{\Z}$ and for any event $A\in \sigma(X)$ with positive probability $\P(A)>0$ the conditional distribution $F_{Z|A}$ is also in $\F_{\Z}$.
	\end{enumerate}
\end{ass}

\begin{ass}
	\label{ass:GPLclass}
	Let $\F_{\Y|\X}$ contain all continuously differentiable distribution functions with positive derivatives (densities) such that the double quantile maps surjectively on $\Xi$, which is assumed to be an open and path connected subset of $\R^2$.
	\end{ass}

\begin{ass}
	\label{ass:FZclass}
	Let $\F_{\Y|\X}$ contain all continuously differentiable distribution functions with positive derivatives (densities) and integrable lower tail such that $(Q_\alpha, \ES_\alpha)$ maps surjectively on $\Xi\subseteq\R^2$, which is assumed to be an open and path connected subset of $\R^2$.
\end{ass}

	
\end{appendix}




\singlespacing
\setlength{\bibsep}{2pt plus 0.3ex}
\def\bibfont{\small}

\bibliographystyle{apalike}

\newpage
\onehalfspacing
\setcounter{page}{1}
\setcounter{footnote}{0}
\begin{center}
	SUPPLEMENTARY MATERIAL FOR   \vspace{10pt} \\
	{\Large\bf {The Efficiency Gap} \vspace{10pt} }\\
	Timo Dimitriadis, Tobias Fissler and Johanna Ziegel\\
	This Version: \today \\ 
\end{center}

\renewcommand{\thesection}{S.\arabic{section}}   
\renewcommand{\thepage}{S.\arabic{page}}  
\renewcommand{\thetable}{S.\arabic{table}}   
\renewcommand{\thefigure}{S.\arabic{figure}}   

\setcounter{section}{0}
\setcounter{table}{0}
\setcounter{figure}{0}

\renewcommand{\theass}{S\arabic{ass}}   
\setcounter{ass}{5}

\section{Proofs for the Results of the Main Paper}
\label{app:Proofs}


\begin{proof}[Proof of Theorem \ref{theorem:GeneralEfficientA}]
	For $A_{t,C}^\ast(X_t,\theta_0) = C D_t(X_t,\theta_0)^\intercal S_t(X_t,\theta_0)^{-1}$, one obtains that
	$\Delta_{T,\mathbb{A}^\ast} = \frac{1}{T} \sum_{t=1}^T C \, \mathbb{E} \left[ D_t(X_t,\theta_0)^\intercal  S_t(X_t,\theta_0)^{-1} D_t(X_t,\theta_0) \right]$
	and $\Sigma_{T,\mathbb{A}^\ast} = \Delta_{T,\mathbb{A}^\ast}C^\intercal$.
	Thus, the asymptotic covariance of the Z-estimator based on the choice $A_{t,C}^\ast(X,\theta_0)$ is the limit of
	\[
	\Delta_{T,\mathbb{A}^\ast}^{-1} \Sigma_{T,\mathbb{A}^\ast}\left(\Delta_{T,\mathbb{A}^\ast}^{-1}\right)^\intercal
	= \Lambda_T^{-1} 
	= \left( \frac{1}{T} \sum_{t=1}^T \mathbb{E} \big[ D_t(X_t,\theta_0)^\intercal S_t(X_t,\theta_0)^{-1} D_t(X_t,\theta_0) \big] \right)^{-1}
	\] 
	for all deterministic and non-singular choices of $C$, which shows part \ref{theorem:GeneralEfficientAPart1} of Theorem \ref{theorem:GeneralEfficientA}.
	
	As the asymptotic covariance is independent of the choice of $C$, without loss of generality we continue with $C = I_q$ for the proof of part \ref{theorem:GeneralEfficientAPart2} and henceforth use the notation $A_{t}^\ast = A_{t, I_q}^\ast$.
	We define the random vector $\chi_{t,T} =  \big( \Delta_{T,\mathbb{A}}^{-1} A_t(X_t,\theta_0) - \Lambda_{T}^{-1} A_{t}^\ast(X_t,\theta_0) \big) \varphi\big(Y_t,m(X_t,\theta_0) \big)$ for all $t, 1\le t \le T, T \ge 1$.
	Straight-forward calculations yield that
	\begin{align*}
		&\frac{1}{T} \sum_{t=1}^T \mathbb{E} \left[ \chi_{t,T} \chi_{t,T}^\intercal \right] = \\
		&\Delta_{T,\mathbb{A}}^{-1} \left( \frac{1}{T} \sum_{t=1}^T \mathbb{E} \left[ A_t(X_t,\theta_0) \varphi\big(Y_t, m(X_t,\theta_0) \big) \varphi\big(Y_t,m(X_t,\theta_0)\big)^\intercal A_t(X_t,\theta_0)^\intercal \right] \right) (\Delta_{T,\mathbb{A}}^\intercal)^{-1} \\
		+ \, &\Lambda_T^{-1} \left( \frac{1}{T} \sum_{t=1}^T \mathbb{E} \left[ A_t^\ast(X_t,\theta_0) \varphi\big(Y_t, m(X_t,\theta_0) \big) \varphi\big(Y_t, m(X_t,\theta_0) \big)^\intercal A_t^\ast(X_t,\theta_0)^\intercal \right] \right) (\Lambda_T^{-1})^\intercal \\
		- \, &\Delta_{T,\mathbb{A}}^{-1} \left( \frac{1}{T} \sum_{t=1}^T \mathbb{E} \left[ A_t(X_t,\theta_0) \varphi\big(Y_t, m(X_t,\theta_0) \big) \varphi\big(Y_t, m(X_t,\theta_0) \big)^\intercal A_t^\ast(X_t,\theta_0)^\intercal \right] \right) (\Lambda_T^{-1})^\intercal \\
		- \, &\Lambda_T^{-1} \left( \frac{1}{T} \sum_{t=1}^T \mathbb{E} \left[ A_t^\ast(X_t,\theta_0) \varphi\big(Y_t, m(X_t,\theta_0) \big) \varphi\big(Y_t, m(X_t,\theta_0) \big)^\intercal A_t(X_t,\theta_0)^\intercal \right] \right) (\Delta_{T,\mathbb{A}}^\intercal)^{-1} \\
		= \, &\Delta_{T,\mathbb{A}}^{-1} \left( \frac{1}{T} \sum_{t=1}^T \mathbb{E} \left[ A_t(X_t,\theta_0) S_t(X_t,\theta_0) A_t(X_t,\theta_0)^\intercal \right] \right) (\Delta_{T,\mathbb{A}}^\intercal)^{-1} \\
		+ \, &\Lambda_T^{-1} \left( \frac{1}{T} \sum_{t=1}^T \mathbb{E} \left[ D_t(X_t,\theta_0)^\intercal S_t(X_t,\theta_0)^{-1} D_t(X_t,\theta_0) \right] \right) \Lambda_T^{-1} \\
		- \, &\Delta_{T,\mathbb{A}}^{-1} \left( \frac{1}{T} \sum_{t=1}^T \mathbb{E} \left[ A_t(X_t,\theta_0) D_t(X_t,\theta_0) \right] \right) \Lambda_T^{-1} \\
		- \, &
		\Lambda_T \left( \frac{1}{T} \sum_{t=1}^T \mathbb{E} \left[ D_t(X_t,\theta_0)^\intercal A_t(X_t,\theta_0)^\intercal \right] \right) (\Delta_{T,\mathbb{A}}^\intercal)^{-1} \\
		= \, &\Delta_{T,\mathbb{A}}^{-1} \Sigma_{T,\mathbb{A}} \Delta_{T,\mathbb{A}}^{-1} - \Lambda_T^{-1},
	\end{align*}
	which is positive semi-definite for all $T \ge 1$ as the sum of outer products, which concludes the proof of part \ref{theorem:GeneralEfficientAPart2}. 
	
	For the proof of part \ref{theorem:GeneralEfficientAPart3}, assume that for some $t = 1,\dots,T$, the matrix $A_t(X_t,\theta)$ is such that $A_t(X_t,\theta_0) \not= A_{t,C}^\ast(X_t,\theta_0)$ for any non-singular and deterministic matrix $C$  with positive probability.
	Then, for some $t = 1,\dots,T$, the matrix
	\(
	M_{T,\mathbb{A}} (X_t,\theta_0) := \Delta_{T,\mathbb{A}}^{-1} A_t(X_t,\theta_0) - \Lambda_{T}^{-1} A_{t,C}^\ast(X_t,\theta_0) 
	\)
	is nonzero with positive probability, as otherwise $A_t(X_t,\theta_0) =  A_{t,  \tilde C}^\ast(X_t,\theta_0)$ almost surely with $\tilde C = \Delta_{T,\mathbb{A}} \Lambda_{T}^{-1} C$.
	This implies that the matrix $M_{T,\mathbb{A}} (X_t,\theta_0)$ has positive rank with positive probability.
	Furthermore, the matrix $S_t(X_t,\theta_0)$ defined in \eqref{eqn:DefSMatrix}
	is positive definite with probability one for all $t = 1,\dots,T$ by assumption.
	Consequently, we can apply the Cholesky decomposition and get that there exists a lower triangular matrix $G_t(X_t,\theta_0)$ with strictly positive diagonal entries such that $S_t(X_t,\theta_0) = G_t(X_t,\theta_0) G_t(X_t,\theta_0)^\intercal$ almost surely, i.e., the matrix $G_t(X_t,\theta_0)$ has full rank almost surely.
	Thus, the matrix
	\(
	B_{T,\mathbb{A},t}(X_t,\theta_0) := M_{T,\mathbb{A}}(X_t,\theta_0) G_t(X_t,\theta_0)
	\)
	has positive rank for some $t = 1,\dots,T$ with positive probability by Sylvester's rank inequality as it is the product of matrices with strictly positive rank (with positive probability) and full rank (almost surely), respectively.
	Consequently, there exists a $j \in \{ 1,\dots,k\}$ such that
	\begin{align}\label{eq:positive prob}
		\mathbb{P} \big( B_{T,\mathbb{A},t}(X_t,\theta_0)^\intercal e_j \not= 0 \big) > 0, \qquad \text{for some }t = 1,\dots,T,
	\end{align}
	where $e_j$ is the $j$-th standard basis vector of $\mathbb{R}^k$.
	Thus, 
	\begin{align*}
		&e_j^\intercal \left( \frac{1}{T} \sum_{t=1}^T \mathbb{E} \left[ \chi_{t,T} \chi_{t,T}^\intercal \right] \right) e_j 
		= \frac{1}{T} \sum_{t=1}^T \mathbb{E} \left[e_j^\intercal  M_{T,\mathbb{A}}(X_t,\theta_0) S_t(X_t,\theta_0) M_{T,\mathbb{A}}(X_t,\theta_0)^\intercal e_j \right] \\
		&= \frac{1}{T} \sum_{t=1}^T \mathbb{E} \left[e_j^\intercal  B_{T,\mathbb{A},t}(X_t,\theta_0) B_{T,\mathbb{A},t}(X_t,\theta_0)^\intercal e_j \right] 
		= \frac{1}{T} \sum_{t=1}^T \mathbb{E} \left[ \left\| B_{T,\mathbb{A},t}(X_t,\theta_0)^\intercal e_j \right\|^2 \right]
		> 0,
	\end{align*}
	for all $T \ge 1$, since all summands are non-negative and, invoking \eqref{eq:positive prob}, at least one summand must be strictly positive,
	which shows that the matrix
	\(
	\Delta_{T,\mathbb{A}}^{-1} \Sigma_{T,\mathbb{A}} \Delta_{T,\mathbb{A}}^{-1} - \Lambda_T^{-1}
	\)
	has at least one strictly positive eigenvalue, which concludes the proof of the theorem.
\end{proof}

\begin{proof}[Proof of Theorem \ref{theorem:DoubleQuantileRegressionEfficiencyBound}]		
	Under Assumptions \eqref{ass:unique model}, \eqref{ass:CharMest} and \eqref{ass:GPLclass}, \citet[Theorem 1]{DFZ_CharMest} and \citet[Proposition 4.2]{FisslerZiegel2016} yield that any consistent M-estimator of semiparametric double quantile models is based on classical (strictly) consistent loss functions for the pair of two quantiles, given in (\ref{eqn:DoubleQuantileGeneralLossFunctions}).	
	Furthermore, the M- and Z-estimator have identical asymptotic covariance if and only if the moment conditions of the Z- and derivative of the loss of the M-estimator coincide, or, respectively, their conditional expectations coincide, see the discussion after \eqref{eqn:DefDMatrix}.
	Thus, in the following we compare whether the derivatives of any strictly consistent loss function given in (\ref{eqn:DoubleQuantileGeneralLossFunctions})  can attain the efficient moment conditions of the Z-estimator almost surely.
	

	We get that all identification functions which correspond to an M-estimator (in the form of a derivative of the conditional expectation almost surely) are given by
	\begin{align*}
			\psi_{g_{1,t},g_{2,t}}(Y_t,X_t,\theta) = 
			\begin{pmatrix}
				\nabla_{\theta^\alpha} q_\alpha(X_t,\theta^\alpha) g_{1,t}'\big(q_\alpha(X_t,\theta^\alpha)\big) \big( \mathds{1}_{\{ Y_t \le q_\alpha(X_t,\theta^\alpha)\}} - \alpha \big) \\
				\nabla_{\theta^\beta} q_\beta(X_t,\theta^\beta) g_{2,t}'\big(q_\beta(X_t,\theta^\beta)\big) \big( \mathds{1}_{\{ Y_t \le q_\beta(X_t,\theta^\beta)\}} - \beta \big)
			\end{pmatrix},
	\end{align*}
	which can be written as
	$\psi_{g_{1,t},g_{2,t}}(Y_t,X_t,\theta) = A_{g_{1,t},g_{2,t}} (X_t,\theta)  \varphi\big(Y_t, m(X_t,\theta) \big)$,
	where
	\begin{align}
		\label{eqn:DQRGeneralInstrumentMatrixMestimator}
		A_{g_{1,t},g_{2,t}} (X_t,\theta) = 		
		\begin{pmatrix}
			\nabla_{\theta^\alpha} q_\alpha(X_t,\theta^\alpha) g_{1,t}'\big(q_\alpha(X_t,\theta^\alpha)\big) & 0 \\
			0 & \nabla_{\theta^\beta} q_\beta(X_t,\theta^\beta) g_{2,t}'\big(q_\beta(X_t,\theta^\beta)\big)
		\end{pmatrix}.
	\end{align}
	We start by showing statement \ref{statement:DQREfficiencyGap}, assuming that the Z-estimation efficiency bound is attained by the M-estimator.
	From Theorem \ref{theorem:GeneralEfficientA}, part \ref{theorem:GeneralEfficientAPart1} and  \ref{theorem:GeneralEfficientAPart2}, we get that the efficient instrument choice is given by $A^\ast_{t,C}(X_t,\theta_0) = C D_t(X_t,\theta_0)^\intercal S_t(X_t,\theta_0)^{-1}$, at the true parameter $\theta_0$, where $C$ is some deterministic and nonsingular matrix and where $D_t(X_t,\theta_0)$ and $S_t(X_t,\theta_0)$ are given in \eqref{eqn:DQREfficientSMainText}.
	Furthermore, Theorem \ref{theorem:GeneralEfficientA} part \ref{theorem:GeneralEfficientAPart3} shows that any choice of $A_t(X_t,\theta_0)$  which deviates from $A^\ast_{t,C}(X_t,\theta_0)$ (at the true parameter $\theta_0$) with positive probability for some $t \in \mathbb{N}$, cannot attain the efficiency bound.
	Thus, in the following we show by contradiction that the general instrument matrix of the M-estimator, $A_{g_{1,t},g_{2,t}}(X_t,\theta_0)$, given in (\ref{eqn:DQRGeneralInstrumentMatrixMestimator}), cannot attain the necessary form $A^\ast_{t,C}(X_t,\theta_0)$ at the true parameter $\theta_0$ with probability one for any deterministic matrix $C$.
	
	For this, we assume that there exists a deterministic and non-singular $q \times q$ matrix $C$ and functions $g_{1,t}$ and $g_{2,t}$ such that $A^\ast_{t,C}(X_t,\theta_0) = A_{g_{1,t},g_{2,t}}(X_t,\theta_0)$ almost surely for all $t \in \mathbb{N}$.
	We split $C = \left( \begin{smallmatrix} C_{11} & C_{12} \\ C_{21} & C_{22} \end{smallmatrix} \right)$ in its respective parts, where $C_{11} \in \mathbb{R}^{q_1 \times q_1}$, $C_{22} \in \mathbb{R}^{q_2 \times q_2}$, and $C_{12}, C_{21}^\intercal \in \mathbb{R}^{q_1 \times q_2}$.
	Then, the equation $A^\ast_{t,C}(X_t,\theta_0) = A_{g_{1,t},g_{2,t}}(X_t,\theta_0)$ is equivalent to
	\begin{align}
		\begin{aligned}
			\label{eqn:DQRModelAMatrixEquality}
			&\begin{pmatrix}
				\alpha (1-\alpha) g_{1,t}'\big(q_\alpha(X_t,\theta_0^\alpha)\big) \nabla_{\theta^\alpha} q_\alpha(X_t,\theta_0^\alpha)  &
				\alpha (1-\beta) g_{1,t}'\big(q_\alpha(X_t,\theta_0^\alpha)\big) \nabla_{\theta^\alpha} q_\alpha(X_t,\theta_0^\alpha) \\
				\alpha (1-\beta) g_{2,t}'\big(q_\beta(X_t,\theta_0^\beta)\big) \nabla_{\theta^\beta} q_\beta(X_t,\theta_0^\beta) &
				\beta (1-\beta) g_{2,t}'\big(q_\beta(X_t,\theta_0^\beta)\big) \nabla_{\theta^\beta} q_\beta(X_t,\theta_0^\beta) 
			\end{pmatrix} \\
			& = \begin{pmatrix}
				f_t\big(q_\alpha(X_t,\theta_0^\alpha)\big)  C_{11} \nabla_{\theta^\alpha} q_\alpha(X_t,\theta_0^\alpha)  &
				f_t\big(q_\beta(X_t,\theta_0^\beta)\big)  C_{12} \nabla_{\theta^\beta} q_\beta(X_t,\theta_0^\beta) \\
				f_t\big(q_\alpha(X_t,\theta_0^\alpha)\big)  C_{21} \nabla_{\theta^\alpha} q_\alpha(X_t,\theta_0^\alpha) &
				f_t\big(q_\beta(X_t,\theta_0^\beta)\big)  C_{22} \nabla_{\theta^\beta} q_\beta(X_t,\theta_0^\beta) 
			\end{pmatrix},
		\end{aligned}
	\end{align}
	which must hold element-wise for all four sub-components.
	Equality of the upper left component yields that there is some $A\in \mathcal A$ with $\P(A)=1$ such that 
	\begin{align}
		\label{eqn:RandomeigenvalueEquation}
		\xi_t(\omega) \cdot \nabla_{\theta^\alpha} q_\alpha\big(X_t(\omega),\theta_0^\alpha\big) =  C_{11} \cdot \nabla_{\theta^\alpha} q_\alpha\big(X_t(\omega),\theta_0^\alpha\big), \quad \forall \omega\in A
	\end{align}
	for the scalar random variable $\xi_t := \alpha (1-\alpha) g_{1,t}'\big(q_\alpha(X_t,\theta_0^\alpha)\big)/f_t\big(q_\alpha(X_t,\theta_0^\alpha)\big)$.
	Equation (\ref{eqn:RandomeigenvalueEquation}) is an eigenvalue problem for the deterministic matrix $C_{11}$ with stochastic eigenvalues $\xi_t(\omega)$  and eigenvectors $ \nabla_{\theta^\alpha} q_\alpha(X_t(\omega),\theta_0^\alpha)$, $\omega\in A$. 
	We now show that this equation only holds if $\xi_t$ is constant on $A$.
	
	By Assumption \ref{cond:LinearIndependentImageValues}, there are $\omega_1, \ldots, \omega_{q_1+1}\in A$ such that for $v_\ell:= \nabla_{\theta^\alpha} q_\alpha\big(X_t(\omega_\ell),\theta_0^\alpha\big)$, $\ell\in\{1, \ldots, q_1+1\}$, any subset of cardinality $q_1$ of $\{v_1, \ldots, v_{q_1+1}\}$ is linearly independent. 
	As $C_{11}$ is a deterministic $q_1 \times q_1$ matrix, it can have at most $q_1$ different eigenvalues.
	Let $\lambda_1, \dots, \lambda_{q_1}$ be the eigenvalues of $C_{11}$ (not necessarily different, thus counted multiple times for higher algebraic multiplicities) ordered such that $v_i$ is an eigenvector for eigenvalue $\lambda_i$ for all $i = 1,\dots,q_1$.
	Invoking that $v_1,\dots,v_{q_1}$ are linearly independent, it holds that
		$
		\sum_{ \lambda \in \{ \lambda_1,\dots,\lambda_{q_1} \} } \operatorname{dim}(E_\lambda) = q_1,
		$
	where the summation ignores repetitions in the set $\{ \lambda_1,\dots,\lambda_{q_1} \}$ and where $E_\lambda$ denotes the eigenspace corresponding to eigenvalue $\lambda$.
	The eigenvector $v_{q_1+1}$ must be contained in $E_{\lambda_i}$ for some $i=1,\dots,q_1$ as otherwise, the sum of the geometric multiplicities would exceed $q_1$.
	If $\operatorname{dim}(E_{\lambda_i}) = l < q_1$, then $E_{\lambda_i}$ is spanned by $l$ elements of $\{ v_1,\dots,v_{q_1}\}$, and as $v_{q_1+1}$ is contained in $E_{\lambda_i}$, these $l$ elements of $\{ v_1,\dots,v_{q_1}\}$ then must be linearly dependent together with $v_{q_1+1}$.
	This contradicts Assumption \ref{cond:LinearIndependentImageValues}.
	Thus, $\operatorname{dim}(E_{\lambda_i}) = q_1$ and consequently, the geometric multiplicity of $\lambda_i$ is $q_1$, which then must equal the algebraic multiplicity. 
	Hence, all eigenvalues of $C_{11}$ are equal, $\lambda_1 = \cdots = \lambda_{q_1}$, and consequently, $\xi_t$ is constant on $A$, implying that it is constant almost surely.
	This implies that $g_{1,t}'\big(q_\alpha(X_t,\theta_0^\alpha) \big) = c_2 f_t\big(q_\alpha(X_t,\theta_0^\alpha)\big)$ almost surely for some constant $c_2 > 0$ and for all $t \in \mathbb{N}$, i.e., (\ref{eqn:CondEffGapDQRConditions2}).
	An analogous proof for the lower right entry of (\ref{eqn:DQRModelAMatrixEquality}) shows (\ref{eqn:CondEffGapDQRConditions3}), which concludes the proof of \ref{statement:DQREfficiencyGap}.
	
	For \ref{statement:DQREfficientChoice} we start with the ``only if'' direction assuming that the M-estimator attains the efficiency bound. 
	From part \ref{statement:DQREfficiencyGap}, we already obtain that 
	(\ref{eqn:CondEffGapDQRConditions2}) and (\ref{eqn:CondEffGapDQRConditions3}) must hold. 
	Exploiting 
	$\nabla_{\theta^\alpha} q_\alpha(X_t,\theta_0^\alpha)  = \nabla_{\theta^\beta} q_\beta(X_t,\theta_0^\beta)$ and $g_{1,t}'\big(q_\alpha(X_t,\theta_0^\alpha)\big) = c_2 f_t\big(q_\alpha(X_t,\theta_0^\alpha)\big)$, the upper right component of (\ref{eqn:DQRModelAMatrixEquality}) implies that
	\begin{align}
		\label{eqn:RandomeigenvalueEquationOffDiagonal}
		\frac{ \alpha (1-\beta) c_2 f_t\big(q_\alpha(X_t,\theta_0^\alpha)\big)}{f_t\big(q_\beta(X_t,\theta_0^\beta)\big)} \cdot \nabla_{\theta^\alpha}  q_\alpha(X_t,\theta_0^\alpha) =  C_{12} \cdot \nabla_{\theta^\alpha}  q_\alpha(X_t,\theta_0^\alpha),  
	\end{align}
	almost surely.
	Applying the same eigenvalue argument to (\ref{eqn:RandomeigenvalueEquationOffDiagonal}) (recalling that $\nabla_{\theta^\alpha} q_\alpha(X_t,\theta_0^\alpha)  = \nabla_{\theta^\beta} q_\beta(X_t,\theta_0^\beta)$ implies that $q_1=q_2$ such that $C_{12}$ is quadratic) yields (\ref{eqn:CondEffGapDQRConditions1}).	
	
	
	For the ``if'' implication in \ref{statement:DQREfficientChoice}, we assume that (\ref{eqn:CondEffGapDQRConditions1}), (\ref{eqn:CondEffGapDQRConditions2}) and (\ref{eqn:CondEffGapDQRConditions3}) hold.
	We choose  $C_{11} = \alpha (1-\alpha )c_2 I_{q_1 \times q_1}$, $C_{12} = \alpha (1-\beta) c_1 c_2 I_{q_1 \times q_2}$,  $C_{21} = \alpha (1-\beta) c_3/c_1 I_{q_2 \times q_1}$ and $C_{22} = \beta (1-\beta) c_3 I_{q_2 \times q_2}$, where $\operatorname{det}(C) \not= 0$ follows from $0<\alpha<\beta<1$.
	Thus, straightforward calculations yield that $A_{g_{1,t},g_{2,t}} (X_t,\theta_0) = A^\ast_{t,C}(X_t,\theta_0)$ holds almost surely for all $t \in \mathbb{N}$.
	Applying Theorem \ref{theorem:GeneralEfficientA}
	yields the claim.
\end{proof}

\begin{proof}[Proof of Theorem \ref{theorem:QuantileESRegressionEfficiencyGap}]
	This proof follows the general ideas of the proof of Theorem \ref{theorem:DoubleQuantileRegressionEfficiencyBound}.
	Under Assumptions \eqref{ass:unique model}, \eqref{ass:CharMest} and \eqref{ass:FZclass}, \citet[Theorem 1]{DFZ_CharMest}  and \citet[Theorem 5.2, Corollary 5.5]{FisslerZiegel2016} yield that any consistent M-estimator of joint quantile and ES models is based on classical (strictly) consistent loss functions given in (\ref{eqn:QESRGeneralLossFunctions}).
	Thus, in the following we analyze whether the derivatives of any consistent loss function are able to match the efficient moment conditions of Theorem \ref{theorem:GeneralEfficientA} almost surely.
	
	We get that all identification functions which correspond to an M-estimator (in the form of a derivative almost surely) are given by $\psi_{g_t,\phi_t} \big( Y_t, X_t, \theta \big)$ equalling
			\[
			\begin{pmatrix}
				\nabla_{\theta^q} q_\alpha(X_t,\theta^q) \Big( g_t'\big(q_\alpha(X_t,\theta^q)\big) + \phi_t'\big(e_\alpha(X_t,\theta^e)\big)/\alpha \Big)  \left( \mathds{1}_{\{Y_t \le q_\alpha(X_t,\theta^q)\}} - \alpha \right)  \\[0.3em]
				\nabla_{\theta^e} e_\alpha(X_t,\theta^e) \phi_t''\big(e_\alpha(X_t,\theta^e)\big) 
				\left( e_\alpha(X_t,\theta^e) - q_\alpha(X_t,\theta^q) + \frac{1}{\alpha}(q_\alpha(X_t,\theta^q) - Y_t) \mathds{1}_{\{Y_t \le q_\alpha(X_t,\theta^q)\}}  \right)
			\end{pmatrix}.
		\]
		This implies that the moment conditions corresponding to an M-estimator can be written as $\psi_{g_t,\phi_t} \big( Y_t, X_t, \theta \big) =	A_{g_t,\phi_t}(X_t,\theta)  \varphi \big( Y_t, m(X_t,\theta) \big)$, where $\varphi\big( Y_t, m(X_t,\theta) \big)$ is given in (\ref{eqn:QESIdFunction}), and $A_{g_t,\phi_t}(X_t,\theta)$ is a diagonal matrix with entries 
		$\Big( g_t'\big(q_\alpha(X_t,\theta^q)\big) + \phi_t'\big(e_\alpha(X_t,\theta^e)\big)/\alpha \Big) 
		\times \nabla_{\theta^q} q_\alpha(X_t,\theta^q)$
		and 
		$ \phi_t''\big(e_\alpha(X_t,\theta^e)\big) \nabla_{\theta^e} e_\alpha(X_t,\theta^e)$.
		To show \ref{statement:QESREfficiencyGap}, we assume that the Z-estimation efficiency bound is attained by the M-estimator.
		From Theorem \ref{theorem:GeneralEfficientA}, we get that the efficient estimator has to fulfill the condition $A^\ast_{t,C}(X_t,\theta_0) = C D_t(X_t,\theta_0)^\intercal S_t(X_t,\theta_0)^{-1}$ for some deterministic and nonsingular matrix $C$, where $D_t(X_t,\theta_0)$ and $S_t(X_t,\theta_0)$ are given in \eqref{eqn:QESREfficientMatrixD} and \eqref{eqn:QESREfficientMatrixS}.
		%
		Thus, we verify whether there exists a deterministic and non-singular $q \times q$ matrix $C$ (and appropriate functions $g_t$ and $\phi_t$) such that $A^\ast_{t,C}(X_t,\theta_0) = A_{g_t,\phi_t}(X_t,\theta_0)$ almost surely, i.e., whether $C D_t(X_t,\theta_0)^\intercal = A_{g_t,\phi_t}(X_t,\theta_0) S_t(X_t,\theta_0)$ holds almost surely.
		By splitting the matrix $C = \begin{pmatrix} C_{11} & C_{12} \\ C_{21} & C_{22} \end{pmatrix}$ in its respective parts, where $C_{11} \in \mathbb{R}^{q_1 \times q_1}$, $C_{22} \in \mathbb{R}^{q_2 \times q_2}$, and $C_{12}, C_{21}^\intercal \in \mathbb{R}^{q_1 \times q_2}$, this simplifies to the following four equalities,
		\begin{align}
			\label{QESREfficiencyGapEquation11}
			C_{11} \nabla_{\theta^q} q_\alpha(X_t,\theta^q_0) = (1-\alpha)  \frac{\alpha g_t'\big(q_\alpha(X_t,\theta^q_0)\big) + \phi_t'\big(e_\alpha(X_t,\theta^e_0)\big)}{ f_t\big(q_\alpha(X_t,\theta^q_0)\big)}  \nabla_{\theta^q} q_\alpha(X_t,\theta^q_0), 
			\\
			\begin{aligned}
				\label{QESREfficiencyGapEquation12}
				C_{12}	\nabla_{\theta^e} e_\alpha(X_t,\theta^e_0)  &= (1-\alpha)  \big( q_\alpha(X_t,\theta^q_0) - e_\alpha(X_t,\theta^e_0) \big) \\
				&\qquad \times \left(  g_t'\big(q_\alpha(X_t,\theta^q_0)\big) + \phi_t'\big(e_\alpha(X_t,\theta^e_0)\big)/\alpha \right) \nabla_{\theta^q} q_\alpha(X_t,\theta^q_0),
			\end{aligned}
			\\
			\label{QESREfficiencyGapEquation21}
			C_{21} \nabla_{\theta^q} q_\alpha(X_t,\theta^q_0) = \frac{(1-\alpha) \big( q_\alpha(X_t,\theta^q_0) - e_\alpha(X_t,\theta^e_0) \big) \phi_t''\big(e_\alpha(X_t,\theta^e_0)\big) }{f_t\big(q_\alpha(X,\theta^q_0)\big) } \nabla_{\theta^e} e_\alpha(X_t,\theta^e_0),
			\\
			\begin{aligned}
				\label{QESREfficiencyGapEquation22}
				&C_{22} \nabla_{\theta^e} e_\alpha(X_t,\theta^e_0) = \phi_t''\big(e_\alpha(X_t,\theta^e_0)\big) \nabla_{\theta^e} e_\alpha(X_t,\theta^e_0) \\
				&\quad \times
				\left( \frac{1}{\alpha} \operatorname{Var}_t \big(  Y_t  \big| Y_t \le q_\alpha(X_t,\theta^q_0) \big) 
				+ \frac{1-\alpha}{\alpha} \big( e_\alpha(X_t,\theta^e_0) - q_\alpha(X_t,\theta^q_0) \big)^2\right),
			\end{aligned}
		\end{align}
		which have to hold almost surely.
		Using the same eigenvalue argument as in the proof of Theorem \ref{theorem:DoubleQuantileRegressionEfficiencyBound}, equation (\ref{QESREfficiencyGapEquation11}) implies that 
		\begin{align}
			\label{QESREfficiencyGapEquation11Scalar}
			(1 - \alpha) \big( \alpha g_t'\big(q_\alpha(X_t,\theta^q_0)\big) + \phi_t'\big(e_\alpha(X_t,\theta^e_0)\big) \big) = \tilde c_1 f_t\big(q_\alpha(X_t,\theta^q_0)\big)
		\end{align}
		almost surely for some constant $\tilde c_1 > 0$.
		Equation (\ref{eqn:CondEffGapQESRConditionAdditional}) follows by setting $c_6 = \tilde c_1/(\alpha(1-\alpha))$.
		Similarly, (\ref{QESREfficiencyGapEquation22}) implies that 
		\begin{align}
			\label{QESREfficiencyGapEquation22Scalar}
			\frac{\tilde c_2}{\phi_t''\big(e_\alpha(X_t,\theta^e_0)\big)} = \frac{1}{\alpha} \operatorname{Var}_t \big(  Y_t  \big| Y_t \le q_\alpha(X_t,\theta^q_0) \big) 
			+ \frac{1-\alpha}{\alpha} \big( e_\alpha(X_t,\theta^e_0) - q_\alpha(X_t,\theta^q_0) \big)^2
		\end{align}
		almost surely for some constant $\tilde c_2 >0$.
		Furthermore, combining (\ref{QESREfficiencyGapEquation12}) and (\ref{QESREfficiencyGapEquation21}) implies
		\begin{align*}
				&C_{12} C_{21} \nabla q_\alpha(X_t,\theta^q_0) = \nabla q_\alpha(X_t,\theta^q_0) (1-\alpha)^2 \big/ \alpha \\
				&\times \frac{\big( q_\alpha(X_t,\theta^q_0) - e_\alpha(X_t,\theta^e_0) \big)^2 \phi_t''\big(e_\alpha(X_t,\theta^e_0)\big) \left( \alpha g_t'\big(q_\alpha(X_t,\theta^q_0)\big) + \phi_t'\big(e_\alpha(X_t,\theta^e_0)\big) \right)}{ f_t\big(q_\alpha(X_t,\theta^q_0)\big)} 
		\end{align*}
		almost surely and employing the same eigenvalue argument again yields that
		\begin{align}
			\begin{aligned}
				\label{QESREfficiencyGapEquation1221Scalar}
				&\big( q_\alpha(X_t,\theta^q_0) - e_\alpha(X_t,\theta^e_0) \big)^2 \phi_t''\big(e_\alpha(X_t,\theta^e_0)\big) \left( \alpha g_t'\big(q_\alpha(X_t,\theta^q_0)\big) + \phi_t'\big(e_\alpha(X_t,\theta^e_0)\big) \right) \\
				&= \frac{\tilde c_3 \alpha}{(1-\alpha)^2} f_t\big(q_\alpha(X_t,\theta^q_0)\big)
			\end{aligned}
		\end{align}
		almost surely for some constant $\tilde c_3 > 0$.
		Substituting (\ref{QESREfficiencyGapEquation11Scalar}) and (\ref{QESREfficiencyGapEquation22Scalar}) into (\ref{QESREfficiencyGapEquation1221Scalar}) finally yields
		\begin{align}
			\label{QESREfficiencyGapEquationSubstituted1Scalar}
			\operatorname{Var}_t \big(  Y_t  \big| Y_t \le q_\alpha(X_t,\theta^q_0) \big) 
			= (1-\alpha) \left( \frac{ \tilde c_1 \tilde c_2}{ \tilde c_3} - 1 \right) \big( q_\alpha(X_t,\theta^q_0) - e_\alpha(X_t,\theta^e_0) \big)^2.
		\end{align}
		almost surely.
		By defining the constant $c_1 :=  (1-\alpha) \left( \frac{ \tilde c_1 \tilde c_2}{ \tilde c_3} - 1 \right)$, we obtain (\ref{eqn:CondEffGapQESRConditions1}), where the positivity of $c_1$ follows from that fact that both sides of \eqref{QESREfficiencyGapEquationSubstituted1Scalar} are positive.
		Substituting (\ref{QESREfficiencyGapEquationSubstituted1Scalar}) into (\ref{QESREfficiencyGapEquation22Scalar}) yields (\ref{eqn:CondEffGapQESRConditions3}), which concludes the proof of statement \ref{statement:QESREfficiencyGap}.
		
		For \ref{statement:QESREfficientChoice} we start with the ``only if'' direction, assuming that the M-estimator attains the efficiency bound.
		From part \ref{statement:QESREfficiencyGap} we obtain that \eqref{eqn:CondEffGapQESRConditions1},  \eqref{eqn:CondEffGapQESRConditions3} and \eqref{eqn:CondEffGapQESRConditionAdditional} must hold.
		Employing the same eigenvalue argument as before, we obtain from (\ref{QESREfficiencyGapEquation21}) that 
		$
		\tilde c_4 f_t\big(q_\alpha(X_t,\theta^q_0)\big)/(1-\alpha) = \big(q_\alpha(X_t,\theta^q_0) - e_\alpha(X_t,\theta^e_0) \big)\phi_t''\big(e_\alpha(X_t,\theta^e_0)\big)
		$
		for some constant $\tilde c_4>0$, where we additionally exploited  that $\nabla_{\theta^q} q_\alpha(X_t,\theta_0^q) = \nabla_{\theta^e} e_\alpha(X_t,\theta_0^e)$ almost surely.
		Combining \eqref{eqn:CondEffGapQESRConditions1} and  \eqref{eqn:CondEffGapQESRConditions3} yields $\phi_t''\big(e_\alpha(X_t,\theta^e_0)\big) = c_3/c_1 \big( q_\alpha(X_t,\theta^q_0) - e_\alpha(X_t,\theta^e_0) \big)^{-2}$, which in turn leads us to
		\begin{align}
			\label{eqn:QESREfficiencyGapFtCondition}
			f_t\big(q_\alpha(X_t,\theta^q_0)\big) = \frac{c_2}{q_\alpha(X_t,\theta^q_0) - e_\alpha(X_t,\theta^e_0)},
		\end{align}
		almost surely for where $c_2 =( 1-\alpha) c_3/(c_1 \tilde c_4) > 0$, establishing \eqref{eqn:CondEffGapQESRConditions2}.
		Using again that $\phi_t''\big(e_\alpha(X_t,\theta^e_0)\big) = c_3/c_1 \big( q_\alpha(X_t,\theta^q_0) - e_\alpha(X_t,\theta^e_0) \big)^{-2}$ and since the support of $e_\alpha(X_t,\theta^e_0)$ is a non-degenerate interval by assumption, it must hold that
		\begin{align}
			\label{eqn:PhiPRime}
			\phi_t'\big(e_\alpha(X_t,\theta^e_0)\big) = \frac{c_3/c_1}{ q_\alpha(X_t,\theta^q_0) - e_\alpha(X_t,\theta^e_0)} + \tilde c_{5,t},
		\end{align}
		almost surely for some deterministic, but possibly time-varying constant $\tilde c_{5,t} \in \mathbb{R}$ for all $t \in \mathbb{N}$.
		Combining (\ref{QESREfficiencyGapEquation11Scalar}), (\ref{eqn:QESREfficiencyGapFtCondition}) and (\ref{eqn:PhiPRime})
		yields that
		\begin{align}
			\label{eqn:QESREfficiencyGapG'Condition}
			g_t'\big(q_\alpha(X_t,\theta^q_0)\big) = c_4	f_t\big(q_\alpha(X_t,\theta^q_0)\big) + c_{5,t},
		\end{align}
		\color{black}
		where $c_4 :=  \left( \frac{\tilde c_1}{\alpha (1 - \alpha) } - \frac{c_3}{\alpha c_1 c_2} \right) \in \mathbb{R}$ and $c_{5,t} := - \tilde c_{5,t}/\alpha$, which establishes \eqref{eqn:CondEffGapQESRConditions4}.
		Eventually, employing (\ref{QESREfficiencyGapEquation11Scalar}) and (\ref{eqn:QESREfficiencyGapG'Condition}) yields that
		$
		\phi_t'\big(e_\alpha(X_t,\theta^e_0)\big)/\alpha = \tilde c_1f_t\big(q_\alpha(X_t,\theta^q_0)\big)/(\alpha (1-\alpha)) - c_4 f_t\big(q_\alpha(X_t,\theta^q_0)\big) - c_{5,t},
		$
		and hence
		$\phi_t'\big(e_\alpha(X_t,\theta^e_0)\big) = c_3 f_t\big(q_\alpha(X_t,\theta^q_0)\big)/(c_1 c_2) - \alpha c_{5,t},
		$ 
		which shows \eqref{eqn:CondEffGapQESRConditions5} and concludes this direction.
		
		
		For the ``if'' implication in statement \ref{statement:QESREfficientChoice}, we assume the conditions (\ref{eqn:CondEffGapQESRConditions1})\,--\,(\ref{eqn:CondEffGapQESRConditions5}).
		%
		Choosing
		$C_{11} =  \alpha (1-\alpha) \left( c_4 + \frac{c_3}{\alpha c_1 c_2} \right) I_{q_1 \times q_1}$, 
		$C_{12} =  \frac{(1-\alpha)}{\alpha c_1 c_3^2} \big( \alpha c_1 c_2 c_4 + c_3 \big) I_{q_1 \times q_2}$,  
		$C_{21} = \frac{(1-\alpha) c_3}{c_1 c_2} I_{q_2 \times q_1}$ and 
		$C_{22} = \frac{c_1 + 1 - \alpha}{\alpha c_1 c_3}  I_{q_2 \times q_2}$, 
		automatically yields $\operatorname{det}(C) \not= 0$, and straight-forward calculations yield that (\ref{QESREfficiencyGapEquation11})\,--\,(\ref{QESREfficiencyGapEquation22}) are satisfied and thus, the identity $A_{g_t, \phi_t} (X_t,\theta_0) = A^\ast_{t,C}(X_t,\theta_0)$ holds almost surely for all $t \in \mathbb{N}$.
		Applying Theorem \ref{theorem:GeneralEfficientA} yields the claim.
	\end{proof}

\section{Details on the connection between loss and identification functions}
\label{sec:Details}

\subsection{Multivariate functionals}

Section \ref{sec:LossIdFunctions} in the main paper provided the arguments why, roughly speaking, there are ``more'' strict identification functions than strictly consistent loss functions for multivariate functionals.
Interestingly,
in extreme cases, e.g., for prediction intervals symmetric around the mean or the median, multivariate functionals may admit strict identification functions, but may even fail to be elicitable at all, due to the said integrability conditions \cite[Proposition 4.12]{FisslerHlavinovaRudloff2019Theory}.
By virtue of the arguments \cite{DFZ_CharMest}, this gap in turn induces a gap between the classes of consistent M- and Z-estimators, which is illustrated by the following Remark \ref{exmp:derivative loss}. 
Note that in order to use Osband's principle in Remark \ref{exmp:derivative loss}, and in line with the discussion in \citet[Chapter 7]{NeweyMcFadden1994}, it is sufficient to assume that the conditional expectation $\E\big[\rho\big(Y_t,m(X_t, \theta)\big)\big|X_t\big]$ is differentiable in $\theta$ almost surely. 
This allows us to treat also losses that are \emph{per se} not differentiable, such as the pinball loss.

\begin{rem}\label{exmp:derivative loss}
	Let Assumption \eqref{ass:unique model} hold for some $k$-dimensional functional $\Gamma\colon \F_{\Y|\X}\to\Xi$ with a strict $\F_{\Y|\X}$-identification function $\ph\colon \R\times\Xi\to\R^k$ and a strictly $\F_{\Y|\X}$-consistent loss function $\rho \colon \R\times \Xi\to\R$. Suppose that $\E\big[\rho\big(Y_t,m(X_t, \theta)\big)\big|X_t\big]$ is differentiable in $\theta$ almost surely.
	Under the richness conditions on $\F_{\Y|\X}$ of \citet[Theorem 3.2]{FisslerZiegel2016}, we then have
	\be{eq:derivative loss}
	\nabla_\theta \E\big[\rho\big(Y_t,m(X_t, \theta)\big)\big|X_t\big] = \nabla_\theta m(X_t, \theta)^\intercal \cdot h\big(m(X_t,\theta)\big)\cdot\E\big[\ph\big(Y_t,m(X_t, \theta)\big)\big|X_t\big] \,,
	\ee
	where $h$ takes values in $\R^{k\times k}$ and the gradient $\nabla_\theta m(X_t,\theta)$ is in $\R^{k\times q}$.
	Comparing \eqref{eq:derivative loss} with \eqref{eqn:UnconditionalIdentificationGeneralForm}, one obtains the identity $A(X_t,\theta) = \nabla_\theta m(X_t, \theta)^\intercal \cdot  h\big(m(X_t,\theta)\big)$ for the instrument matrix. 
	The presence of $h$ and the limitations of the choice of $h$ discussed above yield that there are considerably fewer (strict) model-consistent losses than (strict) moment functions.
\end{rem}

\subsection{Univariate functionals}

If $\Gamma$ is univariate, its mixture-continuity implies that every strictly consistent loss $\rho$ is (strictly) \emph{order-sensitive} meaning that $\xi\mapsto \bar \rho(F,\xi)$ is (strictly) decreasing (increasing) for $\xi\le \Gamma(F)$ (for $\xi\ge \Gamma(F)$); see \citet[Proposition 3]{Nau1985}, \citet[Proposition 1]{Lambert2013} and \citet[Proposition 3.4]{BelliniBignozzi2015}.
The functional $\Gamma$ is mixture-continuous if for all $F_0, F_1\in\F$ such that $(1-\lambda)F_0 + \lambda F_1\in\F$ for all $\lambda \in[0,1]$ the function $[0,1]\ni \lambda\mapsto \Gamma((1-\lambda)F + \lambda G)$ is continuous.
Therefore, the derivative of $\rho$ is an \emph{oriented} identification function in the sense that $\nabla_\xi \bar \rho(F,\xi)\le 0$ ($\ge0$) if $\xi\le \Gamma(F)$ ($\ge\Gamma(F)$).
Intuitively, this excludes the existence of additional local minima of the expected loss, while possible saddle points still remain an issue.
Moreover, Osband's principle in dimension one \eqref{eq:Osband}.
Even if $\rho$ is strictly consistent, its derivative is not necessarily a strict identification function due to possible saddle points of the expected loss. That means even if $\ph$ is strict, $h$ might vanish at some points, see \cite{SteinwartPasinETAL2014} and \citet[p.\ 2117]{NeweyMcFadden1994} for further details and examples.
If $\ph$ is oriented and strict, then $h$ is non-negative.
This means, on the other hand, that we can also start with an oriented strict identification function, multiply it with any positive $h$,
and integrate it. This results in a strictly order-sensitive, and therefore, strictly consistent loss \cite[Theorem 7]{SteinwartPasinETAL2014}. If $\ph$ is not strict and $h$ simply non-negative, the resulting loss is merely consistent. This leads to the fact that there is a one-to-one relation between consistent losses and \emph{oriented} identification functions for $\Gamma$.

\begin{rem}
	If the identification function fails to be oriented, it can still be integrated, but does not yield a consistent score. E.g., $\ph(y,\xi) = (\one\{\xi\ge0\} - \one\{\xi<0\})(\xi-y)$ is a strict identification function for the mean which fails to be oriented. It is easy to check that the integral $\rho(y,\xi) = \int_0^\xi \ph(y,z)\dint z $ is not a strictly consistent loss function for the mean. This identification function constitutes a counterexample to  \citet[Lemma 6]{SteinwartPasinETAL2014}.
\end{rem}

\section{Semiparametric Models for Multiple Moments}
\label{sec:DoubleMomentRegressionModel}

We consider joint semiparametric models for the first and second moments, denoted by $\Gamma_{\textrm{mom}}$, and closely related, joint models for mean and variance, $\Gamma_{(\E, \Var)}$.
Since mean and variance are considered as the most important functionals in classical statistics, the related class of ARMA-GARCH models \citep{Bollerslev1986} is omnipresent in the econometric literature, and is often estimated through M- or Z-estimation.
See also \cite{Spady2018} for joint mean--variance regression models.

\begin{ass}
	\label{ass:Bregmanclass}
	Let $\F_{\Y|\X}$ contain all square integrable distributions such that $(\E, \Var)$ maps surjectively on $\Xi\subseteq\R^2$, which is assumed to be an open and path connected subset of $\R^2$.
\end{ass}
Assumption \eqref{ass:Bregmanclass} is required to characterize the classes of consistent loss and identification functions.
Recalling that $\Gamma_{\textrm{mom}}$ and $\Gamma_{(\E, \Var)}$ are in bijection, we can invoke the \emph{revelation principle} \citep{Osband1985, Gneiting2011} to relate the corresponding strict $\F_{\Y|\X}$-identification and strictly $\F_{\Y|\X}$-consistent loss functions. 
Strict identification functions are given by
\begin{align}
	\label{eqn:DMRPhi0Def}
	\ph_{\textrm{mom}}(y,\xi_1,\xi_2) = \begin{pmatrix}
		\xi_1 - y  \\
		\xi_2- y^2
	\end{pmatrix}, \quad \text{and} \quad
	\ph_{(\E, \Var)}(y,\xi_1,\xi_2) = \begin{pmatrix}
		\xi_1 - y  \\
		\xi_2 + \xi_1^2- y^2
	\end{pmatrix}.
\end{align}

Given Assumptions \eqref{ass:unique model} and \eqref{ass:CharMest}, \citet[Theorem 1]{DFZ_CharMest} yields that the full class of consistent M-estimators at \eqref{eq:M-est} is determined by the full class of (strictly) $\F_{\Y|\X}$-consistent loss functions.
	Using the revelation principle and  following \citet[Proposition 4.4]{FisslerZiegel2016}, under Assumption \eqref{ass:Bregmanclass}, the class of all differentiable (strictly) $\F_{\Y|\X}$-consistent loss functions is given by
	\begin{align}
		\begin{aligned}
			\label{eqn:DoubleMomentModelGeneralLoss1}
			\rho_{\textrm{mom},t} \big(y, \xi_1, \xi_2 \big) &= - \phi_t \big( \xi_1, \xi_2 \big) 
			+ \nabla \phi_t \big( \xi_1, \xi_2 \big)^\intercal
			\begin{pmatrix}
				\xi_1 - y  \\
				\xi_2- y^2
			\end{pmatrix} 
			+ \kappa_t(y),\\
			\rho_{(\E, \Var),t} \big(y, \xi_1, \xi_2 \big) &= - \phi_t \big( \xi_1, \xi_2+\xi_1^2 \big) 
			+ \nabla \phi_t \big( \xi_1, \xi_2 +\xi_1^2\big)^\intercal
			\begin{pmatrix}
				\xi_1 - y  \\
				\xi_2+\xi_1^2- y^2
			\end{pmatrix} 
			+ \kappa_t(y)
		\end{aligned}
	\end{align}
	where $\phi_t: \{(\xi_1, \xi_2)\in\R^2\,|\, \xi_1^2 \le \xi_2\} \to \mathbb{R}$ are (strictly) convex and twice differentiable functions with gradient $\nabla \phi_t$, and $\kappa_t$ is an $\F_{\Y|\X}$-integrable function.
	For any sequence $\Phi = (\phi_t)_{t\in\N}$ of such functions, we denote the corresponding M-estimators defined via \eqref{eq:M-est} by $\widehat \theta^{\textrm{mom}}_{M,T,\Phi}$ and $\widehat \theta^{(\E, \Var)}_{M,T,\Phi}$.
	\begin{proposition}
		\label{prop:JointMomentRegressionEfficiencyBound}
		Under Assumptions \eqref{ass:unique model}, \eqref{ass:PsiUncorrelated} together with Assumptions \eqref{ass:CharMest} and \eqref{ass:Bregmanclass} in Appendix \ref{sec:AddAssumptions}, suppose that the M-estimators $\widehat \theta^{\textrm{mom}}_{M,T,\Phi}$ and $\widehat \theta^{(\E, \Var)}_{M,T,\Phi}$ for the first two moments and for $(\E, \Var)$ are asymptotically normal.
		If almost surely
		\begin{align}\label{eq:eff phi}
			\phi_t(z) = \frac{1}{2} z^\intercal \boldsymbol{\operatorname{Var}}_t \left( \big( Y_t, Y_t^2 \big) \right)^{-1} z \qquad  \forall t\in\N,
		\end{align}	
		then these M-estimators attain the corresponding Z-estimation efficiency bounds in \eqref{eq:Lambda}.
	\end{proposition}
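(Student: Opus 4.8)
The plan is to invoke Theorem~\ref{theorem:GeneralEfficientA}, whose part~\ref{theorem:GeneralEfficientAPart3} states that an M-estimator attains the Z-estimation efficiency bound in \eqref{eq:Lambda} exactly when its induced instrument matrix coincides almost surely with $A_{t,C}^\ast(X_t,\theta_0) = C D_t(X_t,\theta_0)^\intercal S_t(X_t,\theta_0)^{-1}$ for some deterministic nonsingular $C$. The strategy is therefore to read off the instrument matrix generated by the loss in \eqref{eqn:DoubleMomentModelGeneralLoss1} under the quadratic choice \eqref{eq:eff phi}, and to check that it equals $A_{t,I_q}^\ast$ at $\theta_0$. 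I would treat the first-two-moments functional $\Gamma_{\textrm{mom}}$ first and obtain the mean--variance case via the revelation-principle reparametrization.

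First I would differentiate $\rho_{\textrm{mom},t}$ in $\xi$. Writing $\ph_{\textrm{mom}}(y,\xi) = \xi - (y,y^2)^\intercal$, the bracket term $\nabla\phi_t(\xi)^\intercal\ph_{\textrm{mom}}(y,\xi)$ contributes $\nabla^2\phi_t(\xi)\,\ph_{\textrm{mom}}(y,\xi) + \nabla\phi_t(\xi)$ upon differentiation, and the leading $-\nabla\phi_t(\xi)$ cancels the second summand, leaving $\nabla_\xi\rho_{\textrm{mom},t}(y,\xi) = \nabla^2\phi_t(\xi)\,\ph_{\textrm{mom}}(y,\xi)$. Comparing with Osband's principle in Remark~\ref{exmp:derivative loss} identifies the matrix-valued factor as $h(\xi) = \nabla^2\phi_t(\xi)$, so that the M-estimator's instrument matrix is $\nabla_\theta m(X_t,\theta_0)^\intercal\,\nabla^2\phi_t\big(m(X_t,\theta_0)\big)$. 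For the quadratic $\phi_t(z) = \tfrac12 z^\intercal M_t z$ with $M_t = \boldsymbol{\operatorname{Var}}_t\big((Y_t,Y_t^2)\big)^{-1}$, the Hessian is the constant matrix $M_t$, independent of the evaluation point.

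The key computation is then $S_t(X_t,\theta_0)$. Evaluating at $\theta_0$ gives the centered vector $\ph_{\textrm{mom}}(Y_t,m(X_t,\theta_0)) = (\E_t[Y_t]-Y_t,\ \E_t[Y_t^2]-Y_t^2)^\intercal$, so by \eqref{eqn:DefSMatrix} one obtains $S_t(X_t,\theta_0) = \E_t[\ph_{\textrm{mom}}\ph_{\textrm{mom}}^\intercal] = \boldsymbol{\operatorname{Var}}_t\big((Y_t,Y_t^2)\big) = M_t^{-1}$. Moreover $\E_t[\ph_{\textrm{mom}}(Y_t,m(X_t,\theta))] = m(X_t,\theta) - (\E_t[Y_t],\E_t[Y_t^2])^\intercal$, whence $D_t(X_t,\theta_0)$ reduces to the model derivative and $D_t(X_t,\theta_0)^\intercal = \nabla_\theta m(X_t,\theta_0)^\intercal$. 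Thus the M-estimator's instrument matrix equals $\nabla_\theta m(X_t,\theta_0)^\intercal M_t = D_t(X_t,\theta_0)^\intercal S_t(X_t,\theta_0)^{-1} = A_{t,I_q}^\ast(X_t,\theta_0)$, and Theorem~\ref{theorem:GeneralEfficientA} delivers efficiency. The decisive point is simply that the quadratic choice forces $h = \nabla^2\phi_t = M_t = S_t^{-1}$, which is exactly the weight the efficient instrument prescribes.

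For the mean--variance functional I would exploit that its loss in \eqref{eqn:DoubleMomentModelGeneralLoss1} is the moment loss precomposed with the bijection $g(\xi_1,\xi_2) = (\xi_1,\xi_2+\xi_1^2)$, so the chain rule yields $\nabla_\xi\rho_{(\E,\Var),t}(y,\xi) = Dg(\xi)^\intercal\,\nabla^2\phi_t\big(g(\xi)\big)\,\ph_{(\E,\Var)}(y,\xi)$ and hence $h(\xi) = Dg(\xi)^\intercal M_t$. At $\theta_0$ the identification function again collapses to the same centered vector $(\E_t[Y_t]-Y_t,\E_t[Y_t^2]-Y_t^2)^\intercal$, so $S_t(X_t,\theta_0) = M_t^{-1}$ is unchanged, while the same Jacobian factor reappears in $D_t(X_t,\theta_0)$, which picks up $Dg\big(m(X_t,\theta_0)\big) = \big(\begin{smallmatrix} 1 & 0 \\ 2\E_t[Y_t] & 1 \end{smallmatrix}\big)$ from differentiating the second-moment coordinate $m_2 + m_1^2$. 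I expect the main obstacle to be precisely this chain-rule bookkeeping: one must verify that the lower-triangular factor carried by $D_t$ and the transposed factor carried by $h$ cancel, so that forming $D_t^\intercal S_t^{-1}$ and $\nabla_\theta m^\intercal h$ again produces matching instrument matrices with the plain choice $C = I_q$, rather than an awkward nonconstant $C$. Once this cancellation is confirmed, Theorem~\ref{theorem:GeneralEfficientA} again applies and completes the proof.
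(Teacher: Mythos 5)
Your proof is correct and follows essentially the same route as the paper's: differentiate the Bregman-type loss to read off the induced instrument matrix $\nabla_\theta m(X_t,\theta)^\intercal\,\nabla^2\phi_t\big(m(X_t,\theta)\big)$, compute $S_t(X_t,\theta_0)=\boldsymbol{\operatorname{Var}}_t\big((Y_t,Y_t^2)\big)$ and $D_t(X_t,\theta_0)$, note that the quadratic choice \eqref{eq:eff phi} makes the Hessian equal $S_t(X_t,\theta_0)^{-1}$ so the instrument coincides with $A^\ast_{t,I_q}(X_t,\theta_0)$, and conclude via Theorem \ref{theorem:GeneralEfficientA}; your chain-rule bookkeeping for the $(\E,\Var)$ case through the bijection $g(\xi_1,\xi_2)=(\xi_1,\xi_2+\xi_1^2)$ is exactly the cancellation the paper carries out with explicit matrices, and it does go through with $C=I_q$ since the same Jacobian factor $Dg\big(m(X_t,\theta_0)\big)$ appears in both $D_t$ and the induced instrument. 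The only slight imprecision is attributing the full ``exactly when'' characterization to part \ref{theorem:GeneralEfficientAPart3} of Theorem \ref{theorem:GeneralEfficientA}: for this sufficiency statement only parts \ref{theorem:GeneralEfficientAPart1} and \ref{theorem:GeneralEfficientAPart2} are needed, which is also how the paper argues.
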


\begin{proof}[Proof of Proposition \ref{prop:JointMomentRegressionEfficiencyBound}]
	We first consider the case of the double moment functional.
	Straight-forward calculations yield that the class of identification functions corresponding to the M-estimators based on loss functions given in (\ref{eqn:DoubleMomentModelGeneralLoss1}) is given by
	\begin{align}\label{eq:psiphi}
		\psi_{\phi_t}(Y_t,X_t,\theta) =  A_{\phi_t}(X_t,\theta) \cdot \varphi_{\textrm{mom}}\big(Y_t, m(X_t,\theta) \big),
	\end{align}
	where $\varphi$ is given in (\ref{eqn:DMRPhi0Def}) and 
	$
	A_{\phi_t}(X_t,\theta) =
	\begin{pmatrix}
		\nabla_{\theta} m_1(X_t,\theta) &
		\nabla_{\theta} m_2(X_t,\theta)
	\end{pmatrix}
	\cdot 
	\nabla^2 \phi_t\big( m(X_t,\theta) \big).$
	Applying Theorem \ref{theorem:GeneralEfficientA} yields that the efficiency bound can be attained by a Z-estimator (and for equivalent M-estimators) if and only if  $A^\ast_{t,C}(X_t,\theta) = C D_t(X_t,\theta)^\intercal S_t(X_t,\theta)^{-1}$ almost surely, where $C$ is some deterministic and non-singular matrix, and where 
	\begin{align*}
		S_t(X_t,\theta_0) = \boldsymbol{\operatorname{Var}}_t \big( (Y_t, Y_t^2 ) \big), \qquad \text{ and } \qquad 
		D_t(X_t,\theta_0) =
		\begin{pmatrix}
			\nabla_{\theta} m_1(X_t,\theta_0)^\intercal \\
			\nabla_{\theta} m_2(X_t,\theta_0)^\intercal
		\end{pmatrix}.
	\end{align*}
	By choosing $C = I_q$ and the strictly convex quadratic form
	$\phi_t(z) = \frac{1}{2} z^\intercal \boldsymbol{\operatorname{Var}}_t  \big( (Y_t, Y_t^2 ) \big)^{-1} z$
	for all $t \in \mathbb{N}$ and for all $z \in \mathbb{R}^2$, this yields that $\nabla^2 \phi_t \big( m(X_t,\theta_0) \big) = \boldsymbol{\operatorname{Var}}_t  \big( (Y_t, Y_t^2 ) \big)^{-1}$ almost surely.
	Consequently, the M-estimator for the double moment regression is able to attain the efficient instrument matrix $A^\ast_{t,C}(X_t,\theta_0)$ (at $\theta_0$) and consequently the Z-estimation efficiency bound.

	For the situation of mean and variance, \eqref{eq:psiphi} takes the form
	\(
	\psi_{\phi}(Y_t,X_t,\theta) =  \tilde A_{t,\phi}(X_t,\theta) \cdot \varphi_{(\E, \Var)}\big(Y_t, m(X_t,\theta) \big),
	\) 
	where $\varphi$ is given in (\ref{eqn:DMRPhi0Def}) and where
	\begin{align*}
		\tilde A_{t,\phi}(X_t,\theta) =
		\begin{pmatrix} \nabla_\theta m_1(X_t,\theta)^\intercal \\ \nabla_\theta v(X_t,\theta)^\intercal + 2 m_1(X_t,\theta) \nabla_\theta m_1(X_t,\theta)^\intercal \end{pmatrix}^\intercal 
		\cdot 
		\nabla^2 \phi_t \begin{pmatrix} m_1(X_t,\theta) \\ v(X_t,\theta) + m_1^2(X_t,\theta) \end{pmatrix}.
	\end{align*}	
	Straight-forward calculations yield that $S_t(X_t,\theta_0)
	= \boldsymbol{\operatorname{Var}}_t \big( (Y_t, Y_t^2 ) \big)$ and 
		\[
		D_t(X_t,\theta_0)
		= \begin{pmatrix}
			\nabla_{\theta} m_1(X_t,\theta_0)^\intercal \\
			\nabla_{\theta}  v (X_t,\theta_0)^\intercal + 2 m_1(X_t,\theta_0) \nabla_{\theta} m_1(X_t,\theta_0)^\intercal
		\end{pmatrix}.
		\]
		Thus, upon using $\R^2\ni z \mapsto \phi_t(z) = \frac{1}{2} z^\intercal \boldsymbol{\operatorname{Var}}_t \big( (Y_t, Y_t^2 ) \big)^{-1} z$, the efficient choice can be attained.	
	\end{proof}
	
	This result is in line with the classical univariate mean regression, where both, M- and Z-estimators are able to attain the Z-estimation efficiency bound and the most efficient Bregman loss is given by the squared loss, weighted with the inverse of the conditional variance.
	Intuitively, this attainability can be explained by the fact that the classes of strictly consistent joint loss functions given in \eqref{eqn:DoubleMomentModelGeneralLoss1} are \textit{relatively large} due to the presence of the general convex function $\phi_t$, being a function in two arguments.

	For the first two moments, this can be illustrated by comparing it to a minimal subclass in this context, namely the class only consisting of the sum of (strictly) consistent loss functions for the individual components, the first and second moment. This arises from \eqref{eqn:DoubleMomentModelGeneralLoss1} when $\phi_t$ takes the additive form $\phi_{\textrm{add}, t}(\xi_1,\xi_2) = \phi_{1,t}(\xi_1)+\phi_{2,t}(\xi_2)$, where $\phi_{i,t}$ are both (strictly) convex.
	Since the Hessian $\nabla^2\phi_{\textrm{add}, t}$ is diagonal, $\phi_{\textrm{add}, t}$ can only take the form in \eqref{eq:eff phi} for the special situation when $Y_t$ and $Y_t^2$ are conditionally uncorrelated.
	Since the class of convex functions on $\R^2$ is far larger than the sum of two convex functions in the individual components, the efficiency bound can be attained.
	For the pair of mean and variance, note that one cannot decompose the loss into a sum of strictly consistent losses for each component, due to the variance failing to be elicitable in general. 
	In particular, this also shows the importance of modelling the variance \emph{jointly} with the mean. 
	However, an additive decomposition of $\phi_t$ as discussed above is also possible for mean and variance.
	
	These results are in stark contrast to the double quantile (DQ) regression framework considered Section \ref{sec:DoubleQuantileRegressionModel}, where the gap arises since the class of strictly consistent losses is relatively small, coinciding with the described minimal additive class. 

\section{Further implications of the gap: Equivariance properties}
\label{sec:implications}

\cite{Patton2011} and \cite{NoldeZiegel2017} provide arguments for the usage of homogeneous loss functions for forecast comparison and ranking. More generally, \cite{FisslerZiegel2019} advocate for loss functions that respect equivariance properties of the functional of interest. Besides homogeneity, a major equivariance property of interest is translation equivariance, or---more generally speaking---linear equivariance; see \cite{FisslerZiegel2019}. Again, we focus on two interesting pairs of functionals, (mean, variance) and $(Q_\a, \ES_\a)$, $\a\in(0,1)$. For any random variable $Y$ with finite second moment and any scalar $c\in\R$, the following identities hold
\begin{align}\label{eq:equivariance}
	\begin{split}
		\big((Q_\a(Y+c), \ES_\a(Y+c)\big) &= \big((Q_\a(Y)+c, \ES_\a(Y)+c\big),\\
		\big(\E[Y+c], \Var(Y+c)\big) &= \big(\E[Y]+c,\Var(Y)\big)\,. 
	\end{split}
\end{align}
Suppose one is to model the functional $(Q_\a, \ES_\a)$ with a parametric model (possibly with joint model parameters) of the form $m(X,\theta)=\big(q_\alpha(X,\theta), e_\alpha(X,\theta)\big)$, where $\theta= \big(\theta^{(1)}, \ldots, \theta^{(q)}\big)\in\Theta\subseteq\R^q$, with intercept parameters, say
\[
\begin{pmatrix}
	q_\alpha(X,\theta) \\
	e_\alpha(X,\theta)
\end{pmatrix}
= 
\begin{pmatrix}
	\theta^{(1)} + \widetilde q_\alpha (X,\theta^{(3)},\ldots, \theta^{(q)})\\
	\theta^{(2)} + \widetilde e_\alpha (X,\theta^{(3)},\ldots, \theta^{(q)})
\end{pmatrix}.
\]
Then, under Assumption \eqref{ass:unique model}, the correctly specified parameter $\theta_0$ has the following equivariance property for $(Y,X)\in\Z$ and $c\in\R$ such that $(Y+c,X)\in\Z$: 
\begin{align}\label{eq:equivariance2}
	\theta_0^{(j)}(F_{(Y+c,X)}) = 
	\begin{cases}
		\theta_0^{(j)}(F_{(Y,X)}) + c, & \text{for } j=1,2, \\
		\theta_0^{(j)}(F_{(Y,X)}), & \text{for } j=3, \ldots, q.
	\end{cases}
\end{align}
Similar results apply to the pair (mean, variance), where, of course, the intercept transformation only appears in the mean-component.

Similarly, given data $(\boldsymbol{Y}, \boldsymbol{X}) = (Y_t, X_t)_{t=1, \ldots, T}$, it would be desirable to find a similar translation equivariance property for an estimator $\widehat \theta_T = \widehat \theta_T (\boldsymbol{Y}, \boldsymbol{X} )$:
\begin{align}\label{eq:equiv}
	\widehat \theta_T^{(j)}(\boldsymbol{Y} + c, \boldsymbol{X}) = 
	\begin{cases}
		\widehat \theta_T^{(j)}(\boldsymbol{Y}, \boldsymbol{X}) + c, & \text{for } j=1,2, \\
		\widehat \theta_T^{(j)}(\boldsymbol{Y}, \boldsymbol{X}), & \text{for } j=3, \ldots, q.
	\end{cases}
\end{align}
Under Assumption \eqref{ass:unique model} of a correctly specified model, \eqref{eq:equiv} holds for the probability limit of any consistent estimator. However, in finite samples or under model misspecification, it may well fail unless there is some additional structure in the estimator.
For example, the OLS-estimator clearly satisfies \eqref{eq:equivariance2} and \eqref{eq:equiv}, relying on the fact that the squared loss $\rho(y,\xi) = \frac12(y-\xi)^2$ is translation invariant. Also, the corresponding Z-estimator is translation equivariant, since the standard identification function $\ph(y,\xi) = y-\xi$ is translation invariant and the instrument matrix $A(X,\theta) = X$ is independent of $\theta$.

It turns out that both two-dimensional functionals in \eqref{eq:equivariance} possess strict identification functions that respect the respective equivariance properties described there, namely
\begin{align*}
	\ph_{(Q_\alpha, \ES_\alpha)}(y,\xi_1, \xi_2) &=
	\begin{pmatrix}
		\one\{y\le \xi_1\} -\a \\ \xi_2 +\tfrac{1}{\a}\one\{y\le \xi_1\} (\xi_1- y) - \xi_1
	\end{pmatrix},
	\\
	\ph_{(\E, \Var)}(y,\xi_1, \xi_2) &= 
	\begin{pmatrix}
		\xi_1 - y \\ \xi_2 - (y-\xi_1)^2
	\end{pmatrix}.
\end{align*}
		Using instrument matrices which are independent of $\theta$, they induce Z-estimators which obey the translation equivariance in their intercept components.
		However, Propositions 4.9 and 4.10 in \cite{FisslerZiegel2019} ascertain that for both functional pairs, there are no strictly consistent loss function with these equivariance properties---at least under general and realistic assumptions.
		This rules out the existence of corresponding M-estimators with this property---another manifestation of the gap between these two classes of estimators.
		
		%

\section{Details on the Efficiency in Double Quantile Models}
		\label{sec:GapSepModels}
		
		Theorem \ref{theorem:DoubleQuantileRegressionEfficiencyBound} part \ref{statement:DQREfficientChoice}, which is based on Theorem \ref{theorem:GeneralEfficientA} part \ref{theorem:GeneralEfficientAPart3}, merely implies that the difference of the asymptotic covariances between any M-estimator and the joint efficient Z-estimator is positive semi-definite with at least one positive eigenvalue.
		One could plausibly suspect that this is purely caused by differing off-diagonal ``covariance'' terms, and that the diagonal entries---i.e., the estimation ``variances'' of the parameters---coincide (at least for the pseudo-efficient M-estimator).
		The following example illustrates that the efficiency gap also arises for the diagonal entries.

		
		To simplify the exposition, we consider a stationary process $(Y_t,X_t)_{t\in\mathbb N}$ with a univariate $X_t \in \mathbb{R}$, and two linear ``slope only'' models $q_\alpha(X_t,\theta^\alpha) = X_t\theta^\alpha$ and $q_\beta(X_t,\theta^\beta) = X_t\theta^\beta$.
		Using the shorthands $f_\alpha:= f_t\big(q_\alpha(X_t,\theta_0^\alpha)\big)$ and $f_\beta:= f_t\big(q_\beta(X_t,\theta_0^\beta)\big)$, the asymptotic variance of the individual efficient Z-estimator for the $\theta^\alpha$ component is $\alpha(1-\alpha)/\E[f_\alpha^2 X_t^2]$.
		On the other hand, the asymptotic variance of the joint efficient Z-estimator for the $\theta^\alpha$ component is 
		\[
		\frac{\alpha(1-\alpha)}{\E[f_\alpha^2X_t^2]} \; \times \; 
		\frac{\alpha(1-\alpha)\beta(1-\beta) - \alpha^2(1-\beta)^2}{\alpha(1-\alpha)\beta(1-\beta) - \alpha^2(1-\beta)^2 \E[f_\alpha f_\beta X_t^2]^2 /\big(\E[f_\alpha^2  X_t^2]\E[ f_\beta^2 X_t^2]\big)}\,,
		\] 
		which is generally smaller than  
		\(
		\alpha(1-\alpha)/\E[f_\alpha^2X_t^2],
		\)
		since the Cauchy--Schwartz inequality implies that $\E[f_\alpha f_\beta X_t^2]^2 /\big(\E[f_\alpha^2  X_t^2]\E[ f_\beta^2 X_t^2]\big)\le 1$.
		The latter holds with equality if and only if $f_\alpha  X_t$ and $f_\beta X_t$ are colinear almost surely, once again stressing the importance of condition \eqref{eqn:CondEffGapDQRConditions1}.
		This effect can numerically be observed in our simulations in panels A of Table \ref{tab:DQRSimSD} by comparing the efficiency bound to the pseudo-efficient choices based on the choices $F_t(\xi)$.

\section{The Two-Step Estimation Efficiency Bound}
\label{sec:TwoStepEfficiency}	

In related work, \cite{Barendse2020} considers efficiency among the class of \textit{two-step} estimators of semiparametric models for the quantile and ES with separated parameters.
These two-step estimators utilize a quantile regression to estimate the quantile parameters in the first step and a restricted and weighted least squares estimator in the second step for the model parameters of the conditional ES.
The author considers efficiency among the possible estimation weights from the second step weighted least squares estimator, see \cite{Barendse2020} for details.
This procedure amounts to efficiency of the ES parameters \textit{in isolation}, which generally results in more restrictive efficiency bounds than efficiency of the joint model parameters considered in this article.

In our notation, the class of two-step estimators can be characterized by the general form \eqref{eqn:UnconditionalIdentificationGeneralForm}, the identification functions in \eqref{eqn:QESIdFunction} and the class of instrument matrices
\begin{align}
	\label{eqn:EfficientAtMatrixBarendse}
	A_t^\dagger(X_t, \theta_0) = 
	\begin{pmatrix}
		\nabla_{\theta^q} q_\alpha(X_t,\theta_0^q) & 0   \\
		0 & \phi_t''\big(e_\alpha(X_t,\theta^e_0)\big)  \nabla_{\theta^e} e_\alpha(X_t,\theta_0^e)
	\end{pmatrix}.
\end{align}
For these estimators, the theory of \cite{ProkhorovSchmidt2009}, \cite{Bartalotti2013} can be used to establish that the asymptotic distribution of the joint Z- and the two-step estimators coincide.
Consequently, the family of two-step estimators of \cite{Barendse2020} form a subclass of the general class of Z-estimators we consider in this article.
Hence, it follows that the resulting two-step estimation efficiency bound is no smaller than the general Z-estimation efficiency bound of Theorem \ref{theorem:GeneralEfficientA}.
While these two bounds can coincide in special situations, they generally do not as illustrated in the following.

For the special case of location-scale models with stationary innovations $(\varepsilon_t)_{t \in \mathbb{N}}$ discussed in Section \ref{sec:QESModelExamples}, 
the efficient weights of \cite{Barendse2020} coincide with the choice of $\phi_t''$ implied by a combination of (\ref{eqn:CondEffGapQESRConditions1}) and (\ref{eqn:CondEffGapQESRConditions3}) in Theorem \ref{theorem:QuantileESRegressionEfficiencyGap}.
This illustrates that for this special case, and in terms of the ES parameters, $\theta^e$, considered in isolation, the efficient two-step and the efficient M-estimator are equally efficient; see \citet[Section 4.4]{Barendse2020}.
However, if efficiency is considered for the full parameter vector, $\theta$, the two-step estimator using the instrument matrix (\ref{eqn:EfficientAtMatrixBarendse}) is generally less efficient, which is caused by the inefficient choice of the first-step standard quantile regression.
In this special case, joint efficiency could be guaranteed by employing an efficient quantile regression estimator in the first step, see e.g., \cite{Komunjer2010a, Komunjer2010b}.
We refer to the simulation results of Sections \ref{sec:QESRSimStudy} and \ref{app:AdditionalTables} and in particular to Tables \ref{tab:QESRSim25Percent} and \ref{tab:QESRSimSDTrue} for a numerical illustration.

More generally, \citet[Section 4.4]{Barendse2020}  illustrates that, taken in isolation, the ES specific asymptotic sub-covariance matrix of the M-estimator $\widehat \theta^e$ is subject to his two-step efficiency bound.
However, this does not hold if one considers the entire covariance matrix of the joint model parameters for the quantile and ES.
This can be observed by comparing (\ref{eqn:EfficientAtMatrixBarendse}) with the efficient instrument matrix $A_t^\ast$ given in (\ref{eqn:QESREfficientMatrixD}) and (\ref{eqn:QESREfficientMatrixS}):
while $A_{t,C}^\ast$ generally requires non-zero off-diagonal blocks, the matrix $A_t^\dagger$ is restricted to a block diagonal matrix with zero off-diagonal blocks.

Recall that under the gradient condition that $\nabla_{\theta^q} q_\alpha(X_t,\theta_0^q) = \nabla_{\theta^e} e_\alpha(X_t,\theta_0^e)$ for all $t \in \mathbb{N}$ almost surely, part \ref{statement:QESREfficientChoice} of Theorem \ref{theorem:QuantileESRegressionEfficiencyGap} implies that if conditions (\ref{eqn:CondEffGapQESRConditions1}) or (\ref{eqn:CondEffGapQESRConditions2}) fail to hold, the M-estimator cannot attain the Z-estimation efficiency bound.
As  \citet[Section 4.4]{Barendse2020} informally shows that the two-step estimators are equivalent to the class of M-estimators in terms of the efficiency of the ES parameters, this illustrates that the two-step estimators also cannot attain the Z-estimation efficiency bound in this setting. (Formally, relating $A_t^\dagger(X_t, \theta_0)$ in (\ref{eqn:EfficientAtMatrixBarendse}) to the efficient choice $A_t^\ast(X_t, \theta_0)$ and employing Theorem \ref{theorem:GeneralEfficientA} as in the proof of Theorem \ref{theorem:QuantileESRegressionEfficiencyGap} yields the desired result.)	
Besides supporting our claim of an existing \textit{efficiency gap} for the joint quantile and ES models, this illustrates that the two-step efficiency bound of \cite{Barendse2020} does generally not coincide with the general Z-estimation efficiency bound of \cite{Hansen1985}, \cite{Chamberlain1987}, and \cite{Newey1993}.

We illustrate the theoretical considerations of this section numerically through the simulation setup of Sections \ref{sec:QESRSimStudy} and \ref{app:AdditionalTables}.
In Panel A of Table \ref{tab:QESRSim25Percent} and in Panels A and B of Table \ref{tab:QESRSimSDTrue}, 
we additionally report the two-step efficiency bound in the line denoted ``Barendse Bound". 
For the homoskedastic innovations and for the ES specific parameters, the two-step efficiency bound coincides with the Z-estimation efficiency bound, while it does not for the quantile parameters.
This is primarily caused by the inefficient first-step quantile estimation---using an efficient quantile estimator (based on $g_t(\xi_1) = F_t(\xi_1)$) would equate both efficiency bounds in the homoskedastic case.
In contrast, in the heteroskedastic case, the two-step efficiency bound is considerably larger than the Z-estimation efficiency bound for all four considered parameters.
Interestingly, the choice of $\phi^\text{eff2}$ motivated by this two-step estimation efficiency bound exhibits equally efficient ES parameters while the quantile parameters show larger standard deviations.

\section{Connections to the semiparametric efficiency bound}
	\label{sec:SemipEffBound}
		The main focus of this article lies on the Z-estimation efficiency bound for conditional moment restrictions of \cite{Hansen1985}, \cite{Chamberlain1987} and \cite{Newey1993}.
		In the context of i.i.d.\ processes and differentiable moment conditions, \cite{Chamberlain1987} shows that this bound coincides with the general semiparametric efficiency bound in the sense of a least favorable submodel of \cite{Stein1956}; c.f.\ \cite{Newey1990} and \cite{BKRW1998book} for surveys on this matter and \cite{Ackerberg2014}, \cite{Jankova2018}, \cite{Hristache2016},  \cite{Komunjer2010a} for some recent progress.
		
		The definition of the semiparametric efficiency bound builds on the idea that the data stems from a \textit{parametric submodel}, i.e., a parametric model which completely specifies the full distribution, contains the correctly specified model, and satisfies the semiparametric model assumption. 
		E.g., if we consider a semiparametric model for the conditional mean, we do not make any assumptions about the exact conditional distribution beyond the mean assumption.
		Any model which parametrizes the full conditional distribution (e.g., a normal distribution with parameterized variance) is such a \textit{parametric submodel}.
		Estimation of any parametric submodel is subject to the classical Cram\'{e}r--Rao efficiency bound, which can be attained, e.g., by maximum likelihood estimation using the true parametric distribution, dispensing with a discussion of superefficient estimators.
		For any parametric submodel, a consistent and asymptotically normal semiparametric estimator is contained in the class of estimators for this parametric submodel and thus, it is subject to the parametric Cram\'{e}r--Rao efficiency bound.
		Consequently, any semiparametric estimator has an asymptotic variance which is no smaller than the Cram\'{e}r--Rao bound for \textit{any} parametric submodel.
		Hence, the \textit{semiparametric efficiency bound} is defined as the supremum of the  Cram\'{e}r--Rao bounds of all parametric submodels.
		
		The results of this paper concerning efficient estimation are derived with respect to the Z-estimation efficiency bound of \cite{Hansen1985}, \cite{Chamberlain1987}, and \cite{Newey1993}.
		In applications to smooth objective functions and i.i.d.\ processes, the result of \cite{Chamberlain1987} can be used to equate these two bounds.
		However, as we are not aware of a general relation of these bounds for non-i.i.d.\ processes, we cannot preclude that the semiparametric efficiency bound is strictly smaller (in the Loewner order) than the Z-estimation efficiency bound in certain situations.
		Consequently, all following assertions are stated in relation to the Z-estimation efficiency bound.
		This does not affect our main conclusion in terms of efficient estimation: When the M-estimator cannot attain the Z-estimation efficiency bound, it also cannot attain the semiparametric efficiency bound, irrespectively of whether these quantities coincide.

\section{Identification of the Efficient Z-estimator for double quantile models}
\label{app:IDZestDQRModels}

Following \citet{DFZ_CharMest}, strict model consistency can directly be obtained by employing strictly consistent loss functions and a no-perfect collinearity condition of the model gradient.
In contrast, this is more involved for the Z-estimator.
Thus, the following proposition shows strict model identification for an efficient Z-estimator and for a large class of models.

Note that Theorem \ref{theorem:GeneralEfficientA} asserts that the Z-estimator is efficient based on any choice $A^*_{t,C}(X_t,\theta)$ of instrument matrix such that $A^*_{t,C}(X_t,\theta_0) = CD_t(X_t,\theta_0)^\intercal S_t(X_t,\theta_0)^{-1}$, see \eqref{eqn::GeneralEfficientAMatrix}. This means we only have a condition on $A^*_{t,C}(X_t,\theta)$ for $\theta= \theta_0$. 
To come up with such a matrix, there are two straight forward ways how to guarantee this. First, we might set $A^*_{t,C}(X_t,\theta) = CD_t(X_t,\theta)^\intercal S_t(X_t,\theta)^{-1}$, and second, we might choose $A^*_{t,C}(X_t,\theta)$ to be constant in $\theta$ and equal to $CD_t(X_t,\theta_0)^\intercal S_t(X_t,\theta_0)^{-1}$. For practical purposes, the latter situation is often hard or infeasible to implement, since it usually requires knowledge of the unknown true parameter $\theta_0$ (and additional quantities of the conditional distribution $F_t$). 

For the particular situation of the double quantile model, using the canonical identification function $\ph$ given in Running Example \eqref{RunningExmp3}, $S_t(X_t,\theta_0)$ takes the form \eqref{eqn:DQREfficientSMainText}, which means it is entirely independent of \emph{any} knowledge on the underlying DGP whatsoever. This makes the latter choice attractive and reasonably feasible.

\begin{proposition}
	\label{prop:UniqueIdentificationDQR}
	We assume that 
	(a) the double quantile model is linear with separated parameters, i.e., $Q_\alpha(Y_t|X_t) = q_\alpha(X_t,\theta_0^\alpha) = X_t^\intercal \theta_0^\alpha$ and $Q_\beta(Y_t|X_t) = q_\alpha(X_t,\theta_0^\beta)= X_t^\intercal \theta_0^\beta$,
	such that $\theta_0 = (\theta_0^\alpha, \theta_0^\beta)\in\interior(\Theta)$,
	(b) for all $t\in\N$, $F_t$ is differentiable with a strictly positive derivative  $f_t$,
	and (c), there exists a possibly time-dependent deterministic constant $c_t > 0$, such that $f_t\big(q_\alpha(X_t,\theta_0^\alpha)\big) = c_t  f_t\big(q_\beta(X_t,\theta_0^\beta)\big)$ almost surely.
	Then, the moment function of the efficient Z-estimator of the DQR model is a strict $\F_\Z$-identification function for $\theta_0$, i.e., it holds that
	\begin{align*}
		\mathbb{E} \left[ A_t^\ast(X_t, \theta_0) \varphi \big( Y_t, m(X_t, \theta) \big)  \right]	= 0 
		\quad \Longleftrightarrow \quad
		\theta = \theta_0,
	\end{align*}
	where $A_t^\ast(X_t, \theta_0)$ is given in \eqref{eqn::GeneralEfficientAMatrix}.
\end{proposition}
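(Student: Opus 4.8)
The plan is to prove the two implications separately: the backward direction is immediate, while the forward direction rests on a decoupling argument that is made possible precisely by assumption (c). Throughout I would write $\delta = \theta - \theta_0 = (\delta^\alpha, \delta^\beta)$, abbreviate $f_\alpha := f_t\big(q_\alpha(X_t,\theta_0^\alpha)\big)$ and $f_\beta := f_t\big(q_\beta(X_t,\theta_0^\beta)\big)$, and recall from \eqref{eqn:DQREfficientSMainText} that here $S_t(X_t,\theta_0)\equiv S$ is the deterministic matrix with entries $\alpha(1-\alpha)$, $\alpha(1-\beta)$, $\beta(1-\beta)$, whose determinant $\alpha(1-\beta)(\beta-\alpha)$ is strictly positive since $0<\alpha<\beta<1$; hence $S^{-1}$ exists and is constant. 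As the model is linear with separated parameters, $D_t(X_t,\theta_0)^\intercal$ is the block matrix whose first column is $f_\alpha X_t$ in the $\theta^\alpha$-rows and whose second column is $f_\beta X_t$ in the $\theta^\beta$-rows, and the instrument $A_t^\ast(X_t,\theta_0) = C D_t(X_t,\theta_0)^\intercal S^{-1}$ from \eqref{eqn::GeneralEfficientAMatrix} does not depend on the argument $\theta$ appearing inside $\ph$.

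For the backward direction, if $\theta=\theta_0$ then $\ph$ being a strict $\F_{\Y|\X}$-identification function for $\Gamma$ gives $\E_t[\ph(Y_t, m(X_t,\theta_0))]=0$, and since $A_t^\ast(X_t,\theta_0)$ is $\sigma(X_t)$-measurable the law of iterated expectations yields $\E[A_t^\ast\,\ph]=\E\big[A_t^\ast\,\E_t[\ph]\big]=0$. For the forward direction I would first compute, using $F_t\big(q_\alpha(X_t,\theta_0^\alpha)\big)=\alpha$ and $F_t\big(q_\beta(X_t,\theta_0^\beta)\big)=\beta$, the conditional moment $\E_t[\ph(Y_t,m(X_t,\theta))]=(a,b)^\intercal$ with $a:=F_t(X_t^\intercal\theta^\alpha)-\alpha$ and $b:=F_t(X_t^\intercal\theta^\beta)-\beta$. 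By iterated expectations the hypothesis becomes $C\,\E\big[D_t^\intercal S^{-1}(a,b)^\intercal\big]=0$, and since $C$ is deterministic and invertible it may be dropped, leaving the two block equations $\E[f_\alpha r_1 X_t]=0$ and $\E[f_\beta r_2 X_t]=0$, where $(r_1,r_2)^\intercal:=S^{-1}(a,b)^\intercal$.

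The crux is to undo the coupling between the two quantile levels encoded in the off-diagonal entry of $S^{-1}$. Invoking (c), $f_\alpha=c_t f_\beta$ with $c_t$ a deterministic constant for the fixed index $t$, so $c_t$ factors out of the first block equation and both equations acquire the common scalar weight $f_\beta$ and the common regressor $X_t$, namely $\E[f_\beta r_1 X_t]=0$ and $\E[f_\beta r_2 X_t]=0$. Forming the two linear combinations of these corresponding to the rows of $S$ and using $SS^{-1}=I_2$ collapses $(r_1,r_2)$ back to $(a,b)$, decoupling the system into $\E[f_\beta a X_t]=0$ and $\E[f_\beta b X_t]=0$. Pre-multiplying the first by $(\delta^\alpha)^\intercal$ gives $\E\big[f_\beta a\,X_t^\intercal\delta^\alpha\big]=0$; since $a=F_t(X_t^\intercal\theta^\alpha)-F_t(X_t^\intercal\theta_0^\alpha)$ has the same sign as $X_t^\intercal\delta^\alpha$ by the strict monotonicity of $F_t$ in (b) and $f_\beta>0$, the integrand is nonnegative and must vanish almost surely, forcing $X_t^\intercal\delta^\alpha=0$ a.s.; the analogous step gives $X_t^\intercal\delta^\beta=0$ a.s. Finally, Assumption \eqref{ass:unique model} rules out $\delta^\alpha\neq0$ or $\delta^\beta\neq0$, as either would yield a second parameter specifying the same conditional quantiles, so $\theta=\theta_0$.

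I expect the decoupling step to be the main obstacle and the conceptual heart of the argument: it is exactly here that assumption (c) is indispensable, because only the proportionality of the two conditional densities lets the common weight $f_\beta$ be extracted so that the constant matrix $S$ can invert the level-coupling. Without (c) the two blocks carry the genuinely different weights $f_\alpha$ and $f_\beta$ inside the expectation and cannot be recombined in this way, which is consistent with the efficiency gap established elsewhere in the paper.
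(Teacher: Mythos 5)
Your proof is correct and follows essentially the same route as the paper's: both reduce the hypothesis via iterated expectations to the two block moment equations $\E[f_\alpha r_1 X_t]=0$, $\E[f_\beta r_2 X_t]=0$, use the deterministic proportionality in (c) to decouple them into $\E[f\, a\, X_t]=0$ and $\E[f\, b\, X_t]=0$ (the paper solves the resulting $2\times 2$ linear system explicitly where you re-multiply by the constant matrix $S$ — algebraically the same step), and then force $X_t^\intercal(\theta^\alpha-\theta_0^\alpha)=0$ almost surely before invoking Assumption 1. The only minor difference is the last step: the paper applies the mean value theorem to produce the squared term $f_\alpha f_t\big(q_\alpha(X_t,\tilde\theta^\alpha)\big)\big(X_t^\intercal(\theta^\alpha-\theta_0^\alpha)\big)^2$, whereas you use strict monotonicity of $F_t$ for a sign argument; both are valid and both rely on the linearity of the model in the same way.
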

At the cost of some more tedious notation, Proposition \ref{prop:UniqueIdentificationDQR} can be generalized to the situation of linear models with not necessarily separated parameters, so long as there is at least one component that is used for modelling one quantile only, respectively. E.g., in a simple linear regression model, the two quantile models might have the same slope, but a different intercept, or vice versa, they might have the same intercept, but a different slope. 
Generalising the assertion much beyond linear models seems to be difficult due to the application of the mean value theorem in the proof.

\begin{proof}[Proof of Proposition \ref{prop:UniqueIdentificationDQR}]
	It holds that 
	\(
	\mathbb{E} \left[ A_t^\ast(X_t, \theta_0) \varphi \big( Y_t, m(X_t, \theta_0) \big)  \right]	= 0
	\)
	since we have that  $\E\left[\varphi \big( Y_t, m(X_t, \theta_0) \big)\big|X_t\right]=0$.
	The reverse direction is a little more involved.
	For this, straight-forward calculations yield that for any $\theta\in\Theta$
	\begin{align*}
		\mathbb{E} \left[ A_t^\ast(X_t, \theta_0) \varphi \big( Y_t, m(X_t, \theta) \big)  \right]
		= \mathbb{E} \left[ U_1 \nabla_\theta q_\alpha(X_t,\theta_0^\alpha)  + U_2 \nabla_\theta q_\beta(X_t,\theta_0^\beta) \right], 
	\end{align*}
	where the scalar and $\sigma(X_t)$-measurable random variables $U_1$ and $U_2$ are given by
	\begin{align*}
		U_1 &= \frac{ f_t\big(q_\alpha(X_t,\theta_0^\alpha)\big) }{\alpha (1-\alpha) \beta - \alpha^2 (1-\beta)} \big( \beta a - \alpha b \big) \qquad \text { and } \\ 
		U_2 &= \frac{ f_t\big(q_\beta(X_t,\theta_0^\beta)\big) }{\beta (1-\alpha) (1-\beta) - \alpha (1-\beta)^2} \big( -(1-\beta) a + (1- \alpha) b \big),
	\end{align*}
	with $a = F_t\big(q_\alpha(X_t,\theta^\alpha)\big) - \alpha$ and $b = F_t\big(q_\beta(X_t,\theta^\beta)\big) - \beta$.
	As $\nabla_\theta q_\alpha(X_t,\theta_0^\alpha) = \begin{pmatrix}
		X_t \\0
	\end{pmatrix}$ and $\nabla_\theta q_\beta(X_t,\theta_0^\beta) = \begin{pmatrix}
		0\\X_t 
	\end{pmatrix}$,
	it holds that $\mathbb{E} \left[ A_t^\ast(X_t, \theta_0) \varphi \big( Y_t, m(X_t, \theta) \big)  \right] = 0$ if and only if
	\begin{align}\label{eqn:AppendixC}
		\mathbb{E} \left[  f_t\big(q_\alpha(X_t,\theta_0^\alpha)\big) 
		\big( \beta a - \alpha b \big) X_t \right] = 0 \ \text{ and } \
		\mathbb{E} \left[  f_t\big(q_\beta(X_t,\theta_0^\beta)\big) \big( (1-\beta) a - (1- \alpha) b  \big) X_t \right] = 0.
	\end{align} 
	As $f_t\big(q_\alpha(X_t,\theta_0^\alpha)\big) = c_t  f_t\big(q_\beta(X_t,\theta_0^\beta)\big)$ almost surely by assumption (where $c_t$ is deterministic), this implies that 
	\begin{align*}
		\beta \mathbb{E} \big[  f_t\big(q_\alpha(X_t,\theta_0^\alpha)\big)  a X_t  \big] - \alpha  \mathbb{E} \big[  f_t\big(q_\alpha(X_t,\theta_0^\alpha)\big) b X_t \big] &= 0 \qquad \text{ and } \\
		c_t (1-\beta) \mathbb{E} \big[  f_t\big(q_\alpha(X_t,\theta_0^\alpha)\big) a X_t \big]  -  c_t (1-\alpha) \mathbb{E} \big[  f_t\big(q_\alpha(X_t,\theta_0^\alpha)\big) b X_t  \big] &= 0.
	\end{align*} 	
	Solving this system of equations, where we exploit that $c_t\neq0$, and combining it with \eqref{eqn:AppendixC} and the fact that $\alpha\neq \beta$, we arrive at 
	\begin{align}
		\label{eqn:ExpectationZeroContradiction}
		\mathbb{E} \left[  f_t\big(q_\alpha(X_t,\theta_0^\alpha)\big) a X_t \right] = 0 \quad \text{ and } \quad 
		\mathbb{E} \left[  f_t\big(q_\alpha(X_t,\theta_0^\alpha)\big) b X_t \right] = 0.
	\end{align} 
	We now proceed by a proof through contradiction with an argument similar as in \cite{DimiBayer2019}.
	For this, assume that $\theta \not= \theta_0$.
	Using the zero-condition in (\ref{eqn:ExpectationZeroContradiction}), we get
	\begin{align*}
		0 &= \mathbb{E} \left[  f_t\big(q_\alpha(X_t,\theta_0^\alpha)\big) a X_t^\intercal \right] \big( \theta^\alpha - \theta_0^\alpha \big) \\
		&= \mathbb{E} \left[  f_t\big(q_\alpha(X_t,\theta_0^\alpha)\big)  
		X_t^\intercal \big(  \theta^\alpha - \theta_0^\alpha \big)  
		\big(   F_t\big(q_\alpha(X_t,\theta^\alpha)\big) - F_t\big(q_\alpha(X_t,\theta_0^\alpha)\big)  \big) \right] \\
		&= \mathbb{E} \left[  f_t\big(q_\alpha(X_t,\theta_0^\alpha)\big) 
		f_t\big(q_\alpha(X_t,\tilde \theta^\alpha)\big) 
		\Big(X_t^\intercal \big(  \theta^\alpha - \theta_0^\alpha \big)  \Big)^2
		\right],
	\end{align*}
	where we have used the mean value theorem and the linearity of the model to obtain the last identity and where $\tilde \theta^\alpha = (1-\lambda) \theta_0^\alpha + \lambda \theta^\alpha$ for some $\lambda\in[0,1]$. 
	By assumption, the density is strictly positive such that we can conclude that $\P\big(X_t^\intercal  (  \theta^\alpha - \theta_0^\alpha ) =0\big)=1$. 
	Then, due to Assumption \eqref{ass:unique model}, it must hold that $\theta^\alpha = \theta_0^\alpha$. 
	Employing a similar argument to $\theta^\beta$ yields that $\theta^\beta= \theta_0^\beta$, which concludes this proof.
\end{proof}

\section{Additional Simulation Results}
\label{app:AdditionalTables}

In this section, we report simulation results for additional probability levels for the double quantile, and the joint quantile and ES models discussed in Sections \ref{sec:DoubleQuantileRegressionModel} and \ref{sec:QuantileESRegressionModel}.
Specifically, Tables \ref{tab:DQRSimSDTrue} 
and \ref{tab:DQRSim_Joint_InterceptSDTrue} present results for the double quantile model and Table \ref{tab:QESRSimSDTrue} for the joint quantile and ES model.
The format of these tables follows Tables \ref{tab:DQRSimSD} and \ref{tab:QESRSim25Percent} from the main document.

\begin{table}[p]
	\caption{Asymptotic Standard Deviations of Separated Parameter Double Quantile Models}
	\label{tab:DQRSimSDTrue}
	\tiny
	\centering
	\begin{tabularx}{\linewidth}{X @{\hspace{0.5cm}} lrrrr @{\hspace{0.1cm}} lrrrr @{\hspace{0.1cm}} lrrrr} 
		\addlinespace
		\toprule
		& & \multicolumn{4}{c}{ (a) Homoskedastic } && \multicolumn{4}{c}{ (b) Heteroskedastic $t$ }  && \multicolumn{4}{c}{ (c) Heteroskedastic $\mathcal{SN}$ }    \\
		\cmidrule(lr){3-6} \cmidrule(lr){8-11} \cmidrule(lr){13-16} 
		$g_t(\xi)$ & & $\theta_1$ & $\theta_2$ & $\theta_3$ & $\theta_4$ & &  $\theta_1$ & $\theta_2$ & $\theta_3$ & $\theta_4$ & & $\theta_1$ & $\theta_2$ & $\theta_3$ & $\theta_4$ \\
		\midrule
		\\
		& &\multicolumn{14}{c}{Panel A: $(\alpha, \beta) = (0.5\%, 1\%)$} \\
		\cmidrule(lr){3-16}
		$\xi$ &   &  1.043 &  1.041 &  1.043 &  1.041 &   &  1.134 &  1.155 &  1.089 &  1.106 &   &  1.179 &  1.197 &  1.120 &  1.136\\
		$\exp(\xi)$ &   &  1.024 &  1.034 &  1.011 &  1.016 &   &  2.964 &  4.122 &  1.937 &  2.350 &   &  1.503 &  1.587 &  1.309 &  1.357\\
		$\log(\xi)$ &   &  1.066 &  1.064 &  1.061 &  1.059 &   & 64.828 & 70.114 &  1.099 &  1.119 &   &  1.161 &  1.178 &  1.112 &  1.128\\
		$F_{\text{Log}}(\xi)$ &   &  1.043 &  1.041 &  1.043 &  1.041 &   &  1.168 &  1.202 &  1.091 &  1.107 &   &  1.179 &  1.197 &  1.120 &  1.136\\
		$F_t(\xi)$  &   &  1.000 &  1.000 &  1.000 &  1.000 &   &  1.015 &  1.014 &  1.015 &  1.014 &   &  1.009 &  1.009 &  1.009 &  1.009\\
		\cmidrule(lr){3-16}
		Eff.~Bound &   & 18.321 & 10.104 & 14.022 &  7.733 &   & 75.811 & 33.848 & 55.029 & 25.315 &   & 11.893 &  5.249 & 10.762 &  4.909\\
		\midrule
		\\
		& &\multicolumn{14}{c}{Panel B: $(\alpha, \beta) = (5\%, 10\%)$} \\
		\cmidrule(lr){3-16}
		$\xi$ &   &  1.043 &  1.042 &  1.043 &  1.042 &   &  1.034 &  1.040 &  1.040 &  1.040 &   &  1.040 &  1.049 &  1.040 &  1.042\\
		$\exp(\xi)$ &   &  1.004 &  1.003 &  1.020 &  1.019 &   &  1.050 &  1.065 &  1.016 &  1.016 &   &  1.044 &  1.052 &  1.027 &  1.029\\
		$\log(\xi)$ &   &  1.051 &  1.049 &  1.047 &  1.045 &   &  1.047 &  1.053 &  1.046 &  1.046 &   &  1.043 &  1.052 &  1.042 &  1.044\\
		$F_{\text{Log}}(\xi)$ &   &  1.043 &  1.042 &  1.043 &  1.042 &   &  1.034 &  1.040 &  1.040 &  1.040 &   &  1.040 &  1.049 &  1.040 &  1.042\\
		$F_t(\xi)$  &   &  1.000 &  1.000 &  1.000 &  1.000 &   &  1.009 &  1.010 &  1.009 &  1.010 &   &  1.010 &  1.013 &  1.011 &  1.014\\
		\cmidrule(lr){3-16}
		Eff.~Bound &   &  7.895 &  4.337 &  6.387 &  3.508 &   & 16.453 &  8.585 & 11.536 &  6.145 &   &  7.569 &  3.821 &  6.784 &  3.577\\
		\midrule
		\\
		& &\multicolumn{14}{c}{Panel C: $(\alpha, \beta) =  (25\%, 50\%)$} \\
		\cmidrule(lr){3-16}
		$\xi$ &   & 1.042 & 1.040 & 1.042 & 1.040 &   & 1.046 & 1.044 & 1.044 & 1.042 &   & 1.054 & 1.053 & 1.058 & 1.057\\
		$\exp(\xi)$ &   & 1.080 & 1.076 & 1.206 & 1.187 &   & 1.073 & 1.069 & 1.213 & 1.194 &   & 1.092 & 1.089 & 1.227 & 1.208\\
		$\log(\xi)$ &   & 1.039 & 1.037 & 1.034 & 1.032 &   & 1.043 & 1.041 & 1.035 & 1.034 &   & 1.051 & 1.050 & 1.049 & 1.048\\
		$F_{\text{Log}}(\xi)$ &   & 1.042 & 1.040 & 1.042 & 1.040 &   & 1.046 & 1.044 & 1.044 & 1.042 &   & 1.054 & 1.053 & 1.058 & 1.057\\
		$F_t(\xi)$  &   & 1.000 & 1.000 & 1.000 & 1.000 &   & 1.001 & 1.001 & 1.001 & 1.001 &   & 1.009 & 1.009 & 1.009 & 1.009\\
		\cmidrule(lr){3-16}
		Eff.~Bound &   & 5.091 & 2.797 & 4.683 & 2.573 &   & 6.031 & 3.306 & 5.206 & 2.854 &   & 4.491 & 2.473 & 4.973 & 2.759\\
		\midrule
		\\
		& &\multicolumn{14}{c}{Panel D: $(\alpha, \beta) =  (1\%, 99\%)$} \\
		\cmidrule(lr){3-16}
		$\xi$ &   &  1.042 &  1.041 &  1.042 &  1.041 &   &  1.118 &  1.116 &  1.118 &  1.116 &   &  2.075 &  1.590 &  1.047 &  1.039\\
		$\exp(\xi)$ &   &  1.010 &  1.015 &  2.206 &  1.956 &   &  1.354 &  1.555 &  4.280 &  3.441 &   &  1.488 &  1.285 &  2.006 &  1.824\\
		$\log(\xi)$ &   &  1.061 &  1.058 &  1.022 &  1.021 &   &  1.245 &  1.238 &  1.089 &  1.088 &   &  2.189 &  1.645 &  1.036 &  1.029\\
		$F_{\text{Log}}(\xi)$ &   &  1.042 &  1.040 &  1.042 &  1.041 &   &  1.114 &  1.111 &  1.118 &  1.116 &   &  2.074 &  1.590 &  1.047 &  1.039\\
		$F_t(\xi)$  &   &  1.000 &  1.000 &  1.000 &  1.000 &   &  1.000 &  1.000 &  1.000 &  1.000 &   &  1.000 &  1.000 &  1.000 &  1.000\\
		\cmidrule(lr){3-16}
		Eff.~Bound &   & 13.918 &  7.674 & 13.918 &  7.674 &   & 35.871 & 19.775 & 35.871 & 19.775 &   &  3.799 &  4.523 & 16.316 &  8.576\\
		\midrule
		\\
		& &\multicolumn{14}{c}{Panel E: $(\alpha, \beta) =  (10\%, 99\%)$} \\
		\cmidrule(lr){3-16}
		$\xi$ &   &  1.044 &  1.042 &  1.044 &  1.042 &   &  1.029 &  1.030 &  1.238 &  1.197 &   &  1.163 &  1.141 &  1.039 &  1.036\\
		$\exp(\xi)$ &   &  1.021 &  1.020 &  2.167 &  1.930 &   &  1.003 &  1.003 &  5.754 &  4.199 &   &  1.125 &  1.107 &  2.058 &  1.864\\
		$\log(\xi)$ &   &  1.047 &  1.045 &  1.023 &  1.022 &   &  1.036 &  1.036 &  1.183 &  1.149 &   &  1.167 &  1.145 &  1.024 &  1.021\\
		$F_{\text{Log}}(\xi)$ &   &  1.044 &  1.042 &  1.044 &  1.042 &   &  1.029 &  1.030 &  1.238 &  1.197 &   &  1.163 &  1.141 &  1.039 &  1.036\\
		$F_t(\xi)$  &   &  1.000 &  1.000 &  1.000 &  1.000 &   &  1.000 &  1.000 &  1.000 &  1.000 &   &  1.000 &  1.000 &  1.000 &  1.000\\
		\cmidrule(lr){3-16}
		Eff.~Bound &   &  6.493 &  3.559 & 14.180 &  7.772 &   & 10.978 &  5.874 & 29.809 & 18.299 &   &  3.713 &  2.186 & 16.410 &  8.699\\
		\bottomrule
		\addlinespace
		\multicolumn{16}{p{.97\linewidth}}{\scriptsize 
			This table presents the (approximated) asymptotic standard deviations for semiparametric double quantile models with separated model parameters given in (\ref{eqn:DQR_Models_SepParameters}) in the main article at different probability levels in the horizontal panels.
			The rows titled ``Eff.\ Bound'' report the raw standard deviations whereas the remaining rows report the relative standard deviations compared to the efficiency bound.
			Results for the three residual distributions described in Section \ref{sec:DQRSimStudy} are reported in the three vertical panels of the table.
			We furthermore consider four classical choices of $g_t(\xi)$ together with the (pseudo-)efficient choice $F_t(\xi)$ and the Z-estimation efficiency bound.
		}
	\end{tabularx}
\end{table}

\begin{table}[p]
	\caption{Asymptotic Standard Deviations of Joint Parameter Double Quantile Models}
	\label{tab:DQRSim_Joint_InterceptSDTrue}
	\scriptsize
	\centering
	\begin{tabularx}{\linewidth}{X @{\hspace{1cm}} lrrr @{\hspace{0.5cm}} lrrr @{\hspace{0.5cm}} lrrr} 
		\addlinespace
		\toprule
		& & \multicolumn{3}{c}{ (a) Homoskedastic } && \multicolumn{3}{c}{ (b) Heteroskedastic $t$ }  && \multicolumn{3}{c}{ (c) Heteroskedastic $\mathcal{SN}$ }    \\
		\cmidrule(lr){3-5} \cmidrule(lr){7-9} \cmidrule(lr){11-13} 
		$g_t(\xi)$ & & $\theta_1$ & $\theta_2$ & $\theta_3$ & &  $\theta_1$ & $\theta_2$ & $\theta_3$ & & $\theta_1$ & $\theta_2$ & $\theta_3$  \\
		\midrule
		\\
		& &\multicolumn{11}{c}{Panel A: $(\alpha, \beta) =  (0.5\%, 1\%)$} \\
		\cmidrule(lr){3-13}
		$\xi$ &   &  1.346 &  1.241 &  1.249 &   &  1.726 &  1.464 &  1.523 &   &  1.599 &  1.442 &  1.450\\
		$\exp(\xi)$ &   &  1.105 &  1.056 &  1.070 &   &  1.789 &  2.147 &  2.019 &   &  1.492 &  1.408 &  1.413\\
		$\log(\xi)$ &   &  1.394 &  1.277 &  1.284 &   &  1.870 &  1.594 &  1.633 &   &  1.624 &  1.461 &  1.469\\
		$F_{\text{Log}}(\xi)$ &   &  1.346 &  1.241 &  1.249 &   &  1.724 &  1.462 &  1.522 &   &  1.599 &  1.442 &  1.450\\
		$F_t(\xi)$  &   &  1.008 &  1.005 &  1.006 &   &  1.059 &  1.037 &  1.043 &   &  1.007 &  1.006 &  1.006\\
		\cmidrule(lr){3-13}
		Eff.~Bound &   &  5.216 &  3.903 &  3.570 &   & 30.869 & 17.564 & 16.438 &   &  6.491 &  3.267 &  3.254\\
		\midrule
		\\
		& &\multicolumn{11}{c}{Panel B: $(\alpha, \beta) = (5\%, 10\%)$} \\
		\cmidrule(lr){3-13}
		$\xi$ &   & 1.325 & 1.229 & 1.235 &   & 1.465 & 1.302 & 1.331 &   & 1.463 & 1.315 & 1.322\\
		$\exp(\xi)$ &   & 1.221 & 1.142 & 1.161 &   & 1.328 & 1.180 & 1.226 &   & 1.409 & 1.268 & 1.279\\
		$\log(\xi)$ &   & 1.339 & 1.240 & 1.244 &   & 1.491 & 1.325 & 1.351 &   & 1.471 & 1.321 & 1.328\\
		$F_{\text{Log}}(\xi)$ &   & 1.325 & 1.229 & 1.235 &   & 1.465 & 1.302 & 1.331 &   & 1.463 & 1.315 & 1.322\\
		$F_t(\xi)$  &   & 1.002 & 1.002 & 1.002 &   & 1.041 & 1.027 & 1.032 &   & 1.004 & 1.007 & 1.007\\
		\cmidrule(lr){3-13}
		Eff.~Bound &   & 2.360 & 1.737 & 1.622 &   & 5.417 & 3.637 & 3.318 &   & 3.338 & 1.952 & 1.952\\
		\midrule
		\\
		& &\multicolumn{11}{c}{Panel C: $(\alpha, \beta) =  (25\%, 50\%)$} \\
		\cmidrule(lr){3-13}
		$\xi$ &   & 1.280 & 1.198 & 1.201 &   & 1.335 & 1.228 & 1.233 &   & 1.234 & 1.183 & 1.181\\
		$\exp(\xi)$ &   & 1.555 & 1.360 & 1.414 &   & 1.613 & 1.383 & 1.448 &   & 1.480 & 1.347 & 1.381\\
		$\log(\xi)$ &   & 1.264 & 1.188 & 1.187 &   & 1.319 & 1.219 & 1.220 &   & 1.221 & 1.173 & 1.169\\
		$F_{\text{Log}}(\xi)$ &   & 1.280 & 1.198 & 1.201 &   & 1.335 & 1.228 & 1.233 &   & 1.234 & 1.183 & 1.181\\
		$F_t(\xi)$  &   & 1.000 & 1.000 & 1.000 &   & 1.001 & 1.001 & 1.001 &   & 1.023 & 1.019 & 1.019\\
		\cmidrule(lr){3-13}
		Eff.~Bound &   & 1.630 & 1.172 & 1.141 &   & 1.757 & 1.310 & 1.244 &   & 1.883 & 1.222 & 1.280\\
		\midrule
		\\
		& &\multicolumn{11}{c}{Panel D: $(\alpha, \beta) = (1\%, 99\%)$} \\
		\cmidrule(lr){3-13}
		$\xi$ &   &  1.305 &  1.191 &  1.191 &   &  1.350 &  1.248 &  1.248 &   &  2.277 &  2.088 &  1.217\\
		$\exp(\xi)$ &   &  4.798 &  3.638 &  3.236 &   & 10.343 & 10.625 &  6.131 &   & 14.938 & 11.490 &  4.276\\
		$\log(\xi)$ &   &  1.315 &  1.217 &  1.174 &   &  1.485 &  1.382 &  1.285 &   &  2.305 &  2.143 &  1.202\\
		$F_{\text{Log}}(\xi)$ &   &  1.305 &  1.191 &  1.191 &   &  1.350 &  1.247 &  1.248 &   &  2.277 &  2.088 &  1.217\\
		$F_t(\xi)$  &   &  1.000 &  1.000 &  1.000 &   &  1.000 &  1.000 &  1.000 &   &  1.000 &  1.000 &  1.000\\
		\cmidrule(lr){3-13}
		Eff.~Bound &   &  3.728 &  2.864 &  2.864 &   &  9.913 &  7.560 &  7.560 &   &  1.310 &  1.269 &  2.329\\
		\midrule
		\\
		& &\multicolumn{11}{c}{Panel E: $(\alpha, \beta) = (10\%, 99\%)$} \\
		\cmidrule(lr){3-13}
		$\xi$ &   &  1.272 &  1.202 &  1.108 &   &  2.387 &  1.674 &  1.390 &   &  1.328 &  1.265 &  1.083\\
		$\exp(\xi)$ &   &  5.218 &  3.907 &  2.997 &   & 35.481 & 20.956 &  8.469 &   &  7.024 &  5.280 &  2.772\\
		$\log(\xi)$ &   &  1.281 &  1.211 &  1.096 &   &  2.444 &  1.710 &  1.346 &   &  1.325 &  1.266 &  1.067\\
		$F_{\text{Log}}(\xi)$ &   &  1.272 &  1.202 &  1.108 &   &  2.387 &  1.674 &  1.390 &   &  1.328 &  1.265 &  1.083\\
		$F_t(\xi)$  &   &  1.065 &  1.046 &  1.022 &   &  1.565 &  1.244 &  1.373 &   &  1.014 &  1.011 &  1.002\\
		\cmidrule(lr){3-13}
		Eff.~Bound &   &  2.262 &  1.572 &  2.288 &   &  2.219 &  1.941 &  4.497 &   &  1.433 &  1.016 &  2.282\\
		\bottomrule
		\addlinespace
		\multicolumn{13}{p{.97\linewidth}}{
			This table presents the (approximated) asymptotic standard deviations for semiparametric double quantile models with joint model parameters given in (\ref{eqn:DQR_Models_JointParameters}) in the main article at different probability levels in the horizontal panels.
			The rows titled ``Eff.\ Bound'' report the raw standard deviations whereas the remaining rows report the relative standard deviations compared to the efficiency bound.
			Results for the three residual distributions described in Section \ref{sec:DQRSimStudy} are reported in the three vertical panels of the table.
			We furthermore consider four classical choices of $g_t(\xi)$ together with the (pseudo-) efficient choice $F_t(\xi)$ and the Z-estimation efficiency bound.
		}
	\end{tabularx}
\end{table}

\begin{table}[p]
	\scriptsize
	\caption{Asymptotic Standard Deviations of Quantile and ES Models}
	\label{tab:QESRSimSDTrue}
	\centering
	\begin{tabularx}{\linewidth}{XX @{\hspace{1cm}} lrrrr @{\hspace{0.5cm}} lrrrr}
		\addlinespace
		\toprule
		& & & \multicolumn{4}{c}{(a) Homoskedastic} && \multicolumn{4}{c}{(b) Heteroskedastic}  \\
		\cmidrule(lr){4-7} \cmidrule(lr){9-12} 
		$g_t(\xi_1)$ & $\phi_t(\xi_2)$& & $\theta_1$ & $\theta_2$ & $\theta_3$ & $\theta_4$ & &  $\theta_1$ & $\theta_2$ & $\theta_3$ & $\theta_4$  \\
		\midrule
		\\
		&  & &\multicolumn{9}{c}{Panel A: $\alpha=1\%$ and Models with Separated Parameters} \\
		\cmidrule(lr){3-12}
		$0$ & $\exp(\xi_2)$ &   &   1.289 &   1.496 &   1.140 &   1.232 &   &   4.541 &   9.016 &   2.551 &   4.120\\
		$F_t(\xi_1)$ & $\exp(\xi_2)$ &   &   1.270 &   1.451 &   1.140 &   1.232 &   &   2.619 &   2.759 &   2.551 &   4.120\\
		$0$ & $F_{\text{Log}}(\xi_2)$  &   &   1.278 &   1.477 &   1.127 &   1.213 &   &   4.534 &   8.997 &   2.545 &   4.108\\
		$F_t(\xi_1)$ & $F_{\text{Log}}(\xi_2)$ &   &   1.258 &   1.433 &   1.127 &   1.213 &   &   2.615 &   2.754 &   2.545 &   4.108\\
		$0$ & $-\log(-\xi_2)$ &   &   1.001 &   1.001 &   1.002 &   1.002 &   &   1.280 &   1.316 &   1.243 &   1.194\\
		$F_t(\xi_1)$ & $-\log(-\xi_2)$ &   &   1.001 &   1.001 &   1.002 &   1.002 &   &   1.280 &   1.316 &   1.243 &   1.194\\
		$0$ & $\phi_t^{\text{eff1}}(\xi_2)$  &   &   1.000 &   1.000 &   1.000 &   1.000 &   &   1.287 &   1.324 &   1.242 &   1.193\\
		$F_t(\xi_1)$ & $\phi_t^{\text{eff1}}(\xi_2)$ &   &   1.000 &   1.000 &   1.000 &   1.000 &   &   1.281 &   1.317 &   1.242 &   1.193\\
		$0$ & $\phi_t^{\text{eff2}}(\xi_2)$ &   &   1.000 &   1.000 &   1.000 &   1.000 &   &   1.330 &   1.354 &   1.168 &   1.122\\
		\multicolumn{2}{l}{Barendse Bound} &   &   1.043 &   1.041 &   1.000 &   1.000 &   &   1.209 &   1.238 &   1.168 &   1.122\\
		\cmidrule(lr){3-12}	
		\multicolumn{2}{l}{Efficiency Bound} &   &  13.879 &   7.649 &  17.058 &   9.401 &   &  55.070 &  24.918 & 123.549 &  70.916\\
		\midrule
		\\
		& & & \multicolumn{9}{c}{Panel B: $\alpha=10\%$  and Models with Separated Parameters} \\
		\cmidrule(lr){3-12}
		$0$ & $\exp(\xi_2)$&   &  1.143 &  1.226 &  1.045 &  1.071 &   &  1.513 &  1.801 &  1.397 &  1.514\\
		$F_t(\xi_1)$ & $\exp(\xi_2)$ &   &  1.053 &  1.067 &  1.045 &  1.071 &   &  1.153 &  1.155 &  1.397 &  1.514\\
		$0$ & $F_{\text{Log}}(\xi_2)$  &   &  1.131 &  1.207 &  1.035 &  1.057 &   &  1.499 &  1.777 &  1.382 &  1.490\\
		$F_t(\xi_1)$ & $F_{\text{Log}}(\xi_2)$ &   &  1.046 &  1.059 &  1.035 &  1.057 &   &  1.146 &  1.147 &  1.382 &  1.490\\
		$0$ & $-\log(-\xi_2)$ &   &  1.001 &  1.001 &  1.004 &  1.004 &   &  1.057 &  1.045 &  1.194 &  1.173\\
		$F_t(\xi_1)$ & $-\log(-\xi_2)$ &   &  1.001 &  1.001 &  1.004 &  1.004 &   &  1.055 &  1.043 &  1.194 &  1.173\\
		$0$ & $\phi_t^{\text{eff1}}(\xi_2)$ &   &  1.000 &  1.000 &  1.000 &  1.000 &   &  1.060 &  1.048 &  1.191 &  1.171\\
		$F_t(\xi_1)$ & $\phi_t^{\text{eff1}}(\xi_2)$ &   &  1.000 &  1.000 &  1.000 &  1.000 &   &  1.053 &  1.042 &  1.191 &  1.171\\
		$0$ & $\phi_t^{\text{eff2}}(\xi_2)$ &   &  1.000 &  1.000 &  1.000 &  1.000 &   &  1.104 &  1.085 &  1.117 &  1.098\\
		\multicolumn{2}{l}{Barendse Bound} &   &  1.043 &  1.041 &  1.000 &  1.000 &   &  1.076 &  1.064 &  1.117 &  1.098\\
		\cmidrule(lr){3-12}	
		\multicolumn{2}{l}{Efficiency Bound} &   &  6.353 &  3.500 &  7.157 &  3.943 &   & 12.629 &  6.742 & 20.111 & 11.274\\
		\midrule
		\midrule
		\\
		& & & \multicolumn{9}{c}{Panel C: $\alpha=1\%$ and Models with Joint Parameters} \\
		\cmidrule(lr){3-12}
		$0$ & $\exp(\xi_2)$ &   &  1.082 &  1.179 &  1.124 & &  &  2.571 &  4.824 &  3.019\\
		$F_t(\xi_1)$ & $\exp(\xi_2)$&   &  1.080 &  1.171 &  1.121 &&   &  2.466 &  4.203 &  2.888\\
		$0$ & $F_{\text{Log}}(\xi_2)$  &   &  1.081 &  1.162 &  1.114 & &  &  2.529 &  4.653 &  2.931\\
		$F_t(\xi_1)$ & $F_{\text{Log}}(\xi_2)$ &   &  1.079 &  1.154 &  1.111 &   &&  2.422 &  4.038 &  2.799\\
		$0$ & $-\log(-\xi_2)$ &   &  1.089 &  1.064 &  1.057 &&   &  1.711 &  1.481 &  1.326\\
		$F_t(\xi_1)$ & $-\log(-\xi_2)$ &   &  1.089 &  1.063 &  1.057 & &  &  1.711 &  1.481 &  1.326\\
		$0$ & $\phi_t^{\text{eff1}}(\xi_2)$ &   &  1.052 &  1.038 &  1.033 &  & &  1.793 &  1.612 &  1.401\\
		$F_t(\xi_1)$ & $\phi_t^{\text{eff1}}(\xi_2)$ &   &  1.052 &  1.038 &  1.033 &  & &  1.786 &  1.606 &  1.398\\
		$0$ & $\phi_t^{\text{eff2}}(\xi_2)$ &   &  1.029 &  1.021 &  1.018 &&   &  1.827 &  1.584 &  1.364\\
		\cmidrule(lr){3-12}	
		\multicolumn{2}{l}{Efficiency Bound} &   &  5.153 &  3.542 &  3.786 & &  & 25.453 & 15.221 & 23.534\\
		\midrule
		\\
		& & &\multicolumn{9}{c}{Panel D: $\alpha=10\%$ and Models with Joint Parameters} \\
		\cmidrule(lr){3-12}
		$0$ & $\exp(\xi_2)$ &   & 1.062 & 1.070 & 1.053 & &  & 1.766 & 1.716 & 1.501\\
		$F_t(\xi_1)$ & $\exp(\xi_2)$ &   & 1.033 & 1.029 & 1.032 & &  & 1.547 & 1.390 & 1.359\\
		$0$ & $F_{\text{Log}}(\xi_2)$ &   & 1.070 & 1.068 & 1.055 &  & & 1.764 & 1.688 & 1.485\\
		$F_t(\xi_1)$ & $F_{\text{Log}}(\xi_2)$ &   & 1.035 & 1.028 & 1.030 &  & & 1.527 & 1.366 & 1.338\\
		$0$ & $-\log(-\xi_2)$ &   & 1.124 & 1.088 & 1.079 &&   & 1.641 & 1.375 & 1.343\\
		$F_t(\xi_1)$ & $-\log(-\xi_2)$ &   & 1.102 & 1.073 & 1.066 & &  & 1.597 & 1.349 & 1.323\\
		$0$ & $\phi_t^{\text{eff1}}(\xi_2)$ &   & 1.023 & 1.016 & 1.015 &  & & 1.636 & 1.405 & 1.356\\
		$F_t(\xi_1)$ & $\phi_t^{\text{eff1}}(\xi_2)$ &   & 1.019 & 1.014 & 1.013 & &  & 1.595 & 1.378 & 1.334\\
		$0$ & $\phi_t^{\text{eff2}}(\xi_2)$ &   & 1.009 & 1.006 & 1.006 &&   & 1.667 & 1.408 & 1.331\\
		\cmidrule(lr){3-12}	
		\multicolumn{2}{l}{Efficiency Bound} &   & 2.310 & 1.598 & 1.659 & &  & 4.515 & 3.108 & 3.929\\
		\bottomrule
		\addlinespace
		\multicolumn{12}{p{.97\linewidth}}{\scriptsize
			This table presents the (approximated) asymptotic standard deviations for semiparametric joint quantile and ES models at joint probability level of $1\%$ and $10\%$  for various choices of M-estimators together with the Z-estimation efficiency bound and in Panel A and B, the two-step efficiency bound of \cite{Barendse2020} discussed in Section \ref{sec:TwoStepEfficiency}.
			The rows titled ``Efficiency Bound'' report the raw standard deviations whereas the remaining rows report the relative standard deviations compared to the efficiency bound.
			Panels A and B report results for the models with separated parameters given in 	(\ref{eqn:QESR_Models_SepParameters}) while Panel C and D consider the joint intercept models given in (\ref{eqn:QESR_Models_JointParameters}).
			The two considered residual distributions are presented in the two vertical panels of the table.
		}
	\end{tabularx}
\end{table}

\FloatBarrier

\end{document}